\documentclass[12pt]{article}
\usepackage{preprint-layout}%
\usepackage{preprint-notation}%
\usepackage{doi}
\usepackage{hyperref}
\hypersetup{hidelinks}
\usepackage{extdash}

\DeclareMathOperator{\Div}{Div}
\DeclareMathOperator*{\essinf}{ess\,inf}%

\usepackage{amsmath}
\usepackage{mathtools}
\mathtoolsset{showonlyrefs=false}
\allowdisplaybreaks

\makeatletter
\newcommand{\proofparagraph}[1]{%
  \ifnum\prevgraf=0\relax
    {\bfseries\boldmath{#1}}\hspace{0.6em}%
  \else
    \par\addvspace{.5\baselineskip}%
    \noindent{\bfseries\boldmath{#1}}\hspace{0.6em}%
  \fi
}
\makeatother

\usepackage{booktabs,ctable, multirow}%
\usepackage{siunitx}%
\newcommand{\na}{\num[group-digits = none]}%

\title{Cut Finite Element Methods for Convection-Diffusion in\\ Mixed-Dimensional Domains}
\date{\today}
\author{Erik Burman,
\quad
Peter Hansbo,
\quad
Mats G. Larson,
\\
Karl Larsson,
\quad
Shantiram Mahata
}
\date{\today}

\begin{document}
\maketitle

\begin{abstract}
We develop a cut finite element method (CutFEM) for convection--diffusion problems posed on mixed-dimensional domains, i.e., unions of manifolds of different dimensions arranged in a hierarchical structure where lower-dimensional components form parts of the boundaries of higher-dimensional ones. Such domains arise, for instance, in the modeling of fractured porous media with intersecting fractures.
The model problem is formulated in a compact abstract form using mixed-dimensional directional derivative and divergence operators, which allows the problem to be expressed in a way that closely resembles the classical convection--diffusion equation.
The proposed CutFEM is based on a fixed background mesh that does not conform to the geometry, with each manifold component represented through its associated active mesh. The method employs continuous piecewise linear elements together with weak enforcement of coupling conditions and suitable stabilization.
We prove a priori energy norm error estimates under a global
uniform-diffusion assumption, with corresponding extensions to
solutions of reduced regularity $u\in H^s$, $1\le s<2$, and derive
conditional estimates for the globally pure-convection case.
Partially degenerate configurations, in which diffusion is present
only on selected components, are explored numerically. The experiments
report convergence in both the energy and $L^2$-norms and illustrate
the performance of the method.
\end{abstract}

\section{Introduction}

\paragraph{Mixed-Dimensional Domains.}
Many physical processes occur in media with complex internal structure where lower-dimensional features strongly influence transport. Typical examples include flow and transport in fractured porous media \cite{NoBo17,BoNoYo18}, thin conductive layers in composite materials \cite{MR1417476,10.1093/oso/9780198565543.001.0001}, and networks of channels or interfaces embedded in a higher-dimensional bulk medium \cite{formaggia2009cardiovascular}. In such settings, the geometry naturally consists of objects of different dimensions that interact with one another. Rather than resolving every thin structure in a fully three-dimensional mesh, a common modeling strategy is to represent these features as lower-dimensional manifolds embedded in the surrounding bulk domain and couple the resulting equations through suitable interface conditions.

Such geometries are referred to as \emph{mixed-dimensional domains}. A mixed-dimen\-sional domain in $\mathbb{R}^n$ consists of manifolds of dimension $d=0,\dots,n$ that are connected hierarchically so that lower-dimensional components reside on the boundaries of higher-dimensional ones. For example, in three dimensions the domain may consist of three-dimensional bulk regions, two-dimensional surfaces representing fractures or thin layers, one-dimensional curves representing fracture intersections, and zero-dimensional points corresponding to junctions. Mixed-dimensional formulations have become increasingly important in the modeling of fractured porous media, where fractures and their intersections are represented in reduced dimension rather than explicitly resolving their thickness in the computational mesh. These formulations introduce significant analytical challenges due to coupling across dimensions and the lack of systematic tools for their effective analysis.

\paragraph{New Contributions.}
In this work we develop and analyze a stabilized CutFEM for convection--diffusion problems posed on mixed-dimensional domains. The method provides a unified analytical and numerical framework, together with a stable unfitted finite element discretization and corresponding a priori error analysis. More specifically, the contributions of the paper can be summarized as follows:
\begin{itemize}
\item We establish a unified operator-based framework for mixed-dimensional convection--diffusion problems based on abstract directional derivative and divergence operators, together with mixed-dimensional interface operators that couple neighboring manifolds. The formulation yields a compact representation that closely parallels the classical equation on domains in $\mathbb{R}^n$.

\item We develop a CutFEM discretization in which the mixed-dimensional geometry is embedded in a fixed background mesh. Each manifold component is associated with an active mesh of its own consisting of intersecting elements, eliminating the need for mesh-fitting while enforcing all inter-manifold coupling weakly in a consistent manner. The method provides a unified treatment of all manifolds regardless of their dimension within a single discretization framework.

\item We design a stabilized formulation that combines
Galerkin--least-squares stabilization for the convection operator with
full-gradient stabilization on cut elements. We derive a priori error
estimates in the natural energy norm under a global uniform-diffusion
assumption, with corresponding extensions to solutions with reduced
regularity and conditional estimates for globally pure convection.
Partially degenerate configurations are explored numerically outside
the precise assumptions of the analysis. The experiments report
convergence in both the energy and $L^2$-norms. The method is robust in
convection-dominated regimes and straightforward to implement.
\end{itemize}

\paragraph{Earlier Work.}
A large body of work has been devoted to mixed-dimensional and reduced models, in particular in the context of flow in fractured porous media and related applications. Early approaches model fractures as lower-dimensional interfaces coupled to bulk equations, see, e.g., \cite{MaJaRob05,MR1911534,AnBoHu09}. A functional analytic framework for mixed-dimensional PDEs has been developed in \cite{BoNoVa17,NoBo17}, and numerical discretizations have been studied extensively, including finite volume, virtual element, and discontinuous Galerkin methods, cf.~\cite{BoNoYo18,FumKei17,AnFaRuVe19,MR3038028}. While these works provide powerful modeling and discretization approaches, they do not provide a unified operator-based framework for convection--diffusion problems that facilitates both analysis and discretization across dimensions.

More recently, CutFEMs have emerged as a flexible approach for handling PDEs on embedded and intersecting geometries. For surface partial differential equations, also known as the trace finite element method, the approach was introduced in \cite{OlReGr09} and further developed in many directions, including stabilization techniques \cite{BurHanLar15}, higher-order approximations \cite{Reu15}, and discontinuous Galerkin formulations \cite{BurHanLarMas17}. Applications to transport problems and coupled bulk--surface systems can be found in \cite{OlsReuXu14-b,BuHaLaZa19,GroOlsReu15,BurHanLarZah16}. We also refer to the overview articles \cite{BurClaHanLarMas15,BuHaLaZa25} and the references therein. In the mixed-dimensional setting, CutFEM has been applied to flow and transport problems in fractured domains in \cite{BuHaLa20,BuHaLaLa19}. However, existing CutFEM approaches are primarily developed for single manifolds or bulk--surface couplings and do not directly extend to a unified treatment of general mixed-dimensional geometries with multiple interacting manifolds.

A related class of mixed-dimensional problems arises in $3$d--$1$d coupling, where lower-dimensional structures act as singular sources in the bulk, for instance in tissue perfusion and vascular flow modeling \cite{MR2439847,formaggia2009cardiovascular}. These problems are closely connected to elliptic equations with Dirac measures on lower-dimensional sets, for which finite element methods in weighted Sobolev spaces and corresponding error estimates have been developed in \cite{MR2888310,MR3239767}. While outside the scope of this contribution, they highlight the reduced regularity and need for specialized discretizations.

Despite these advances, a systematic framework for convection--diffusion problems on mixed-dimensional geometries that supports both robust discretization and rigorous error analysis remains largely undeveloped, particularly in the convection-dominated regime. 

\paragraph{Outline.}
The remainder of the paper is organized as follows. In Section~2 we
introduce mixed-dimensional domains, define the mixed-dimensional interface operators and
mixed-dimensional differential operators, and formulate the model
convection--diffusion problem together with a well-posedness result for
positive diffusion under an explicit coercivity condition.
Section~3 presents the CutFEM discretization. Section~4 is devoted to
energy-error estimates for both $H^2$ and lower-regularity solutions
in the globally diffusive and globally pure-convection regimes.
Numerical experiments are presented in Section~5. Finally, Section~6
contains concluding remarks and directions for future work.

\section{The Model Problem}
\label{sec:model-problem}

\subsection{The Domain and Function Spaces}
\label{sec:defs}

\paragraph{Mixed-Dimensional Domain.}
Let $\Omega$ be a domain in $\IR^n$ that admits a partition into manifolds of varying dimensions. We assume that the components are arranged hierarchically so that lower-dimensional manifolds arise as parts of the boundaries of higher-dimensional manifolds. For instance, when $n=3$, the domain may consist of three-dimensional bulk regions whose interior boundaries include two-dimensional embedded surfaces, which in turn may meet along one-dimensional intersection curves that terminate at zero-dimensional intersection points, see Figure~\ref{fig:mixeddim-domain}.

\begin{figure}
\centering
\begin{subfigure}[t]{.49\linewidth}\centering
\includegraphics[trim=100px 40px 100px 30px, clip,width=0.87\linewidth]{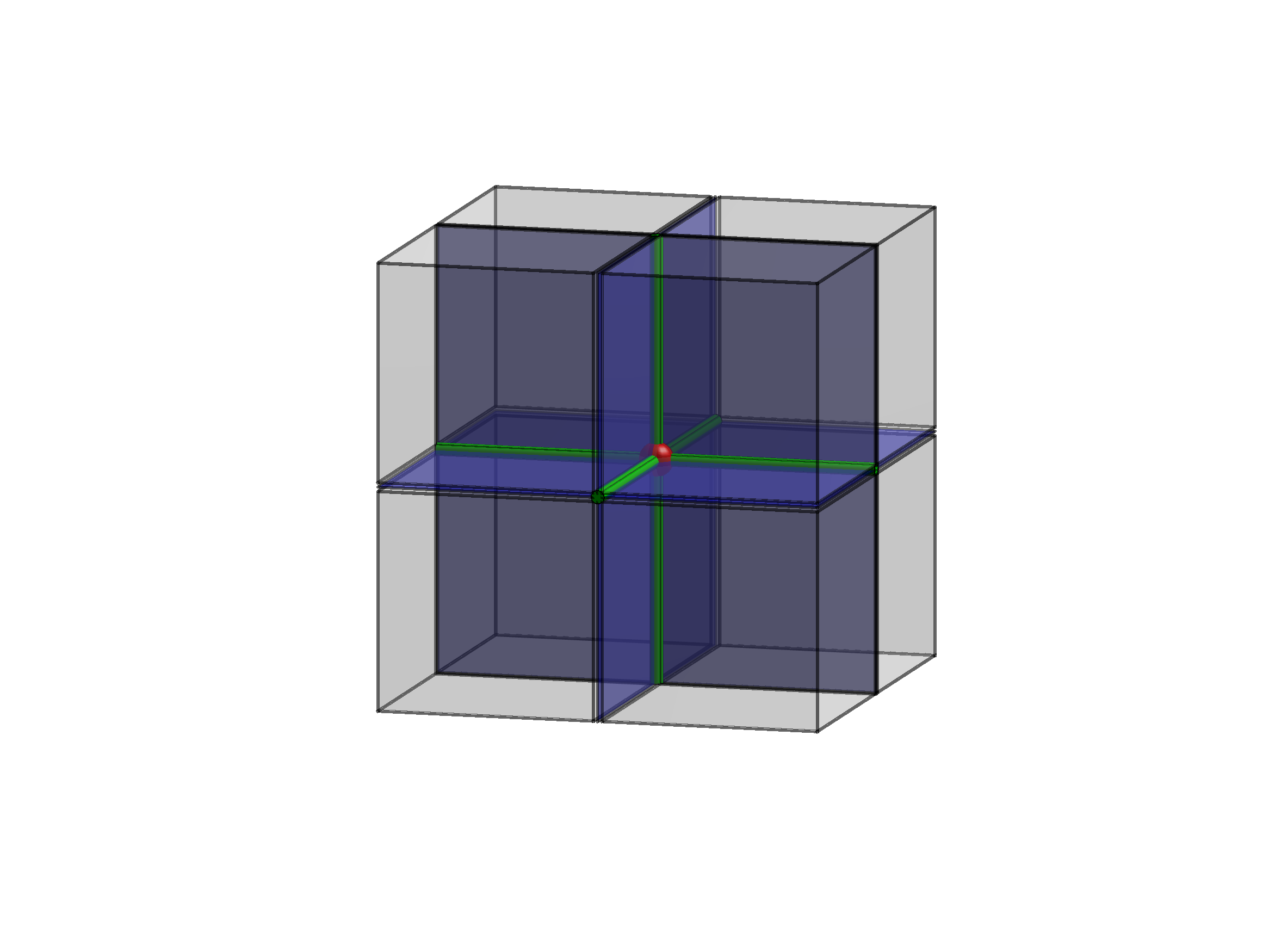}
\caption{Mixed-dimensional domain $\mcO$} \label{fig:domain-ex-a}
\end{subfigure}
\begin{subfigure}[t]{.49\linewidth}\centering
\includegraphics[trim=100px 40px 100px 30px, clip,width=0.87\linewidth]{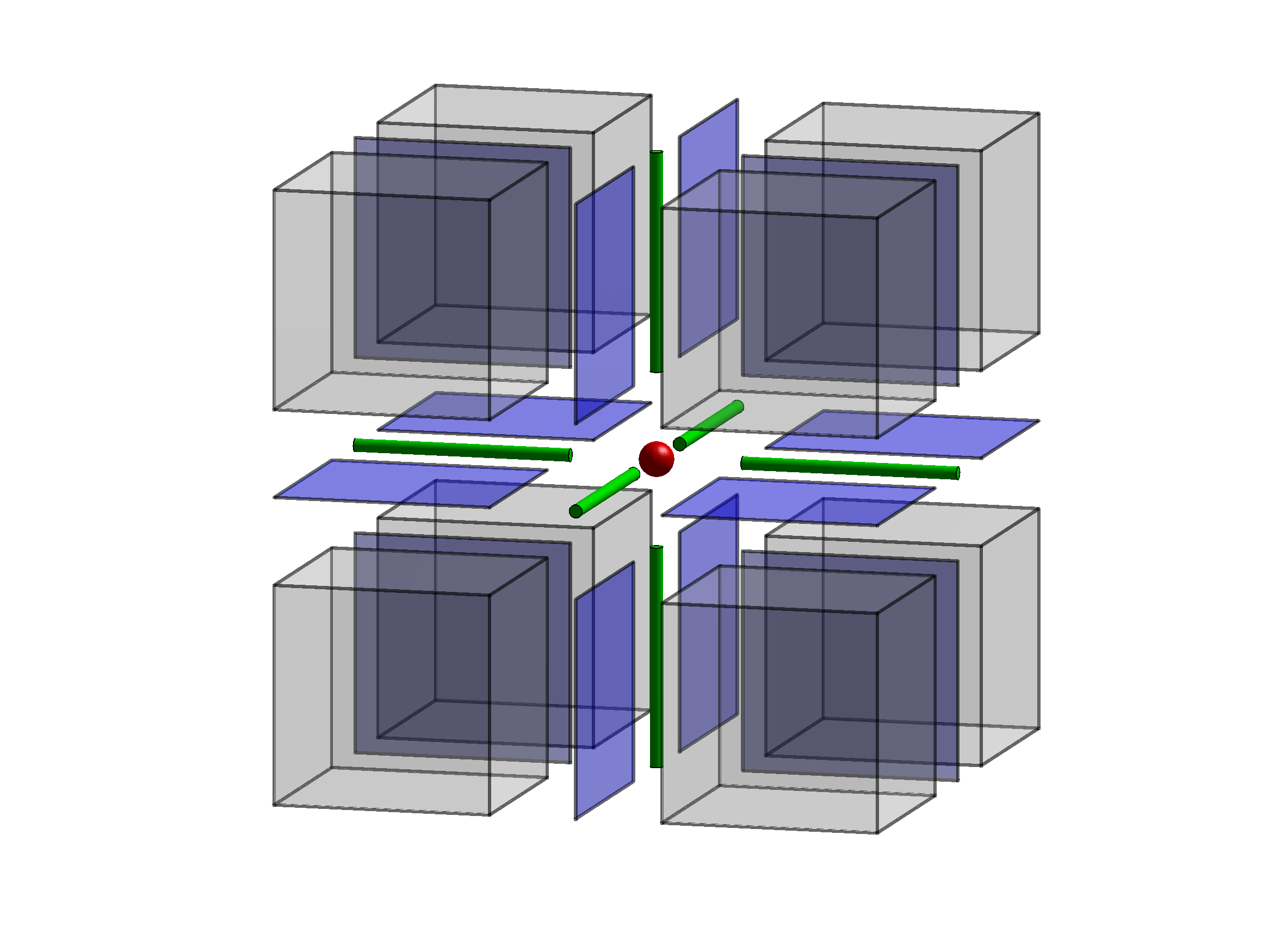}
\caption{Exploded view of $\mcO$} \label{fig:domain-ex-b}
\end{subfigure}
\caption{Illustration of a mixed-dimensional domain $\mcO$ in three dimensions ($n=3$) consisting of $n_3=8$ cubes, $n_2=12$ squares, $n_1=6$ lines, and $n_0=1$ point.}
\label{fig:mixeddim-domain}
\end{figure}

The domain $\Omega$ is partitioned into strata of different dimensions such that
\begin{equation}
\Omega = \bigcup_{d=0}^n \Omega_d
\label{eq:domain-decomposition}
\end{equation}
where $\Omega_d$ consists of all $d$-dimensional components of $\Omega$.
Each $\Omega_d$ is further partitioned as
\begin{equation}
\Omega_d = \bigcup_{i=1}^{n_d} \Omega_{d,i}
\label{eq:domain-partition}
\end{equation}
such that each $\Omega_{d,i}$ is a smooth $d$-dimensional manifold with boundary $\partial\Omega_{d,i}$.
We write
\begin{equation}
\mcO_d := \{\Omega_{d,i}\}_{i=1}^{n_d},
\qquad
\mcO := \{\mcO_d\}_{d=0}^n
\end{equation}
for the collection of $d$-dimensional components and the resulting mixed-dimensional domain, respectively.
The partition satisfies
\begin{equation}
\partial\Omega_{d,i} \subset \partial\Omega \cup \bigcup_{l=0}^{d-1}\Omega_l
\qquad i = 1,\dots,n_d, \quad d = 0,\dots,n
\label{eq:boundary-condition}
\end{equation}

We define the boundary sets associated with this partition
\begin{equation}
\partial\mcO_d = \bigsqcup_{i=1}^{n_d} \partial\Omega_{d,i},
\qquad
\partial\mcO = \bigsqcup_{d=0}^n \partial\mcO_d
\label{eq:boundary-sets}
\end{equation}
where $\sqcup$ denotes disjoint union. See Figure~\ref{fig:normals} for an illustration of the geometry and the associated notation.

\begin{figure}
\centering
\begin{subfigure}[t]{.32\linewidth}\centering
\includegraphics[width=0.87\linewidth]{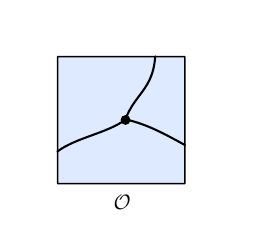}
\caption{Mixed-dimensional domain}
\label{fig:domain-illu-a}
\end{subfigure}
\begin{subfigure}[t]{.32\linewidth}\centering
\includegraphics[width=0.87\linewidth]{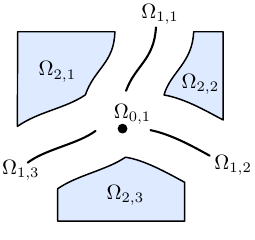}
\caption{Partition notation}
\label{fig:domain-illu-b}
\end{subfigure}
\begin{subfigure}[t]{.32\linewidth}\centering
\includegraphics[width=0.87\linewidth]{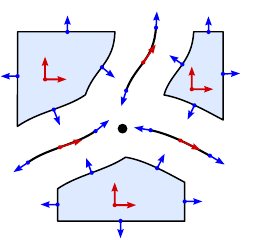}
\caption{Tangential (red) and normal (blue) vector fields}
\label{fig:domain-illu-c}
\end{subfigure}
\caption{Illustration of a mixed-dimensional domain. (a) A domain $\mcO$ where $n=2$, $n_0=1$, $n_1=3$, and $n_2=3$. (b) Notation for the partition of $\mcO$. (c) Tangential vector fields (red) and exterior unit normal fields (blue).}
\label{fig:normals}
\end{figure}

\paragraph{Sobolev Spaces on $\mcO$ and $\partial\mcO$.}
Let $H^s(\Omega_{d,i})$ denote the Sobolev space of order $s$ on the manifold $\Omega_{d,i} \in \mcO_d$ with scalar product $(v,w)_{H^s(\Omega_{d,i})}$.
An element $v \in H^s(\mcO)$ should thus be understood as a collection of component functions $v_{d,i} \in H^s(\Omega_{d,i})$.
Note that we do not impose continuity between different components; instead, their coupling is imposed weakly in our formulation.

We define mixed-dimensional Sobolev spaces by
\begin{equation}
H^s(\mcO_d) = \prod_{i=1}^{n_d} H^s(\Omega_{d,i}),
\qquad
H^s(\mcO) = \prod_{d=0}^n H^s(\mcO_d)
\label{eq:sobolev-spaces}
\end{equation}
with scalar products
\begin{equation}
(v_d,w_d)_{H^s(\mcO_d)} = \sum_{i=1}^{n_d} (v_{d,i},w_{d,i})_{H^s(\Omega_{d,i})},
\qquad
(v,w)_{H^s(\mcO)} = \sum_{d=0}^n (v_d,w_d)_{H^s(\mcO_d)}
\label{eq:sobolev-inner-products}
\end{equation}
The corresponding norms are denoted $\|v_d\|_{H^s(\mcO_d)}$ and $\|v\|_{H^s(\mcO)}$.
For $d=0$ we set $H^s(\Omega_{0,i}) = \IR$ with norm $\|v\|^2_{H^s(\Omega_{0,i})} = v^2$.
For $s=0$ we write $L^2(\mcO_d) = H^0(\mcO_d)$ and $L^2(\mcO) = H^0(\mcO)$ with scalar products $(v_d,w_d)_{\mcO_d}$ and $(v,w)_\mcO$ and norms $\|v_d\|_{\mcO_d}$ and $\|v\|_\mcO$.

On $\partial\mcO$ we define
\begin{equation}
L^2(\partial\mcO) = \prod_{d=1}^n \prod_{i=1}^{n_d} L^2(\partial\Omega_{d,i})
\label{eq:l2-boundary}
\end{equation}
We equip the components in $\partial\mcO_d$ with the natural $(d-1)$-dimensional measure. Components of dimension at most $d-2$ therefore have measure zero, and hence
\begin{equation}
(v,w)_{\partial\mcO_d} = \sum_{i=1}^{n_d} (v,w)_{\partial\Omega_{d,i}} = \sum_{i=1}^{n_d} (v,w)_{\partial\Omega_{d,i}\cap\Omega_{d-1}} + \sum_{i=1}^{n_d} (v,w)_{\partial\Omega_{d,i}\cap\partial\Omega}
\label{eq:boundary-inner-product}
\end{equation}

\paragraph{Tangential and Normal Vector Fields.}
We say that $a=(a_d)_{d=0}^n$ is a tangential vector field on $\mcO$
if $a_d=(a_{d,i})_{i=1}^{n_d}$ and each $a_{d,i}$ is a tangential
vector field on the manifold $\Omega_{d,i}\in\mcO_d$.

We define the unit exterior normal vector field $\nu$ on $\partial\mcO$ by
\begin{equation}
\nu|_{\partial\Omega_{d,i}} = \nu_{d,i}
\label{eq:normal-definition}
\end{equation}
where $\nu_{d,i}$ is the unit vector field tangent to $\Omega_{d,i}$, orthogonal to $\partial\Omega_{d,i}$, and exterior to $\Omega_{d,i}$. See Figure~\ref{fig:domain-illu-c} for an illustration of the tangential and normal vector fields.

The pointwise dot product $a\cdot b$ of two tangential vector fields $a$ and $b$ on $\mcO$ is the scalar field $(a\cdot b)_d = a_d\cdot b_d$ on each $\mcO_d$, $d=0,\dots,n$. Thus the dot product is defined componentwise on the mixed-dimensional domain.

\paragraph{Extension Operator.}
For each smooth component $\Omega_{d,i}$, fix a smooth prolongation
$\widetilde\Omega_{d,i}$ across $\partial\Omega_{d,i}$ and a bounded
intrinsic extension
\begin{equation}
\widetilde E_{d,i}:H^s(\Omega_{d,i})
\longrightarrow H^s(\widetilde\Omega_{d,i})
\end{equation}
Such an intrinsic extension is constructed for surfaces with boundary
in \cite[Section~3.4]{BuHaLaLaMa15}.
For sufficiently small $\delta>0$, set
\(
U_\delta^n(\Omega_{d,i})
:=
\{x\in\mathbb R^n:
\operatorname{dist}(x,\Omega_{d,i})<\delta\}.
\)
After reducing $\delta$ if necessary, let
\(
p_{d,i}:U_\delta^n(\Omega_{d,i})
\longrightarrow\widetilde\Omega_{d,i}
\)
denote the closest-point projection.
We define the
normal extension by
\begin{equation}
E_{d,i}v
:=
(\widetilde E_{d,i}v)\circ p_{d,i}
\in H^s(U_\delta^n(\Omega_{d,i})),
\qquad 0\le s\le2
\label{eq:extension-operator}
\end{equation}
For $d=n$, this construction is understood as an ordinary extension
to a neighborhood, with $p_{n,i}$ equal to the identity; for $d=0$,
the extension is taken to be constant. The operator $E_{d,i}$ is bounded in $H^s$; see \cite{BuHaLaMa16} for the closest-point construction in arbitrary
codimension. We write $v_{d,i}^e=E_{d,i}v_{d,i}$ and let $v^e=Ev$ denote the
componentwise extension of a mixed-dimensional function $v$. When no ambiguity arises, we suppress the superscript $e$ and use $v$
also for its extension. This prescribed extension does
not rely on the existence of a Sobolev trace from the ambient
neighborhood onto $\Omega_{d,i}$.

\paragraph{Tangential Gradient.}
Let $\nabla_d$ denote the tangential gradient on $\Omega_d$. This operator acts componentwise and differentiates only along the tangent directions of each manifold. We define
\begin{equation}
\nabla v = (\nabla_d v_d)_{d=1}^n
\label{eq:tangential-gradient}
\end{equation}
For $d=0$, we set $\nabla_0v_0=0$.
Let $P_{d,i}(x):\IR^n\to T_x\Omega_{d,i}$ denote the orthogonal
projection onto the tangent space. For smooth $v$,
\begin{equation}
\nabla_{d,i}v
=
P_{d,i}\nabla_{\IR^n}v^e
\qquad\text{on }\Omega_{d,i}
\end{equation}
For $v\in H^1(\Omega_{d,i})$, the tangential gradient is defined
intrinsically in the weak sense, and the above identity follows by
density.

These geometric and functional-analytic constructions allow us to define traces and differential operators componentwise on the mixed-dimensional domain. The coupling between neighboring dimensions is then encoded by the mixed-dimensional interface operators introduced in the next subsection.

\subsection{Mixed-Dimensional Interface Operators}
\label{sec:jump-operators}

The mixed-dimensional structure implies that quantities defined on manifolds of different dimensions interact through traces on adjacent boundaries. The following operators encode these interactions between neighboring dimensions.

\paragraph{Upward Trace-Sum Operator.}
For $d=0,\dots,n-1$, the upward trace-sum operator
\begin{equation}
\llbracket \cdot \rrbracket_d : L^2(\partial \mcO_{d+1}) \to L^2(\mcO_d)
\label{eq:upward-jump-space}
\end{equation}
collects traces from $(d+1)$-dimensional manifolds onto adjacent $d$-dimensional manifolds. It is defined by
\begin{align}
\llbracket v_{d+1} \rrbracket_d \big|_{\Omega_{d,i}}
=\displaystyle
\sum_{j=1}^{n_{d+1}} v_{d+1,j}\big|_{\partial \Omega_{d+1,j} \cap \Omega_{d,i}},
\qquad d=0,\dots,n-1
\label{eq:upward-jump-definition}
\end{align}
Components not incident on $\Omega_{d,i}$ are understood to contribute
zero to the sum.
We also write
$\llbracket v_{d+1}\rrbracket_{d,i}
:=\llbracket v_{d+1}\rrbracket_d|_{\Omega_{d,i}}$
for the corresponding component restriction.
When the dimension index is omitted, $\llbracket v\rrbracket$ denotes the mixed-dimensional field defined by
\begin{equation}
\bigl(\llbracket v\rrbracket\bigr)_d
=
\llbracket v_{d+1}\rrbracket_d,
\qquad d=0,\dots,n-1
\end{equation}
with $\bigl(\llbracket v\rrbracket\bigr)_n=0$.
The operator is algebraically a sum of incident traces. When applied to an oriented conormal quantity, such as $\nu_{d+1}\cdot q_{d+1}$, it gives the usual signed flux jump across a two-sided interface and, more generally, the Kirchhoff sum of incident fluxes at a junction. For quantities whose sign is independent of the conormal orientation, it instead simply sums the incident traces. This operator satisfies the transfer identity
\begin{equation}
(v_{d+1},w_d)_{\partial \mcO_{d+1}\cap\mcO_d}
=
(\llbracket v_{d+1} \rrbracket_d , w_d)_{\mcO_d},
\qquad
w_d \in L^2(\mcO_d)
\label{eq:boundary-domain}
\end{equation}
where $w_d$ is copied to each incident boundary component. Any
lower-dimensional factor appearing inside a trace-sum is understood in the
same way.

\paragraph{Downward Jump Operator.}
For $d=1,\dots,n$, the downward jump operator
\begin{equation}
[\cdot]_d : L^2(\partial \mcO_d) \times L^2(\mcO_{d-1}) \to L^2(\partial \mcO_d)
\label{eq:downward-jump-space}
\end{equation}
measures the mismatch between quantities on neighboring dimensions. For each incident pair $\Omega_{d,i}$ and $\Omega_{d-1,j}$, we define
\begin{align}
[v]_{d,i}\big|_{\partial \Omega_{d,i} \cap \Omega_{d-1,j}}
&=
v_{d,i}\big|_{\partial \Omega_{d,i} \cap \Omega_{d-1,j}}
-
v_{d-1,j}\big|_{\partial \Omega_{d,i} \cap \Omega_{d-1,j}}
\label{eq:downward-jump-definition}
\\
[v]_{d,i}\big|_{\partial \Omega_{d,i} \cap \partial \Omega}
&=
v_{d,i}\big|_{\partial \Omega_{d,i} \cap \partial \Omega}
\label{eq:downward-jump-boundary}
\end{align}
For $d=0$, we set $[v]_0=0$.
When the dimension index is omitted, $[v]$ denotes the field on $\partial\mcO$ whose restriction to $\partial\mcO_d$ is $[v]_d$.

Thus the mixed-dimensional interface operators provide the coupling between the different components of the domain. In particular, only neighboring manifolds whose dimensions differ by one interact.

\subsection{Mixed-Dimensional Differential Operators}
\label{sec:mixed-dimensional-operators}

We now introduce mixed-dimensional analogues of the directional derivative, divergence, and elliptic operators. The essential feature of these operators is that they combine tangential differentiation along each manifold with exchange terms describing transport between neighboring manifolds whose dimensions differ by one.

\paragraph{Directional Derivative.}
Let $\beta$ denote the convection field, which we assume to be a smooth tangential vector field on $\mcO$, i.e. $\beta_{d,i}$ is a smooth tangential vector field on each $\Omega_{d,i}\in\mcO_d$. Let $\nu$ denote the unit exterior normal vector field on $\partial\mcO$ defined in Section \ref{sec:defs}. The directional derivative in the direction $\beta$ is defined componentwise by
\begin{align}
( D_{\beta} v )_{d,i}
=
\begin{cases}
\displaystyle
-\llbracket \nu_1\cdot\beta_1[v]_1\rrbracket_{0,i},
& d=0
\\[6pt]
\beta_{d,i}\cdot\nabla_{d,i} v_{d,i}
-\llbracket
\nu_{d+1}\cdot\beta_{d+1}[v]_{d+1}
\rrbracket_{d,i},
& d=1,\dots,n-1
\\[6pt]
\beta_{n,i}\cdot\nabla_{n,i} v_{n,i},
& d=n
\end{cases}
\label{eq:directional-derivative-component}
\end{align}
The first term represents tangential transport along $\Omega_d$, while the remaining terms describe exchange with adjacent $(d+1)$-dimensional manifolds. Using the mixed-dimensional interface operators, this definition can be written compactly as
\begin{equation}
(D_\beta v)_d
=
\beta_d\cdot\nabla_d v_d
-
\llbracket
\nu_{d+1}\cdot\beta_{d+1}[v]_{d+1}
\rrbracket_d
\label{eq:directional-deriv}
\end{equation}

\paragraph{Total Derivative.}
The total derivative is the operator
\begin{equation}
D:H^1(\mcO)\to L^2(\mcO)\times L^2(\partial\mcO)
\label{eq:total-derivative-space}
\end{equation}
defined by
\begin{equation}
D v_d
=
(\nabla_d v_d,-\nu_d[v]_d),
\qquad d=0,\dots,n
\label{eq:total-derivative}
\end{equation}
This operator may be interpreted as a mixed-dimensional gradient consisting of the tangential gradient $\nabla_d v_d$ together with the jump term $[v]_d$ that describes exchange across neighboring manifolds.

\paragraph{Domain--Boundary Product Space.}
Here and below, $L^2(\mcO)$ and $L^2(\partial\mcO)$ also denote the
corresponding $\IR^n$-valued spaces when appropriate. We equip the
codomain $L^2(\mcO)\times L^2(\partial\mcO)$ of $D$ with the inner
product
\begin{equation}
\bigl(v,w\bigr)_{\mcO\times\partial\mcO}
=
(v_\mcO,w_\mcO)_\mcO
+
(v_{\partial\mcO},w_{\partial\mcO})_{\partial\mcO}
\label{eq:product-space-inner-product}
\end{equation}
where
$v=(v_\mcO,v_{\partial\mcO})$ and
$w=(w_\mcO,w_{\partial\mcO})$.
The inner products on the right-hand side are taken componentwise
using the Euclidean inner product in $\IR^n$. When a tangential vector
field $\beta$ on $\mcO$ has a well-defined boundary trace, we identify
it with the pair
$(\beta,\beta|_{\partial\mcO})$ in this product space.

\paragraph{Divergence Operator.}
The divergence of a tangential vector field $\beta$ is defined componentwise by
\begin{align}
(\Div\beta)_{d,i}
=
\begin{cases}
\displaystyle
-\llbracket \nu_1\cdot\beta_1\rrbracket_{0,i},
& d=0
\\[6pt]
\displaystyle
\nabla_{d,i}\cdot\beta_{d,i}
-\llbracket
\nu_{d+1}\cdot\beta_{d+1}
\rrbracket_{d,i},
& d=1,\dots,n-1
\\[6pt]
\nabla_{n,i}\cdot\beta_{n,i},
& d=n
\end{cases}
\label{eq:Div-jump}
\end{align}
Using the mixed-dimensional interface operator, this can be written compactly as
\begin{equation}
(\Div\beta)_d
=
\nabla_d\cdot\beta_d
-
\llbracket
\nu_{d+1}\cdot\beta_{d+1}
\rrbracket_d
\end{equation}
This definition yields the mixed-dimensional analogue of the standard divergence--gradient duality identity
\begin{equation}
(-\Div\beta,v)_\mcO
=
(\beta,Dv)_{\mcO\times\partial\mcO}
\label{eq:div-totalderivative}
\end{equation}

\paragraph{Laplacian.}
We define the mixed-dimensional negative Laplace operator
\begin{equation}
A:H^1(\mcO)\to H^1(\mcO)^*
\label{eq:laplace-operator}
\end{equation}
variationally by
\begin{equation}
\langle Av,w\rangle_{H^1(\mcO)^*,H^1(\mcO)}
=
(Dv,Dw)_{\mcO\times\partial\mcO},
\qquad v,w\in H^1(\mcO)
\label{eq:laplace-weak}
\end{equation}
On its natural operator domain, $A=D^*D$. Since $-\Div$ is the
adjoint of $D$ for compatible tangential fields, we write this relation
formally as $A=-\Div D$. Using \eqref{eq:total-derivative}, the
variational definition reads
\begin{align}
\langle Av,w\rangle_{H^1(\mcO)^*,H^1(\mcO)}
&=
(\nabla v,\nabla w)_\mcO
+
([v],[w])_{\partial\mcO}
\end{align}
Thus \eqref{eq:laplace-weak} has the same form as the standard weak
form of the Laplacian, while also incorporating the mixed-dimensional
interface jumps directly through $D$. A mixed-dimensional
integration-by-parts formula is given later in
Lemma~\ref{lem:partial-integration}.

\paragraph{Elliptic Operator.}
More generally, for a symmetric tangential positive definite tensor
$\alpha$ on each $\Omega_{d,i}$ and $\partial\Omega_{d,i}$, we define
the elliptic operator
\begin{equation}
A_\alpha:H^1(\mcO)\to H^1(\mcO)^*
\label{eq:elliptic-operator-definition}
\end{equation}
by
\begin{align}
\langle A_\alpha v,w\rangle_{H^1(\mcO)^*,H^1(\mcO)}
&=
(\alpha Dv,Dw)_{\mcO\times\partial\mcO}
\\
&=
(\alpha\nabla v,\nabla w)_\mcO
+
(\alpha_{\nu\nu}[v],[w])_{\partial\mcO}
\label{eq:elliptic-form}
\end{align}
where $\alpha_{\nu\nu}=\nu\cdot\alpha\cdot\nu$.
Formally, $A_\alpha=-\Div(\alpha D)$. For sufficiently smooth $v$,
the $L^2(\mcO)$-part of this operator can be written componentwise as
\begin{align}
(A_\alpha v)_d
&=
-\nabla_d\cdot(\alpha_d\nabla_d v_d)
-
\llbracket
\alpha_{d+1,\nu\nu}[v]_{d+1}
\rrbracket_d
\label{eq:elliptic-operator}
\end{align}
while the remaining boundary functional is represented by the conormal
interface and boundary conditions in \eqref{eq:problem-b}--\eqref{eq:problem-c}.
In the diffusive case, we assume that $\alpha$ is uniformly positive definite in the sense that there exist constants $\alpha_0,\alpha_1>0$ such that
\begin{align}
\langle A_\alpha v,v\rangle_{H^1(\mcO)^*,H^1(\mcO)}
&=
(\alpha\nabla v,\nabla v)_\mcO
+
(\alpha_{\nu\nu}[v],[v])_{\partial\mcO}
\\
&\ge
\alpha_0\|\nabla v\|_\mcO^2
+
\alpha_1\|[v]\|_{\partial\mcO}^2
\label{eq:elliptic-positivity}
\end{align}
We define
\begin{align}
\epsilon = \min\{\alpha_0,\alpha_1\}
\label{eq:diffusion-scale}
\end{align}
so that the elliptic operator controls the diffusive terms in the sense that
\begin{align}
\langle A_\alpha v,v\rangle_{H^1(\mcO)^*,H^1(\mcO)}
\ge
\epsilon\big(\|\nabla v\|_\mcO^2+\|[v]\|_{\partial\mcO}^2\big)
\end{align}
In this sense, $\epsilon$ represents the strength of the diffusive part of the operator under the global ellipticity assumption. We also consider separately the globally degenerate case $\alpha\equiv0$.

\subsection{Partial Integration Formula}
\label{sec:partial-integration}

To formulate a partial integration formula for $D_\beta$ we split the boundary of the mixed-dimensional domain into the parts that lie on the exterior boundary $\partial\Omega$ and the parts that lie in the interior of $\Omega$. We therefore define
\begin{equation}
\partial \mcO_B = \partial \mcO \cap \partial \Omega = \bigsqcup_{d,i} (\partial \Omega_{d,i} \cap \partial \Omega),
\qquad
\partial \mcO_I = \partial \mcO \setminus \partial \Omega = \bigsqcup_{d,i} (\partial \Omega_{d,i} \setminus \partial \Omega)
\label{eq:boundary-decomposition}
\end{equation}
Thus $\partial\mcO_I \subset \mcO$, while $\partial\mcO_B \not\subset \mcO$.

Given a tangential vector field $\beta$, we introduce the space
\begin{equation}
V_{\beta} = \{ v \in H^1(\mcO) : \|\beta \cdot \nabla v\|_{\mcO} < \infty \}
\label{eq:V-beta}
\end{equation}

\begin{lem}[Partial Integration]
\label{lem:partial-integration}
For a smooth tangential vector field $\beta$ on $\mcO$ and $v\in V_\beta$,
\begin{equation}
\Div(\beta v) = D_\beta v + (\Div\beta) v
\label{eq:Div-formula}
\end{equation}
and for $v,w\in V_\beta$,
\begin{equation}
(D_\beta v, w)_\mcO = -(v,D_\beta w)_\mcO - ((\Div\beta) v,w)_\mcO + (\nu \cdot \beta [v],[w])_{\partial \mcO_I} + (\nu \cdot \beta v, w)_{\partial \mcO_B}
\label{eq:Dbeta-partial-integration}
\end{equation}
where $\nu$ is the exterior unit normal vector field on $\partial\mcO$.
\end{lem}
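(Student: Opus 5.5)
The plan is to reduce both identities to the classical product rule and divergence theorem on each smooth component $\Omega_{d,i}$. The exchange terms are then handled algebraically: on an interior boundary piece $\partial\Omega_{d+1,j}\cap\Omega_{d,i}$ we decompose $v_{d+1}=[v]_{d+1}+v_d$, and we pass between boundary and lower-dimensional inner products with the transfer identity \eqref{eq:boundary-domain}. Throughout we may assume $v,w$ smooth on each component, and at the end pass to $V_\beta$ by density; see the last paragraph.

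\emph{Step 1: the identity \eqref{eq:Div-formula}.} Fix $d$ and a component $\Omega_{d,i}$.
\begin{itemize}
\item Tangential part: the usual product rule gives $\nabla_{d,i}\cdot(\beta_{d,i}v_{d,i})=\beta_{d,i}\cdot\nabla_{d,i}v_{d,i}+(\nabla_{d,i}\cdot\beta_{d,i})v_{d,i}$.
\item Exchange part for $d\le n-1$: by definition of $\Div$ it is $-\llbracket\nu_{d+1}\cdot\beta_{d+1}v_{d+1}\rrbracket_{d,i}$. On each incident piece $\partial\Omega_{d+1,j}\cap\Omega_{d,i}$ we have $v_{d+1}=[v]_{d+1}+v_d$ by \eqref{eq:downward-jump-definition}.
\item Since $v_d$ is a lower-dimensional factor copied to every incident boundary component, it can be pulled out of the trace-sum. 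This gives
\begin{equation*}
\llbracket\nu_{d+1}\cdot\beta_{d+1}v_{d+1}\rrbracket_d=\llbracket\nu_{d+1}\cdot\beta_{d+1}[v]_{d+1}\rrbracket_d+\llbracket\nu_{d+1}\cdot\beta_{d+1}\rrbracket_d\,v_d .
\end{equation*}
\end{itemize}
Combining these with the definitions \eqref{eq:directional-deriv} and \eqref{eq:Div-jump} yields \eqref{eq:Div-formula}. The cases $d=0$, with no tangential term, and $d=n$, with no exchange term, are special cases of the same computation.

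\emph{Step 2: the identity \eqref{eq:Dbeta-partial-integration}.} Apply the divergence theorem on each $\Omega_{d,i}$ with $d\ge1$ to the field $\beta_{d,i}v_{d,i}w_{d,i}$ and sum over all components. This gives
\begin{equation*}
(\beta\cdot\nabla v,w)_\mcO+(v,\beta\cdot\nabla w)_\mcO+((\nabla\cdot\beta)v,w)_\mcO=(\nu\cdot\beta\,v,w)_{\partial\mcO}.
\end{equation*}
Here $\nabla\cdot\beta$ denotes only the tangential part of $\Div\beta$. We then add the exchange parts of $(D_\beta v,w)_\mcO$, $(v,D_\beta w)_\mcO$ and $((\Div\beta)v,w)_\mcO$, rewriting each as an integral over $\partial\mcO_{d+1}\cap\mcO_d$ via \eqref{eq:boundary-domain}.

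Now fix an interior piece and write $a=v_{d+1}$, $b=v_d$, $c=w_{d+1}$, $e=w_d$. All terms there carry the common factor $\nu\cdot\beta$, and the integrand becomes
\begin{equation*}
ac-(a-b)e-b(c-e)-be=(a-b)(c-e)=[v][w].
\end{equation*}
This produces the term $(\nu\cdot\beta[v],[w])_{\partial\mcO_I}$. On $\partial\mcO_B$ there are no exchange terms, so the term $(\nu\cdot\beta\,v,w)_{\partial\mcO_B}$ survives unchanged. Rearranging gives \eqref{eq:Dbeta-partial-integration}.

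\emph{Main obstacle: the functional setting.} The computation above is elementary bookkeeping; the care lies in the passage to $V_\beta$.
\begin{itemize}
\item Traces of $v_{d+1},w_{d+1}\in H^1(\Omega_{d+1,j})$ on $\partial\Omega_{d+1,j}$ lie in $L^2$, which is enough for the boundary terms.
\item The products $vw$ lie in $W^{1,1}$, which is sufficient for the divergence theorem on each smooth compact manifold with boundary.
\end{itemize}
I would first prove everything for componentwise smooth $v,w$. I would then extend by density of smooth functions in $H^1(\Omega_{d,i})$, using continuity of the trace $H^1\to L^2(\partial\Omega_{d,i})$ and boundedness of the smooth field $\beta$, so that every term is continuous in the $H^1(\mcO)$ norm.
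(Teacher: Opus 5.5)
Your proposal is correct and follows the paper's proof essentially step for step. For \eqref{eq:Div-formula} both use the product rule and pull $v_d$ out of the trace-sum. For \eqref{eq:Dbeta-partial-integration} both use componentwise Green's formula, the transfer identity \eqref{eq:boundary-domain}, and the algebraic identity $v_{d+1}w_{d+1}=[v][w]+[v]w_d+v_d[w]+v_dw_d$, which is your $(a-b)(c-e)$ computation. The paper leaves the density argument in $H^1(\mcO)$ implicit; your version of it is sound, since $V_\beta=H^1(\mcO)$ for smooth $\beta$.
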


\begin{proof}
We first prove \eqref{eq:Div-formula}. Using the mixed-dimensional interface operators and the product rule on each component, we obtain
\begin{align}
\Div(\beta v)
&=
\nabla \cdot (\beta v) - \llbracket \nu \cdot \beta v \rrbracket
\\
&=
(\nabla \cdot \beta) v + \beta \cdot \nabla v - \llbracket \nu \cdot \beta [v] \rrbracket - \llbracket \nu \cdot \beta \rrbracket v
\\
&=
(\Div\beta) v + D_\beta v
\end{align}
where we used the identity
\begin{align}
-\llbracket \nu \cdot \beta v \rrbracket_d
&=
-\llbracket \nu_{d+1} \cdot \beta_{d+1} v_{d+1} \rrbracket_d
\\
&=
\llbracket \nu_{d+1} \cdot \beta_{d+1} (v_d - v_{d+1}) \rrbracket_d - \llbracket \nu_{d+1} \cdot \beta_{d+1} \rrbracket_d v_d
\\
&=
-\llbracket \nu_{d+1} \cdot \beta_{d+1} [v]_{d+1} \rrbracket_d - \llbracket \nu_{d+1} \cdot \beta_{d+1} \rrbracket_d v_d
\end{align}
We next prove \eqref{eq:Dbeta-partial-integration}. We apply Green's formula componentwise, split the resulting boundary contributions into their internal and external parts, and use the transfer identity \eqref{eq:boundary-domain} on the internal interfaces. This gives
\begin{align}
(D_\beta v,w)_\mcO
&=
\sum_{d=0}^n (\beta_d \cdot \nabla_d v_d,w_d)_{\mcO_d} - \sum_{d=0}^{n-1} (\llbracket \nu_{d+1}\cdot \beta_{d+1} [v]_{d+1}\rrbracket_d, w_d)_{\mcO_d}
\\
&=
\sum_{d=0}^n \Big(
-(v_d,\beta_d \cdot \nabla_d w_d)_{\mcO_d} - ((\nabla_d \cdot \beta_d) v_d, w_d)_{\mcO_d} + (\nu_d \cdot \beta_d v_d,w_d)_{\partial \mcO_d}
\Big)
\\
&\qquad - \sum_{d=0}^{n-1} (\llbracket \nu_{d+1}\cdot \beta_{d+1} [v]_{d+1}\rrbracket_d, w_d)_{\mcO_d} \notag
\\
&=
\sum_{d=0}^n \Big(
-(v_d,\beta_d \cdot \nabla_d w_d)_{\mcO_d} - ((\nabla_d \cdot \beta_d) v_d, w_d)_{\mcO_d} + (\nu_d \cdot \beta_d v_d,w_d)_{\partial \mcO_d\cap \partial \Omega}
\Big)
\\
&\qquad + \sum_{d=0}^{n-1}\Big(
(\nu_{d+1} \cdot \beta_{d+1} v_{d+1},w_{d+1})_{\partial \mcO_{d+1} \cap \mcO_d}
- (\llbracket \nu_{d+1}\cdot \beta_{d+1} [v]_{d+1}\rrbracket_d, w_d)_{\mcO_d}
\Big) \notag
\\
&=
\sum_{d=0}^n \Big(
-(v_d,\beta_d \cdot \nabla_d w_d)_{\mcO_d} - ((\nabla_d \cdot \beta_d) v_d, w_d)_{\mcO_d} + (\nu_d \cdot \beta_d v_d,w_d)_{\partial \mcO_d\cap \partial \Omega}
\Big)
\\
&\qquad + \sum_{d=0}^{n-1}
(\llbracket \nu_{d+1} \cdot \beta_{d+1} [w]_{d+1}\rrbracket_d, v_d)_{\mcO_d}
+ \sum_{d=0}^{n-1}
(\nu_{d+1}\cdot \beta_{d+1} [v]_{d+1}, [w]_{d+1})_{\partial\mcO_{d+1}\cap\mcO_d} \notag
\\
&\qquad + \sum_{d=0}^{n-1} (\llbracket \nu_{d+1}\cdot \beta_{d+1} \rrbracket_d v_d, w_d)_{\mcO_d} \notag
\\
&=
-(v, D_\beta w)_\mcO - ((\Div\beta) v,w)_\mcO + (\nu \cdot \beta [v],[w])_{\partial \mcO_I} + (\nu \cdot \beta v,w)_{\partial \mcO_B}
\end{align}
Here we used the algebraic identity
\begin{align}
v_{d+1} w_{d+1}
&=
(v_{d+1} - v_d) w_{d+1} + v_d(w_{d+1} - w_d) + v_d w_d
\\
&=
[v]_{d+1} [w]_{d+1} + [v]_{d+1} w_d + v_d [w]_{d+1} + v_d w_d
\end{align}
This completes the proof.
\end{proof}

\subsection{The Mixed-Dimensional Model Problem}
\label{sec:mixed-dimensional-model-problem}

We now formulate the convection-diffusion problem first in componentwise form and then rewrite it in terms of the mixed-dimensional operators introduced above. The abstract formulation makes the coupling structure transparent and prepares for the variational analysis in the next subsection.

\paragraph{Componentwise Formulation.}

Find $u_{d,i}:\Omega_{d,i} \rightarrow \IR$ such that, on each component $\Omega_{d,i}$, the unknown satisfies a tangential convection-diffusion equation coupled to adjacent higher-dimensional components through flux exchange terms,
\begin{alignat}{3}\label{eq:strong-form-bulk}
\nabla_{d,i}\cdot (- \alpha_{d,i} \nabla_{d,i}u_{d,i} + \beta_{d,i} u_{d,i} )  + \kappa_{d,i} u_{d,i} \qquad\quad
\nonumber\\
 - \llbracket \nu_{d+1} \cdot ( - \alpha_{d+1} \nabla_{d+1}u_{d+1} + \beta_{d+1} u_{d+1})\rrbracket_{d,i} 
& = f_{d,i}& \qquad &\text{in $\Omega_{d,i}$}
\\ \label{eq:strong-form-inflow-internal}
\nu_{d,i} \cdot \alpha_{d,i} \nabla_{d,i}u_{d,i} + (\alpha_{d,ii} + |(\nu_{d,i} \cdot \beta_{d,i})_-|) [ u ]_{d,i} & = 0 & \qquad &\text{on $\partial \Omega_{d,i} \setminus \partial \Omega$}
\\ \label{eq:strong-form-inflow-boundary}
\nu_{d,i} \cdot \alpha_{d,i} \nabla_{d,i}u_{d,i} + (\alpha_{d,ii} + |(\nu_{d,i} \cdot \beta_{d,i})_-|)  (u_{d,i} - g_{d,i}) &=0 & \qquad  &\text{on 
$ \partial \Omega_{d,i} \cap \partial \Omega$}
\end{alignat}
where $\alpha_{d, ii} = \nu_{d,i} \cdot \alpha_{d, i} \cdot \nu_{d, i}$ and, for any scalar $v$, we use
$|v_+|=\max(v,0)$ and $|v_-|=-\min(v,0)$ for the absolute values of its positive and negative parts, respectively.

\paragraph{Abstract Formulation.}

Using the definitions of the elliptic operator \eqref{eq:elliptic-operator}, the directional derivative \eqref{eq:directional-deriv}, and the divergence \eqref{eq:Div-jump}, together with the interface condition \eqref{eq:strong-form-inflow-internal}, we can rewrite the left-hand side of \eqref{eq:strong-form-bulk} more compactly as
\begin{align}
&\nabla_{d,i}\cdot ( - \alpha_{d,i} \nabla_{d,i}u_{d,i} + \beta_{d,i} u_{d,i} ) + \kappa_{d,i} u_{d,i} \notag
\\&\qquad \notag
- \llbracket \nu_{d+1} \cdot ( -\alpha_{d+1} \nabla_{d+1}u_{d+1} + \beta_{d+1} u_{d+1})\rrbracket_{d,i}
\\
&\qquad\qquad = (A_\alpha u)_{d,i} + \beta_{d,i}\cdot\nabla_{d,i}u_{d,i} - \llbracket \nu_{d+1}\cdot\beta_{d+1}[u]_{d+1}\rrbracket_{d,i}
\\
&\qquad\qquad \quad + \big( \kappa_{d,i} + \nabla_{d,i}\cdot\beta_{d,i} - \llbracket \nu_{d+1}\cdot\beta_{d+1}\rrbracket_{d,i} \big)u_{d,i} - \llbracket |(\nu_{d+1}\cdot\beta_{d+1})_-| [u]_{d+1} \rrbracket_{d,i}
\notag
\\
&\qquad\qquad = (A_\alpha u + D_\beta u + (\kappa + \Div\beta)u - \llbracket |(\beta_\nu)_-| [u]\rrbracket)_{d,i}
\label{eq:abstract-rewrite}
\end{align}
We introduce the shorthand notations $\beta_\nu := \nu \cdot \beta$ for the normal component of the convection field, 
$B_{\alpha}^{\beta_\pm} := \alpha_{\nu\nu} + |(\beta_\nu)_{\pm}|$ and
$B_{\alpha}^{\beta} := \alpha_{\nu\nu} + |\beta_\nu|$.
The mixed-dimensional model problem corresponding to \eqref{eq:strong-form-bulk}--\eqref{eq:strong-form-inflow-boundary} is: find $u:\mcO \rightarrow \IR$ such that
\begin{alignat}{3}\label{eq:problem-a}
A_\alpha u +
D_\beta u + \gamma u - \llbracket |(\beta_\nu)_-| [ u ]\rrbracket & = f   &   \qquad &\text{in $\mcO$}
\\ \label{eq:problem-b}
\nu \cdot \alpha\nabla u + B_{\alpha}^{\beta_-} [ u ] &= 0 & \qquad &\text{on $\partial \mcO_I$}
\\ \label{eq:problem-c}
\nu \cdot \alpha\nabla u + B_{\alpha}^{\beta_-} ( u - g) &= 0 & \qquad &\text{on $\partial \mcO_B$}
\end{alignat}
where $\gamma = \kappa + \Div \beta$, or equivalently in component form
\begin{equation}\label{eq:gamma-component}
\gamma_d = \kappa_d + \nabla_d \cdot \beta_d -  \llbracket \nu_{d+1} \cdot \beta_{d+1}   \rrbracket_{d}
\end{equation}

\begin{rem}[Darcy's Law and Mass Conservation]
Convection-diffusion equations model conservation of scalar quantities, and in porous-media applications the flux is often interpreted through Darcy's law together with mass conservation. In the present mixed-dimensional setting we define $\sigma_{d,i} = - \alpha_{d,i} \nabla_{d,i}u_{d,i} + \beta_{d,i} u_{d,i}$ and $\kappa_{d,i} = 0$. Then \eqref{eq:strong-form-bulk} reduces to 
\begin{alignat}{2}
     &\sigma_{d,i}  = - \alpha_{d,i} \nabla_{d,i}u_{d,i} + \beta_{d,i} u_{d,i}   & \qquad & \text{ in } \Omega_{d,i} \label{eq:Darcylaw}\\
     &\nabla_{d,i}\cdot \sigma_{d,i} -  \llbracket \nu_{d+1} \cdot \sigma_{d+1}\rrbracket_{d,i}  = f_{d,i}  & \qquad & \text{ in } \Omega_{d,i} \label{eq:conservation_mass}
\end{alignat}
Equation \eqref{eq:Darcylaw} represents Darcy's law on $\Omega_{d,i}$, with diffusive flux $- \alpha_{d,i} \nabla_{d,i}u_{d,i}$ and convective flux $\beta_{d,i} u_{d,i}$, whereas \eqref{eq:conservation_mass} expresses conservation of mass on $\Omega_{d,i}$. In particular, when $d=n$ there are no higher-dimensional neighbors, and
\eqref{eq:conservation_mass} reduces to
\(\nabla_{n,i}\cdot \sigma_{n,i} = f_{n,i}\) in $\Omega_{n,i}$.
\end{rem}

\begin{rem}[Pure Diffusion Interface Conditions]
The interface conditions used for pure diffusive flow in fractured porous media are typically of Robin type, as they allow for a range of transfer regimes across the fracture, cf. \cite{BuHaLa20, MaJaRob05, AnBoHu09}.
To relate our interface condition \eqref{eq:strong-form-inflow-internal}, or equivalently \eqref{eq:problem-b}, to those used in \cite[Eqns.~(23)--(24)]{BuHaLa20}, we consider a simple geometry in which $\Omega=[0,1]^2$ is split by a fracture along $x=1/2$ into two bulk domains $\Omega_{2,1}$ and $\Omega_{2,2}$ and the fracture $\Omega_{1,1}$. The governing bulk and fracture equations then agree with \cite[Eqns.~(1)--(2)]{BuHaLa20}. With $[u]_{2,1} = u_{2,1} - u_{1,1}$ and $[u]_{2,2} = u_{2,2} - u_{1,1}$ on the fracture, the interface conditions take the form
\begin{alignat}{2}
    \nu_{2,1} \cdot \alpha_{2,1} \nabla_{2,1}u_{2,1} + \nu_{2,1} \cdot \alpha_{2,1}\nu_{2,1}( u_{2,1} - u_{1,1}) & = 0 & \qquad &\text{on $\Omega_{1,1}$}\label{eq:bulk1-unitsq}
    \\
    \nu_{2,2} \cdot \alpha_{2,2} \nabla_{2,2}u_{2,2} + \nu_{2,2} \cdot \alpha_{2,2}\nu_{2,2}( u_{2,2} - u_{1,1}) & = 0 & \qquad &\text{on $\Omega_{1,1}$}\label{eq:bulk2-unitsq}
\end{alignat}
Assume $\Theta = \nu_{2,1} \cdot \alpha_{2,1}\nu_{2,1} = \nu_{2,2} \cdot \alpha_{2,2}\nu_{2,2} = \bfc_{\Omega_{1,1}}/t$, where $\bfc_{\Omega_{1,1}}$ is the permeability coefficient across $\Omega_{1,1}$ and $t$ is the thickness of $\Omega_{1,1}$. Define the upward trace-sum of the conormal fluxes, the scalar bulk jump $J_u$, and the averages across $\Omega_{1,1}$ by
\begin{alignat}{2}
    &\llbracket \nu\cdot \alpha\nabla u \rrbracket_{1,1} = \nu_{2,1} \cdot \alpha_{2,1}\nabla_{2,1} u_{2,1} + \nu_{2,2} \cdot \alpha_{2,2}\nabla_{2,2} u_{2,2}, \quad   &&J_u = u_{2,1} - u_{2,2}
    \\
    &\langle \nu\cdot \alpha\nabla u \rangle = \frac{1}{2}( \nu_{2,1} \cdot \alpha_{2,1} \nabla_{2,1}u_{2,1} - \nu_{2,2} \cdot \alpha_{2,2} \nabla_{2,2}u_{2,2} ),  \quad &&\langle u \rangle = \frac{1}{2}( u_{2,1} + u_{2,2})
\end{alignat}
Adding and subtracting \eqref{eq:bulk1-unitsq}--\eqref{eq:bulk2-unitsq} yields
\begin{align}
    &\llbracket \nu\cdot \alpha\nabla u \rrbracket_{1,1} + 2\Theta ( \langle u \rangle - u_{1,1}) = 0 ,
    \qquad
    \langle \nu\cdot \alpha\nabla u \rangle + \frac{\Theta}{2} J_u = 0
\end{align}
These relations are analogous to the interface conditions used in \cite[Eqns.~(5a)--(5b)]{AnFaRuVe19} and \cite[Eqns.~(23)--(24)]{BuHaLa20} for the particular choice $\xi=1$.
\end{rem}

\subsection{Well-posedness}
\label{sec:well-posedness}

In the estimates below, we write $a\lesssim b$ if $a\le Cb$, where $C>0$ is independent of the mesh size $h$ and the diffusion scale $\epsilon$, but may depend on the fixed mixed-dimensional geometry, the fixed coefficient-comparison and stabilization constants, and $c_0$ when applicable. The symbols $\gtrsim$ and $\sim$ are defined analogously, with $a\sim b$ meaning that both $a\lesssim b$ and $a\gtrsim b$ hold.

\paragraph{Weak Formulation.}
We introduce the energy space
\begin{equation}\label{eq:V-alpha-beta}
V_{\alpha,\beta}
=
\{ v \in H^1(\mcO) : \|v\|_{\alpha,\beta} < \infty \}
\end{equation}
equipped with the norm
\begin{align}\label{eq:continuous-norm}
\| v\|_{\alpha,\beta}^2
=
\epsilon ( \|\nabla v\|_{\mcO}^2 + \|[v]\|_{\partial \mcO}^2 )
+ \|v\|_{\mcO}^2
+ \||\beta_\nu|^{1/2}[v]\|_{\partial \mcO_I}^2
+ \||\beta_\nu|^{1/2} v\|_{\partial \mcO_B}^2
\end{align}
where the parameter $\epsilon$ denotes the diffusion scale defined in \eqref{eq:diffusion-scale} in the diffusive case, while $\epsilon=0$ corresponds to the degenerate pure-convection case. The $\epsilon$-weighted terms in the norm provide a uniform scalar control of the gradient and jump contributions, which is convenient in the continuity estimate for the convection operator. Although the elliptic part is naturally expressed in terms of $\alpha$, the above norm is chosen to balance the diffusive and convective contributions in a form suitable for convection-dominated regimes.
This norm combines the diffusive bulk contribution, the interface jumps, and the inflow-weighted boundary terms induced by the convection field.
For $\epsilon>0$, it is equivalent to the product $H^1(\mcO)$-norm,
and $V_{\alpha,\beta}$ is therefore a Hilbert space.

Using \eqref{eq:elliptic-operator}, integration by parts on each component, and the boundary conditions \eqref{eq:problem-b}--\eqref{eq:problem-c}, we arrive at the weak formulation of \eqref{eq:problem-a}--\eqref{eq:problem-c}: find $u \in V_{\alpha,\beta}$ such that
\begin{equation}\label{eq:weak-problem}
a(u,v)  = l(v) \qquad \forall v \in V_{\alpha,\beta}
\end{equation}
where the forms are defined by
\begin{align}
a(v,w) &= (\alpha \nabla v,\nabla w)_\mcO
+ (D_\beta v,w)_\mcO + (\gamma v,w)_\mcO 
\label{eq:continuousbilinear} 
\\&\quad \nonumber
+ ( B_{\alpha}^{\beta_-} [ v ],[w])_{\partial \mcO_I}
+ ( B_{\alpha}^{\beta_-} v , w)_{\partial \mcO_B}
\\
l(w) &= (f,w)_\mcO +  ( B_{\alpha}^{\beta_-} g, w)_{\partial \mcO_B}
\label{eq:continuouslinear}
\end{align}
Using Lemma \ref{lem:partial-integration} we obtain the following stability 
result. 

\begin{lem}\label{lem:coercivity}{\bf (Coercivity)}
If there is a constant $c_0>0$ such that 
\begin{equation}\label{eq:assumption-coeff}
c_0 \leq  \essinf_{x\in \mcO}(2 \kappa + \Div \beta)
\end{equation}
then 
\begin{equation}\label{eq:coercivity}
a(v,v)
\ge
\frac{1}{2}\min\{1,c_0\}\|v\|_{\alpha,\beta}^2
\end{equation}
for all $v\in V_{\alpha,\beta}$.
\end{lem}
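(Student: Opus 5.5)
Set $w=v$ in the bilinear form \eqref{eq:continuousbilinear} and use Lemma~\ref{lem:partial-integration} to symmetrize the convection term. Taking $w=v$ in \eqref{eq:Dbeta-partial-integration} gives
\begin{equation*}
(D_\beta v,v)_\mcO = -\tfrac12((\Div\beta)v,v)_\mcO + \tfrac12(\beta_\nu[v],[v])_{\partial\mcO_I} + \tfrac12(\beta_\nu v,v)_{\partial\mcO_B}
\end{equation*}
Since $\gamma=\kappa+\Div\beta$, the zeroth order terms combine into $\tfrac12((2\kappa+\Div\beta)v,v)_\mcO$. By \eqref{eq:assumption-coeff} this is at least $\tfrac{c_0}{2}\|v\|_\mcO^2$.

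Next handle the boundary terms. On $\partial\mcO_I$ we have
\begin{equation*}
\tfrac12(\beta_\nu[v],[v]) + (B_\alpha^{\beta_-}[v],[v]) = (\alpha_{\nu\nu}[v],[v]) + \bigl((\tfrac12\beta_\nu+|(\beta_\nu)_-|)[v],[v]\bigr)
\end{equation*}
Write $\beta_\nu=|(\beta_\nu)_+|-|(\beta_\nu)_-|$. Then $\tfrac12\beta_\nu+|(\beta_\nu)_-| = \tfrac12|(\beta_\nu)_+|+\tfrac12|(\beta_\nu)_-| = \tfrac12|\beta_\nu|$, which is the key pointwise identity. The same computation on $\partial\mcO_B$ gives $(\alpha_{\nu\nu}v,v)_{\partial\mcO_B}+\tfrac12\||\beta_\nu|^{1/2}v\|^2_{\partial\mcO_B}$.

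Collecting terms, we obtain
\begin{equation*}
a(v,v) = (\alpha\nabla v,\nabla v)_\mcO + (\alpha_{\nu\nu}[v],[v])_{\partial\mcO} + \tfrac12\|(2\kappa+\Div\beta)^{1/2}v\|^2_\mcO + \tfrac12\||\beta_\nu|^{1/2}[v]\|^2_{\partial\mcO_I}+\tfrac12\||\beta_\nu|^{1/2}v\|^2_{\partial\mcO_B}
\end{equation*}
Here $[v]=v$ on $\partial\mcO_B$ by \eqref{eq:downward-jump-boundary}, so the $\alpha_{\nu\nu}$ terms on $\partial\mcO_I$ and $\partial\mcO_B$ combine into $(\alpha_{\nu\nu}[v],[v])_{\partial\mcO}$. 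This sum is exactly the left-hand side of \eqref{eq:elliptic-positivity}. Hence it is bounded below by $\epsilon(\|\nabla v\|^2+\|[v]\|^2_{\partial\mcO})$, which is at least $\tfrac12\epsilon(\cdots)$; in the case $\epsilon=0$ the term is simply nonnegative.

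Finally, each group of terms dominates $\tfrac12\min\{1,c_0\}$ times the corresponding part of $\|v\|^2_{\alpha,\beta}$, which yields \eqref{eq:coercivity}.

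The only delicate step is the boundary bookkeeping: checking that the $\tfrac12\beta_\nu$ from integration by parts and the $|(\beta_\nu)_-|$ upwind term combine to $\tfrac12|\beta_\nu|$, with the correct sign conventions on both $\partial\mcO_I$ and $\partial\mcO_B$. A secondary point is justifying the use of Lemma~\ref{lem:partial-integration} for $v\in V_{\alpha,\beta}$. I would prove the identity for smooth $v$ and extend by density, which is valid since $\beta$ is smooth and $V_{\alpha,\beta}\subset H^1(\mcO)$ with traces controlled.
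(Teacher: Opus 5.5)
Your proposal is correct and follows essentially the same route as the paper: test with $v$, symmetrize $(D_\beta v,v)$ via Lemma~\ref{lem:partial-integration}, combine $\tfrac12\beta_\nu$ with the upwind weight $|(\beta_\nu)_-|$ into $\tfrac12|\beta_\nu|$, merge the $\alpha$-terms into $\langle A_\alpha v,v\rangle$, and then apply \eqref{eq:elliptic-positivity} and \eqref{eq:assumption-coeff}; the paper only differs cosmetically by working with $2a(v,v)$ instead of $a(v,v)$. The density argument you mention is unnecessary, since for smooth $\beta$ one has $V_{\alpha,\beta}\subset H^1(\mcO)=V_\beta$, so the partial integration lemma applies directly.
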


\begin{proof}
To prove \eqref{eq:coercivity}, we first note from \eqref{eq:Dbeta-partial-integration} that 
\begin{equation}\label{eq:partial-integration-identity}
2(D_\beta v,v)_\mcO 
= 
-((\Div \beta) v, v)_\mcO 
+(\beta_\nu [v],[v])_{\partial \mcO_I}
+ (\beta_\nu v, v)_{\partial \mcO_B}
\end{equation}
Now, using \eqref{eq:partial-integration-identity}, the definition $\gamma = \kappa + \Div \beta$, which gives $2\gamma-\Div\beta=2\kappa+\Div\beta$, and the decomposition $\beta_\nu = |(\beta_\nu)_+| - |(\beta_\nu)_-|$, we obtain
\begin{align}
2a(v,v) &= 2(\alpha \nabla v,\nabla v)_\mcO+2(D_\beta v,v)_\mcO + 2(\gamma v, v)_\mcO
+ 2(B_{\alpha}^{\beta_-} [ v ],[v])_{\partial \mcO_I}
\\ \nonumber 
&\qquad
+ 2(B_{\alpha}^{\beta_-} v , v)_{\partial \mcO_B}
\\
&= 2 ( (\alpha \nabla v,\nabla v)_\mcO +  (\alpha_{\nu\nu} [ v ],[v])_{\partial \mcO} ) +  ( (2\gamma- \Div \beta) v, v )_\mcO 
\\  \nonumber
&\qquad 
+(\beta_\nu [v],[v])_{\partial \mcO_I} + (\beta_\nu v, v)_{\partial \mcO_B} \\  \nonumber
&\qquad
+ 2(|(\beta_\nu)_-| [ v ],[v])_{\partial \mcO_I}
+  2( |(\beta_\nu)_-|  v , v)_{\partial \mcO_B}
\\
&= 2 ( (\alpha \nabla v,\nabla v)_\mcO +  (\alpha_{\nu\nu} [ v ],[v])_{\partial \mcO} ) + ( (2\kappa  +  \Div \beta) v, v )_\mcO 
\\ \nonumber
&\qquad 
+ ((|(\beta_\nu)_+| - |(\beta_\nu)_-|)[v],[v])_{\partial \mcO_I} + ((|(\beta_\nu)_+| - |(\beta_\nu)_-|) v, v)_{\partial \mcO_B}  
\\ \nonumber
&\qquad 
+ 2( |(\beta_\nu)_-| [ v ],[v])_{\partial \mcO_I}
+  2( |(\beta_\nu)_-|  v , v)_{\partial \mcO_B}
\\
&= 2 ( (\alpha \nabla v,\nabla v)_\mcO +  (\alpha_{\nu\nu} [ v ],[v])_{\partial \mcO} ) + ( (2\kappa  +  \Div \beta) v, v )_\mcO 
\\ \nonumber
&\qquad 
+ ( |\beta_\nu| [ v ],[v])_{\partial \mcO_I} + ( |\beta_\nu|  v , v)_{\partial \mcO_B}
\\
&\ge  
2\epsilon ( \|\nabla v\|_{\mcO}^2 + \|[v]\|_{\partial \mcO}^2 ) + c_0 \| v \|^2_{\mcO} 
+ \| |\beta_\nu|^{1/2} [ v ] \|^2_{\partial \mcO_I} + \|  |\beta_\nu|^{1/2} v  \|^2_{\partial \mcO_B}
\\ 
&\ge \min\{1,c_0\}\|v\|_{\alpha,\beta}^2
\end{align}
where at the second-last step we used the positivity \eqref{eq:elliptic-positivity} and assumption \eqref{eq:assumption-coeff}.
\end{proof}

\begin{rem}[Role of (\ref{eq:assumption-coeff})]
\label{rem:coercivity-poincare}
Condition~\eqref{eq:assumption-coeff} is the sufficient condition used
below to control the $L^2$-term in \eqref{eq:continuous-norm} and hence
to obtain coercivity. Positive diffusion alone does not give the stated
coercivity result by the present argument. In the diffusive case, it may
be possible to replace \eqref{eq:assumption-coeff} by an appropriate
mixed-dimensional Poincaré inequality, using techniques similar to
\cite{MR4682785}, where such an inequality was established for a pure
diffusion problem with Robin-type interface conditions. We do not
establish that alternative for the general setting considered here.
\end{rem}

In the rest of the article, we make the following assumptions on the coefficients
\begin{align}\label{eq:coefficients-behavior}
0\le \epsilon\lesssim 1,
\
\alpha_{\infty} = \|\alpha \|_{L^\infty(\mcO)} \sim \epsilon,
\
\beta_{\infty} = \|\beta \|_{L^\infty(\mcO)} \sim 1,
\
\gamma_{\infty} = \|\gamma \|_{L^\infty(\mcO)} \sim 1
\end{align}
and focus on the convection-dominated regime. For the diffusive analysis,
we assume that the global ellipticity condition
\eqref{eq:elliptic-positivity} holds with $\epsilon>0$ and that the
coefficient scaling above holds on the full mixed-dimensional domain.
The globally degenerate case $\alpha\equiv0$ is treated separately in
Section~\ref{sec:errorbound_pureconvection}, conditionally on the
existence of a sufficiently regular solution. Configurations in which
diffusion vanishes on some components but remains positive on others
are not covered by the present analysis, although the discrete method
is still well defined and such configurations are considered
numerically in Section~\ref{sec:numerics}. Except where explicitly
stated otherwise, the stability and error estimates below also assume
the coercivity condition \eqref{eq:assumption-coeff}.

We use the following trace-sum estimates:
\begin{align}
\|\llbracket \alpha_{\nu\nu}[v]\rrbracket\|_\mcO
&\lesssim \alpha_\infty\|[v]\|_{\partial\mcO_I}
\lesssim \epsilon\|[v]\|_{\partial\mcO_I}
\label{eq:alpha-interfacebnd}
\\
\|\llbracket \theta[v]\rrbracket\|_\mcO
&\lesssim \beta_\infty^{1/2}
\||\beta_\nu|^{1/2}[v]\|_{\partial\mcO_I},
\qquad
\theta\in\{\beta_\nu,|(\beta_\nu)_-|\}
\label{eq:beta-interfacebnd}
\end{align}
These estimates follow from Cauchy--Schwarz over the finitely many incident
traces and the coefficient bounds in \eqref{eq:coefficients-behavior}.

To prove continuity of $a(\cdot,\cdot)$, we apply the definition of the directional derivative $D_\beta$ together with the Cauchy--Schwarz inequality to obtain
\begin{align}
    |(D_\beta v, w)_\mcO| & \le \|\beta \cdot \nabla v - \llbracket \beta_\nu [v]\rrbracket\|_\mcO \| w\|_\mcO
    \\
    &\le ( \beta_{\infty}\|\nabla v\|_\mcO + \|\llbracket \beta_\nu [v]\rrbracket\|_\mcO ) \| w\|_\mcO
    \\
    &\lesssim ( \beta_{\infty}\|\nabla v\|_\mcO + \beta_{\infty}^{1/2} \| |\beta_\nu|^{1/2} [v]\|_{\partial\mcO_I} ) \| w\|_\mcO
    \label{eq:directionalderivbound}
\end{align}
which together with \eqref{eq:alpha-interfacebnd}--\eqref{eq:beta-interfacebnd} and the assumptions in \eqref{eq:coefficients-behavior} for $\epsilon>0$ imply
\begin{align}
    |a(v,w)|&\lesssim ( \epsilon\|\nabla v\|_\mcO\|\nabla w\|_\mcO +  \beta_{\infty}\|\nabla v\|_\mcO \|w\|_\mcO) + \epsilon\|[v]\|_{\partial\mcO_I}\|[w]\|_{\partial\mcO_I}
    \\
   & \qquad + \epsilon\|v\|_{\partial\mcO_B}\|w\|_{\partial\mcO_B} +  \beta_{\infty}^{1/2} \| |\beta_\nu|^{1/2} [v]\|_{\partial\mcO_I}  \| w\|_\mcO + \gamma_{\infty}\| v\|_\mcO\| w\|_\mcO  \nonumber
   \\
   & \qquad + \| |\beta_\nu|^{1/2} [v] \|_{\partial\mcO_I}\| |\beta_\nu|^{1/2} [w]\|_{\partial\mcO_I} + \| |\beta_\nu|^{1/2} v \|_{\partial\mcO_B}\| |\beta_\nu|^{1/2} w\|_{\partial\mcO_B}     \nonumber
   \\
   & \lesssim \epsilon^{-1/2}\|v\|_{\alpha,\beta}\|w\|_{\alpha, \beta}   \label{eq:a_bounded}
\end{align}

\begin{thm}[Well-posedness under the Coercivity Condition]
\label{thm:well-posedness}
Assume that the global ellipticity condition
\eqref{eq:elliptic-positivity} and the coefficient conditions
\eqref{eq:coefficients-behavior} hold with $\epsilon>0$, and that the
coercivity condition \eqref{eq:assumption-coeff} holds for some
$c_0>0$. If $f\in L^2(\mcO)$ and $g\in L^2(\partial \mcO_B)$, then the
problem \eqref{eq:weak-problem} admits a unique weak solution
$u\in V_{\alpha,\beta}$ such that
\begin{align}\label{eq:stabilt_ctssol}
    \|u\|_{\alpha,\beta}
    \lesssim \min\{1,c_0\}^{-1}
    \Big(\|f\|_\mcO
    + \|(B_{\alpha}^{\beta})^{1/2}g\|_{\partial\mcO_B}\Big)
\end{align}
\end{thm}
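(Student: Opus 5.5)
The plan is to apply the Lax--Milgram lemma on the Hilbert space $V_{\alpha,\beta}$, which is legitimate for $\epsilon>0$ because $\|\cdot\|_{\alpha,\beta}$ is then equivalent to the product $H^1(\mcO)$-norm. There are three ingredients.

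\textbf{Continuity.} Boundedness of $a(\cdot,\cdot)$ is already available from \eqref{eq:a_bounded}, namely $|a(v,w)|\lesssim\epsilon^{-1/2}\|v\|_{\alpha,\beta}\|w\|_{\alpha,\beta}$. This bound degenerates as $\epsilon\to0$, but for fixed $\epsilon>0$ it suffices for Lax--Milgram. The continuity constant does not enter the final stability bound; only the coercivity constant does.

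\textbf{Coercivity.} This is exactly Lemma~\ref{lem:coercivity} under \eqref{eq:assumption-coeff}, which gives $a(v,v)\ge\tfrac12\min\{1,c_0\}\|v\|_{\alpha,\beta}^2$.

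\textbf{Boundedness of $l$.} This step requires the most care, because the boundary term must be absorbed by norm components with matching weights. I write
\[
(B_\alpha^{\beta_-}g,w)_{\partial\mcO_B}=\bigl((B_\alpha^{\beta_-})^{1/2}g,(B_\alpha^{\beta_-})^{1/2}w\bigr)_{\partial\mcO_B},
\]
and note that $B_\alpha^{\beta_-}\le B_\alpha^\beta$ pointwise. Cauchy--Schwarz then bounds this term by $\|(B_\alpha^\beta)^{1/2}g\|_{\partial\mcO_B}\|(B_\alpha^\beta)^{1/2}w\|_{\partial\mcO_B}$. Next I split
\[
\|(B_\alpha^\beta)^{1/2}w\|_{\partial\mcO_B}^2=(\alpha_{\nu\nu}w,w)_{\partial\mcO_B}+\||\beta_\nu|^{1/2}w\|^2_{\partial\mcO_B}.
\]
The second term is a component of $\|w\|_{\alpha,\beta}^2$. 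For the first, \eqref{eq:coefficients-behavior} gives $\alpha_{\nu\nu}\le\alpha_\infty\lesssim\epsilon$. On $\partial\mcO_B$ the downward jump is $[w]=w$ by \eqref{eq:downward-jump-boundary}, so $(\alpha_{\nu\nu}w,w)_{\partial\mcO_B}\lesssim\epsilon\|[w]\|^2_{\partial\mcO}\le\|w\|^2_{\alpha,\beta}$. The volume term is handled directly: $(f,w)_\mcO\le\|f\|_\mcO\|w\|_{\alpha,\beta}$. Altogether,
\[
|l(w)|\lesssim\bigl(\|f\|_\mcO+\|(B_\alpha^\beta)^{1/2}g\|_{\partial\mcO_B}\bigr)\|w\|_{\alpha,\beta}.
\]
The key point is that no trace inequality is needed and no negative power of $\epsilon$ appears here.

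\textbf{Conclusion.} Lax--Milgram now yields a unique $u\in V_{\alpha,\beta}$ solving \eqref{eq:weak-problem}. For the estimate I test with $v=u$ and use coercivity:
\[
\tfrac12\min\{1,c_0\}\|u\|^2_{\alpha,\beta}\le a(u,u)=l(u)\lesssim\bigl(\|f\|_\mcO+\|(B_\alpha^\beta)^{1/2}g\|_{\partial\mcO_B}\bigr)\|u\|_{\alpha,\beta}.
\]
Dividing by $\|u\|_{\alpha,\beta}$ gives \eqref{eq:stabilt_ctssol}. Because the stability constant comes only from coercivity and the bound on $l$, it is independent of $\epsilon$, even though the continuity constant is not.
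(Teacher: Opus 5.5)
Your proposal is correct and follows the same route as the paper: Lax--Milgram on $V_{\alpha,\beta}$ using the continuity bound \eqref{eq:a_bounded} and Lemma~\ref{lem:coercivity}, then testing with $v=u$ to obtain the stability estimate. Your explicit bound on $l$, via $B_\alpha^{\beta_-}\le B_\alpha^{\beta}$, $\alpha_{\nu\nu}\lesssim\epsilon$, and $[w]=w$ on $\partial\mcO_B$, spells out a step the paper leaves implicit in its final inequality.
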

\begin{proof}
The well-posedness result is an immediate consequence of Lemma~\ref{lem:coercivity}, continuity \eqref{eq:a_bounded}, and the Lax--Milgram lemma. Since \eqref{eq:weak-problem} holds for all $v\in V_{\alpha,\beta}$, choose $v=u$ in \eqref{eq:weak-problem} and invoke the coercivity \eqref{eq:coercivity} to obtain
\begin{equation}\label{eq:stability-proof-estimate}
\frac{1}{2}\min\{1,c_0\}\| u\|_{\alpha,\beta}^2
\le a(u,u)
\lesssim \Big( \|f\|_\mcO
+ \|(B_{\alpha}^{\beta})^{1/2}g\|_{\partial\mcO_B} \Big)
\|u\|_{\alpha,\beta}
\end{equation}
which completes the proof of \eqref{eq:stabilt_ctssol}.
\end{proof}

\begin{rem}[Pure-Convection Limit]
When $\epsilon=0$, the diffusive control of tangential derivatives is lost, and the continuity argument above no longer applies in the stated energy space. A related well-posedness result for an embedded transport problem is given in \cite[Proposition~2.1]{BuHaLaZa19}. Extending that argument to the general mixed-dimensional setting considered here would require an appropriate transport graph space and additional assumptions on the flow, inflow boundaries, and coupling at junctions. We do not pursue such an analysis here. The pure-convection error estimates below are therefore conditional on the existence of a sufficiently regular solution.
\end{rem}

\begin{rem}[Junctions without Lower-Dimensional Flow]
\label{rem:continuity-sol}
Some discrete fracture-network flow models treat a lower-dimensional
intersection only as a junction and impose continuity of the
pressure across it; see, e.g., \cite{MR3038028,MR3595292}.
Such a model is not obtained from the present formulation merely by setting
the local tangential transport and reaction coefficients, together with the source,
to zero on the junction component. This removes transport on the junction
and reduces its equation to a balance of the incident fluxes, but the
Robin-type coupling remains and still permits differences between the
incident traces and the junction variable. Continuity must therefore
additionally be imposed by modifying the coupling formulation or the
underlying function space.

Related CutFEM constructions illustrate possible alternatives. The Nitsche
formulation in \cite{BuHaLaSa19} weakly enforces continuity between fracture
branches at junctions while incorporating the corresponding Kirchhoff flux
balance. Hybridized CutFEM formulations instead couple independent subdomain
spaces through a skeletal variable using Nitsche terms
\cite{BuElHaLaLa19}. Adapting either mechanism to the present
convection--diffusion problem would require modified coupling conditions and
a separate analysis.
\end{rem}

\section{The Cut Finite Element Method}
\subsection{Mesh and Finite Element Spaces}
Let $\Omega^0$ be a polygonal domain such that $\Omega\subset \Omega^0$, and let $\mcT_h^0$ be a family of quasi-uniform meshes on $\Omega^0$ with elements $T$ and mesh parameter $h\in(0,h_0]$. Let $V_h^0$ denote a finite element space defined on $\mcT_h^0$.
For each manifold $\Omega_{d,i}\in\mcO_d$ we define the active mesh
\begin{equation}\label{eq:mesh-active}
\mcT_{h,d,i} = \{ T \in \mcT_h^0 : T\cap \Omega_{d,i} \neq \emptyset \}
\end{equation}
and define the corresponding finite element space $V_{h,d,i}=V_h^0|_{\mcT_{h,d,i}}$, see Figure~\ref{fig:meshes}.

The global finite element space on $\mcO$ is defined as the product space
\begin{equation}
V_h = \prod_{d=0}^n V_{h,d}, \qquad
V_{h,d} = \prod_{i=1}^{n_d} V_{h,d,i}
\end{equation}

\paragraph{Locally Refined Meshes.}
We assume global quasi-uniformity below so that the stabilization parameters and estimates can be expressed using a single mesh size $h$. The CutFEM construction can be adapted to a shape-regular, locally graded background mesh by replacing $h$ with the local element diameters $h_T$. Since all active meshes are restrictions of the same background mesh, refinement near a lower-dimensional component automatically refines the corresponding portions of the adjacent higher-dimensional active meshes, while elements farther away may remain coarser. Establishing the resulting locally weighted stability and error estimates is beyond the scope of this work.

\begin{figure}
\centering
\begin{subfigure}[t]{.32\linewidth}\centering
\includegraphics[width=0.9\linewidth,trim={295 107 266 88},clip]{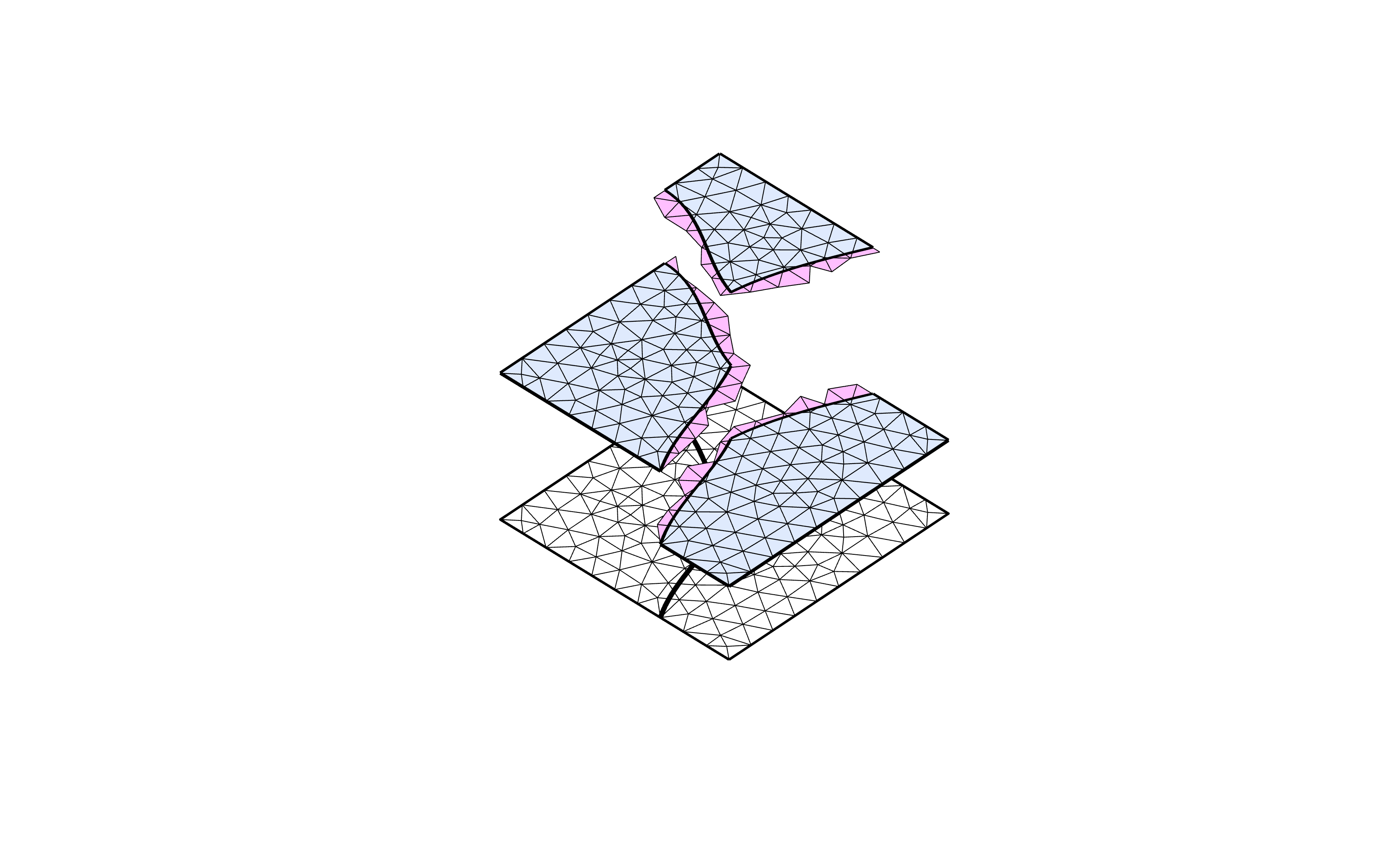}
\caption{$d=2$} \label{fig:meshes-codim0}
\end{subfigure}
\begin{subfigure}[t]{.32\linewidth}\centering
\includegraphics[width=0.9\linewidth,trim={295 107 266 88},clip]{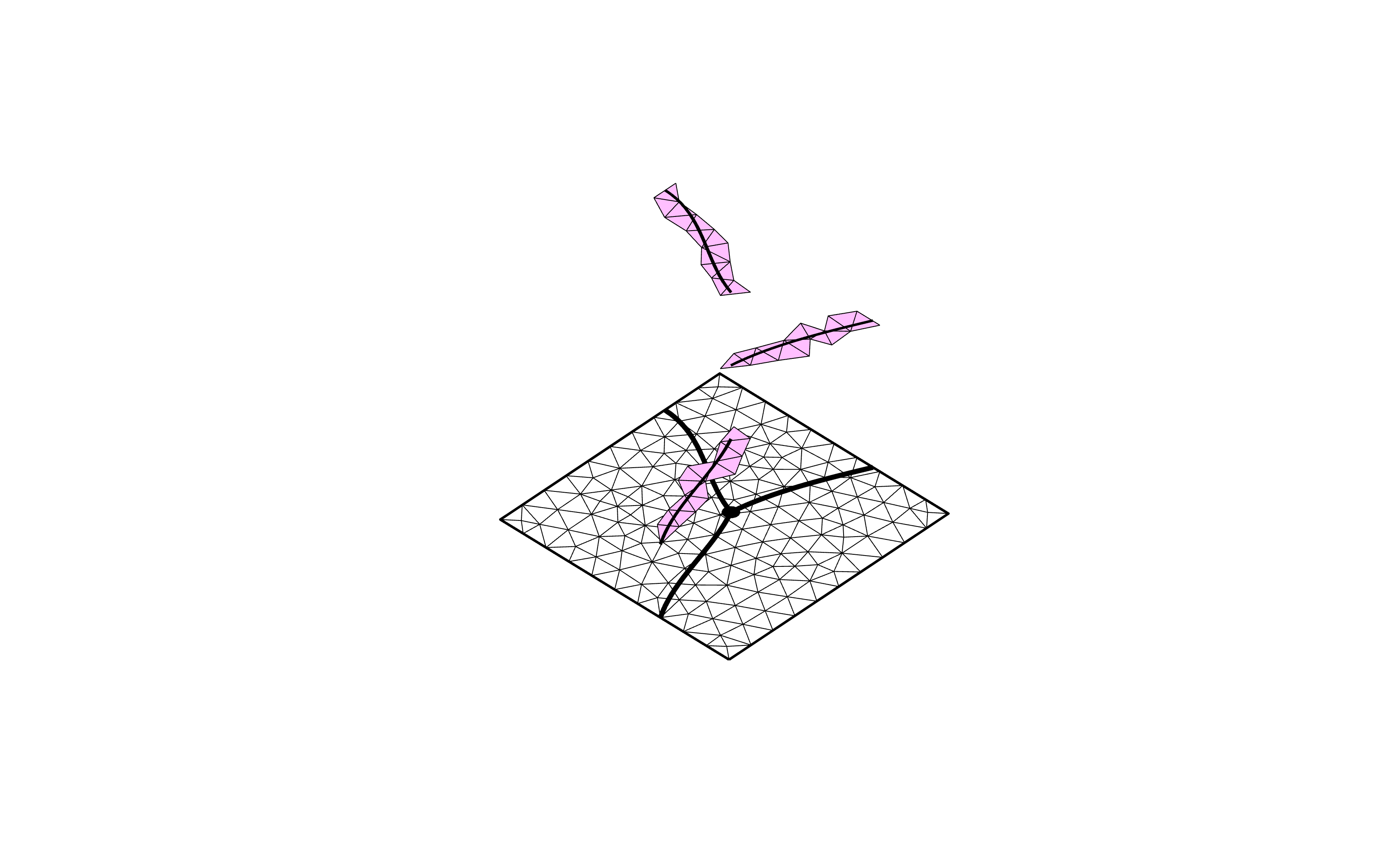}
\caption{$d=1$}\label{fig:meshes-codim1}
\end{subfigure}
\begin{subfigure}[t]{.32\linewidth}\centering
\includegraphics[width=0.9\linewidth,trim={295 107 266 88},clip]{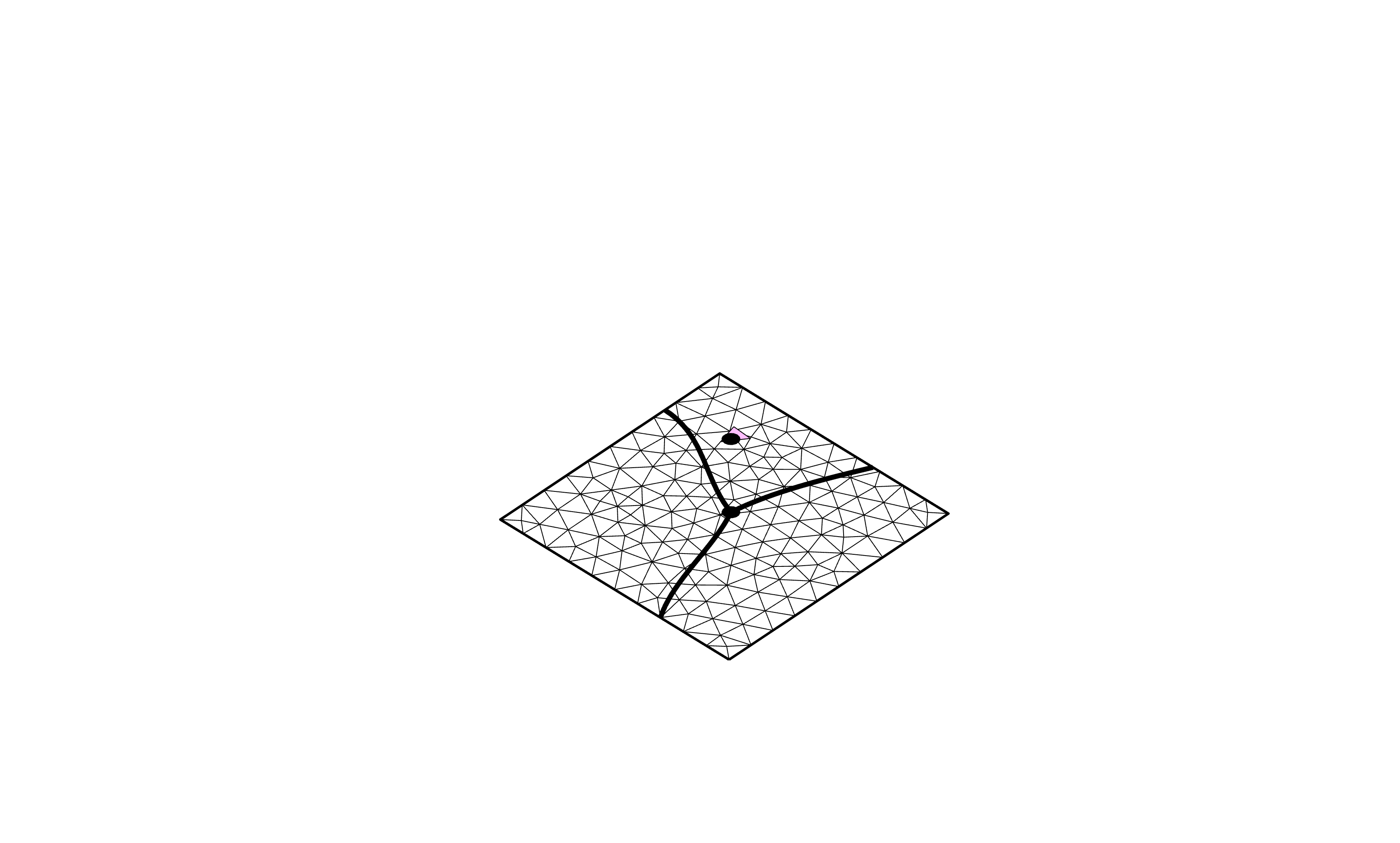}
\caption{$d=0$}\label{fig:meshes-codim2}
\end{subfigure}
\caption{Example of active meshes for a mixed-dimensional geometry in 2D consisting of three bulk domains ($d=2$), three cracks ($d=1$), and one bifurcation point ($d=0$). The colored parts are the active meshes $\{\mcT_{h,d,i}\}$.}
\label{fig:meshes}
\end{figure}

\subsection{Finite Element Method} 
We consider a finite element method based on the weak formulation \eqref{eq:weak-problem}, which naturally incorporates the coupling between the different manifolds. Using a conforming finite element space requires stabilization of the convection term. In addition, since the manifolds cut through the background mesh, extra stabilization is needed to control the variation of the discrete solution in directions orthogonal to the manifolds $\Omega_{d,i}$.

For simplicity we consider piecewise linear elements and employ the standard Galerkin least-squares (GLS) method together with the full gradient stabilization for cut elements developed in \cite{MR3548562}. The full gradient stabilization controls the variation of the discrete solution in directions orthogonal to the manifolds and also improves the conditioning of the resulting linear system. Since the stabilization is not consistent, it is scaled so that its consistency contribution is compatible with the energy-error bounds derived below. For linear elements, its artificial tangential diffusion coefficient is of order $h^3$, which gives a consistency contribution of order $h^{3/2}$ in the energy norm.

For higher order elements one may instead use a weaker full-gradient stabilization or a consistent alternative, such as the normal stabilization proposed in \cite{BuHaLaMa16,GraLehReu16} or the combined normal-face stabilization of \cite{LaZa17}.

\paragraph{Galerkin Least Squares (GLS).}
The GLS method reads: find $u_h\in V_h$ such that 
\begin{equation}\label{eq:fem}
a_h(u_h,v) = l_h(v)\qquad \forall v \in V_h
\end{equation}
where 
\begin{align}
    a_h(v,w) & = a(v,w) + (\tau_1 h ( L v - \llbracket  B_{\alpha}^{\beta_-}   [ v ]\rrbracket ), L w - \llbracket B_{\alpha}^{\beta_-} [ w ]\rrbracket )_{\mcO} + s_h(v,w)
    \\
    l_h(v) & = l(v) + (\tau_1 h f, L v - \llbracket B_{\alpha}^{\beta_-} [ v ]\rrbracket)_{\mcO}
\end{align}
In the above the linear forms $a(\cdot, \cdot)$ and $l(\cdot)$ are defined in \eqref{eq:continuousbilinear}--\eqref{eq:continuouslinear} and $\tau_1>0$ is a parameter defined by
\begin{align}\label{eq:weighting_paramtr}
\tau_1 &=
\begin{cases}
c_\tau\beta_{\infty}^{-1}
& \text{if } \beta_{\infty}h\ge \epsilon
\quad\text{(convection-dominated regime)}
\\
c_\tau h\epsilon^{-1}
& \text{if } \beta_{\infty}h\le \epsilon
\quad\text{(diffusion-dominated regime)}
\end{cases}
\\
&=c_\tau\min\{\beta_{\infty}^{-1},h\epsilon^{-1}\},
\qquad \epsilon>0,\quad\beta_\infty>0
\end{align}
where $c_\tau>0$ is a user-defined parameter. At the degenerate
endpoints, the piecewise definition is used directly: if $\epsilon=0$
and $\beta_\infty>0$, then $\tau_1=c_\tau\beta_\infty^{-1}$, while if
$\beta_\infty=0$ and $\epsilon>0$, then
$\tau_1=c_\tau h\epsilon^{-1}$. The case
$\epsilon=\beta_\infty=0$ is not considered here. For all choices above, we have
\begin{align}\label{eq:weight-diff-conv-bnd}
    \tau_1\epsilon\le c_\tau h \qquad \text{ and } \qquad \tau_1\beta_{\infty}\le c_\tau
\end{align}
Furthermore, the operator $L_{d,i}$ is defined by
\begin{align}
L_{d,i} v &= (D_\beta v + \gamma v)|_{d,i}
\\
&=\beta_{d,i}\cdot \nabla_{d,i} v_{d,i}  
+ ( \nabla_{d,i} \cdot \beta_{d,i}  + \kappa_{d,i}) v_{d,i}
- \llbracket \nu_{d+1} \cdot \beta_{d+1} v_{d+1}\rrbracket_{d,i}
\end{align}
and $s_h$ denotes the stabilization form
\begin{equation}
s_{h,d,i}(v,w) = \tau_2 h^{3 - (n-d)} (\nabla_{\IR^n} v, \nabla_{\IR^n} w )_{\mcT_{h,d,i}}
\end{equation}
where $\tau_2>0$ is a stabilization parameter and $\nabla_{\IR^n}$ denotes the gradient in $\IR^n$. 
The global stabilization form is
\begin{equation}
s_h(v,w)
=
\sum_{d=0}^n\sum_{i=1}^{n_d}
s_{h,d,i}(v_{d,i},w_{d,i})
\end{equation}
Note that $n-d$ is the codimension of $\Omega_{d,i}$; the factor $h^{-(n-d)}$ compensates for the fact that the integral is taken over the $n$-dimensional set $\mcT_{h,d,i}$. The additional factor $h^3$ ensures that the stabilization does not reduce the order of convergence.
After rearranging the terms, the method can be written in componentwise form as 
\begin{align}
 a_h(v,w) &= \sum_{d=0}^n \sum_{i=1}^{n_d} \Big( a_{h,d,i}(v,w) + s_{h,d,i} (v_{d,i},w_{d,i})  \Big)
\\&\qquad\notag
+ ( B_{\alpha}^{\beta_-} [v],[w])_{\partial \mcO_I} +  ( B_{\alpha}^{\beta_-} v,w)_{\partial \mcO_B}
\\
l_h(v) &= \sum_{d=0}^n  \sum_{i=1}^{n_d} l_{h,d,i}(v) + ( B_{\alpha}^{\beta_-} g,v)_{\partial \mcO_B}
\end{align}
where $a_{h,d,i}:V_h\times V_h\to\IR$ and
$l_{h,d,i}:V_h\to\IR$ are, respectively, the local bilinear and linear
contributions defined by
\begin{align}
a_{h,d,i}(v,w) 
&=
(\alpha_{d,i} \nabla_{d,i} v_{d,i}, \nabla_{d,i} w_{d,i})_{\Omega_{d,i}}
+ (L_{d,i} v, w_{d,i})_{\Omega_{d,i}}  \label{eq:method-a}
\\
&\qquad + (\tau_1 h ( L_{d,i} v - \llbracket  B_{\alpha}^{\beta_-}   [ v ]\rrbracket_{d,i} ), L_{d,i} w - \llbracket B_{\alpha}^{\beta_-} [ w ]\rrbracket_{d,i} )_{\Omega_{d,i}} \nonumber
\\
l_{h,d,i}(v) &= (f_{d,i}, v_{d,i})_{\Omega_{d,i}} 
+ (\tau_1 h f_{d,i}, L_{d,i} v - \llbracket B_{\alpha}^{\beta_-} [ v ]\rrbracket_{d,i})_{\Omega_{d,i}} \label{eq:method-b}
\end{align}

\begin{rem}[Pure Convection]
In the pure-convection case ($\alpha=0$), our method differs slightly
from \cite[Eqns.~(3.6)--(3.7)]{BuHaLaLa19}. In that work, the equation
$L u=f$ is stabilized directly, while the inflow coupling is included
separately in the weak form. Here, the GLS residual is inherited from
the convection--diffusion formulation, where the Robin condition is
used to eliminate the conormal diffusive flux. Consequently, the
pure-convection limit contains the additional term
\begin{equation*}
-\llbracket |(\beta_\nu)_-|[v]\rrbracket_d
\end{equation*}
This term is consistent, since the inflow condition implies
$|(\beta_\nu)_-|[u]=0$ for the exact solution, but it results in a
slightly different discrete stabilization.
\end{rem}

\begin{rem}[Classical Interface Problems]
The component-space construction also provides a natural starting point for
classical interface problems on same-dimensional subdomains, such as diffusion
with piecewise constant coefficients. In this setting, the common
lower-dimensional interface carries no separate transport equation;
continuity and flux balance may instead be imposed by replacing the present
Robin coupling, for example using a hybridized CutFEM formulation
\cite{BuElHaLaLa19}. Such a variant is not covered by the present
analysis.
\end{rem}

\section{Error Estimates}\label{sec:errorbounds} 

We prove a basic error estimate in the natural energy norm associated with the GLS method. We assume that the geometry is represented exactly and that all integrals are computed exactly. Under these assumptions, the proof follows standard arguments combined with interpolation error estimates for manifolds of arbitrary codimension. Geometric error estimates can be derived using a generalization of the approach developed in \cite{BuHaLaMa16}.

\subsection{Continuity and Coercivity}
Let 
\begin{equation}\label{eq:extended-space}
V^e = \{v^e = Ev: v \in V_{\alpha,\beta} \}, \qquad W = V^e + V_h
\end{equation}
where $E$ is the extension operator introduced in Section~\ref{sec:defs}. Define the energy norm associated with the bilinear form $a_h(\cdot,\cdot)$ by
\begin{equation}\label{eq:energy-norm}
\tn v \tn^2_h = \| v \|_{\alpha,\beta}^2 + \tau_1 h \| L v \|^2_\mcO + \| v \|^2_{s_h}, 
\qquad v \in W
\end{equation}
and the weighted energy norm by 
\begin{equation}\label{eq:energy-norm-cont}
\tn v \tn^2_{h,*} = \tau_1^{-1}h^{-1} \| v \|^2_\mcO+\tn v \tn^2_h, 
\qquad v \in W
\end{equation}
which we will need in the statement of continuity of the discrete bilinear form $a_h(\cdot,\cdot)$ and error analysis.

\begin{lem}[Continuity and Coercivity]
The form $a_h(\cdot,\cdot)$ is continuous
\begin{equation}\label{eq:Ah-continuity}
|a_h(v, w)| \lesssim \tn v \tn_{h} \tn w \tn_{h,*}, \qquad v,w \in W
\end{equation}
and, for sufficiently small $h$, it is coercive provided \eqref{eq:assumption-coeff} holds,
\begin{equation}\label{eq:Ah-coercivity}
\tn v \tn_h^2 \lesssim a_h(v,v)\qquad v \in W
\end{equation}
\end{lem}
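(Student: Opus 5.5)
Both bounds follow by splitting $a_h$ into its three pieces: the continuous form $a(\cdot,\cdot)$, the GLS term, and $s_h$. For coercivity we combine Lemma~\ref{lem:coercivity} with the positivity of the GLS and stabilization terms and absorb the cross terms. For continuity we integrate the convective part of $a$ by parts using Lemma~\ref{lem:partial-integration}, which moves the derivative onto $v$. This explains the asymmetric norm pairing $\tn v\tn_h\tn w\tn_{h,*}$.

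\textbf{Coercivity.} Write $\mathcal{R}v = Lv - \llbracket B_\alpha^{\beta_-}[v]\rrbracket$. Then
\[
a_h(v,v)=a(v,v)+\tau_1 h\|\mathcal{R}v\|_\mcO^2+\|v\|_{s_h}^2 .
\]
By Lemma~\ref{lem:coercivity}, $a(v,v)\ge \tfrac12\min\{1,c_0\}\|v\|_{\alpha,\beta}^2$. It remains to recover $\tau_1 h\|Lv\|_\mcO^2$ from $\tau_1 h\|\mathcal{R}v\|_\mcO^2$. We use $\|Lv\|^2\le 2\|\mathcal{R}v\|^2+2\|\llbracket B_\alpha^{\beta_-}[v]\rrbracket\|^2$. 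The trace-sum estimates \eqref{eq:alpha-interfacebnd}--\eqref{eq:beta-interfacebnd} give
\[
\tau_1 h\|\llbracket B_\alpha^{\beta_-}[v]\rrbracket\|_\mcO^2\lesssim \tau_1 h\bigl(\epsilon^2\|[v]\|^2_{\partial\mcO_I}+\beta_\infty\||\beta_\nu|^{1/2}[v]\|^2_{\partial\mcO_I}\bigr).
\]
By \eqref{eq:weight-diff-conv-bnd} we have $\tau_1\epsilon\le c_\tau h$ and $\tau_1\beta_\infty\le c_\tau$, so the right-hand side is $\lesssim h\,\|v\|_{\alpha,\beta}^2$. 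For $h$ sufficiently small this is absorbed by $a(v,v)$, which is the role of the smallness assumption on $h$. This yields \eqref{eq:Ah-coercivity}.

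\textbf{Continuity.} We bound each piece of $a_h(v,w)$ separately.
\begin{itemize}
\item \emph{Diffusive terms.} The terms $(\alpha\nabla v,\nabla w)_\mcO$ and $(\alpha_{\nu\nu}[v],[w])_{\partial\mcO}$ are bounded by $\epsilon$-weighted Cauchy--Schwarz, using $\alpha_\infty\sim\epsilon$.
\item \emph{Reaction terms.} The terms $(\gamma v,w)_\mcO$ are bounded by $\gamma_\infty\|v\|_\mcO\|w\|_\mcO$.
\item \emph{Boundary terms.} The $|(\beta_\nu)_-|$ parts of the $B_\alpha^{\beta_-}$ terms are bounded by the weighted boundary seminorms contained in $\|\cdot\|_{\alpha,\beta}$.
\item \emph{Convection term.} Naively, $(D_\beta v,w)_\mcO$ would produce $\|\nabla v\|$ without an $\epsilon$ weight. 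Instead we write $D_\beta v = Lv-\gamma v$, so that
\[
(D_\beta v,w)_\mcO=(Lv,w)_\mcO-(\gamma v,w)_\mcO,
\]
with
\[
|(Lv,w)_\mcO|\le (\tau_1h)^{1/2}\|Lv\|_\mcO\,(\tau_1h)^{-1/2}\|w\|_\mcO\le\tn v\tn_h\tn w\tn_{h,*}.
\]
This is exactly why the weighted term $\tau_1^{-1}h^{-1}\|w\|^2_\mcO$ appears in $\tn\cdot\tn_{h,*}$, and it avoids integrating by parts altogether.
\item \emph{GLS term.} Cauchy--Schwarz gives $(\tau_1h)^{1/2}\|\mathcal{R}v\|\,(\tau_1h)^{1/2}\|\mathcal{R}w\|$. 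Each factor is controlled by $\tn\cdot\tn_h$ via the same trace-sum bound as in the coercivity step.
\item \emph{Stabilization.} Cauchy--Schwarz in $s_h$ suffices.
\end{itemize}

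\textbf{Main obstacle.} The delicate point is the $\epsilon$-robustness of the diffusive part of $(\alpha\nabla v,\nabla w)$ and of the interface terms when $v,w\in W$ are merely extensions. All terms appearing are integrals over the manifolds themselves, where $v$ is well defined, so no discrete trace inequalities are needed. The only subtlety is making sure the $\epsilon\|[v]\|_{\partial\mcO_B}$ contributions are matched by the norm. This holds because $[v]=v$ on $\partial\mcO_B$ and the norm contains $\epsilon\|[v]\|^2_{\partial\mcO}$.
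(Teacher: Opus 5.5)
Your proposal is correct and follows essentially the paper's proof. For continuity, both use termwise Cauchy--Schwarz, combining $(D_\beta v,w)_\mcO+(\gamma v,w)_\mcO=(Lv,w)_\mcO$ and splitting it with the weights $(\tau_1h)^{\pm1/2}$, and both handle the GLS term with the trace-sum bound $(\tau_1h)^{1/2}\|\llbracket B_\alpha^{\beta_-}[v]\rrbracket\|_\mcO\lesssim h^{1/2}\tn v\tn_h$. For coercivity, both use Lemma~\ref{lem:coercivity}, a Young splitting of the GLS cross term (your inequality $\|Lv\|^2\le2\|\mathcal{R}v\|^2+2\|\llbracket B_\alpha^{\beta_-}[v]\rrbracket\|^2$ is the paper's argument with $\delta=1/2$), and absorption of the resulting $O(h)\|v\|_{\alpha,\beta}^2$ interface remainder for small $h$.

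The only blemish is your opening remark that continuity comes from integrating the convection term by parts via Lemma~\ref{lem:partial-integration}. Your actual argument rightly does not do this. Doing so would put the derivative on $w$ and give the transposed pairing $\tn v\tn_{h,*}\tn w\tn_h$ instead of the stated one.
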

\begin{proof}
\proofparagraph{Continuity.}  We utilize \eqref{eq:coefficients-behavior}--\eqref{eq:beta-interfacebnd} and \eqref{eq:weight-diff-conv-bnd} to see that 
\begin{align}
   (\tau_1 h)^{1/2} \|\llbracket B_{\alpha}^{\beta_-} [v] \rrbracket \|_\mcO & = (\tau_1 h)^{1/2} \|\llbracket ( \alpha_{\nu\nu} + |(\beta_\nu)_-| ) [v] \rrbracket \|_\mcO 
   \\
   &\lesssim  (\tau_1 h)^{1/2} ( \epsilon \|[v]\|_{\partial \mcO_I}  + \beta_{\infty}^{1/2}\| |\beta_\nu|^{1/2} [v] \|_{\partial \mcO_I} )
   \label{eq:alpha-beta-interface-bnd1}
   \\
   &\lesssim h^{1/2} ( h^{1/2} \epsilon^{1/2} \|[v]\|_{\partial \mcO_I}  + \| |\beta_\nu|^{1/2} [v] \|_{\partial \mcO_I} )
   \\
   &\lesssim h^{1/2} \tn v \tn_h   
   \label{eq:alpha-beta-interface-bnd2}
\end{align}
Apply the Cauchy--Schwarz inequality in all terms of $a_h(\cdot,\cdot)$ to obtain
\begin{align}
    |a_h(v, w)| &\le  \| \alpha \nabla v\|_\mcO \| \nabla w\|_\mcO + (\tau_1 h)^{1/2} \| Lv\|_\mcO (\tau_1 h)^{-1/2} \| w\|_\mcO
    \\ \nonumber
    &\quad + \| ( B_{\alpha}^{\beta_-} )^{1/2} [v]  \|_{\partial \mcO_I} \| ( B_{\alpha}^{\beta_-} )^{1/2} [w]  \|_{\partial \mcO_I} 
    \\ \nonumber
    &\quad + \| ( B_{\alpha}^{\beta_-} )^{1/2} v  \|_{\partial \mcO_B} \| ( B_{\alpha}^{\beta_-} )^{1/2} w  \|_{\partial \mcO_B}
    \\ \nonumber
    &\quad + \tau_1 h \| Lv\|_\mcO \| Lw\|_\mcO + \tau_1 h \| Lv\|_\mcO \|\llbracket  B_{\alpha}^{\beta_-}  [w] \rrbracket \|_\mcO  
    \\ \nonumber
    &\quad + \tau_1 h \|\llbracket  B_{\alpha}^{\beta_-}  [v] \rrbracket \|_\mcO \| Lw\|_\mcO 
    \\ \nonumber
    &\quad + \tau_1 h \|\llbracket  B_{\alpha}^{\beta_-}  [v] \rrbracket \|_\mcO  \|\llbracket  B_{\alpha}^{\beta_-}  [w] \rrbracket \|_\mcO + \|v\|_{s_h}\|w\|_{s_h}
\end{align}
which along with \eqref{eq:alpha-beta-interface-bnd2} completes the proof of \eqref{eq:Ah-continuity}.

\proofparagraph{Coercivity.} Proceed as in \eqref{eq:alpha-beta-interface-bnd1} to show that 
\begin{align}
  - \tau_1 h \|\llbracket ( B_{\alpha}^{\beta_-} ) [v] \rrbracket \|_\mcO^2 & = - \tau_1 h \|\llbracket ( \alpha_{\nu\nu} + |(\beta_\nu)_-| ) [v] \rrbracket \|_\mcO^2 
  \\
  & \ge - C_J \tau_1 h \epsilon^2 \|[v]\|_{\partial \mcO_I}^2 - C_J \tau_1 h\beta_\infty \||\beta_\nu|^{1/2}[v]\|_{\partial \mcO_I}^2
  \\
  & \ge - C_J c_\tau h^2 \epsilon \|[v]\|_{\partial \mcO_I}^2 - C_J c_\tau h\||\beta_\nu|^{1/2}[v]\|_{\partial \mcO_I}^2 \label{eq:alpha-beta-interface-bnd-coercivity}
\end{align}
where $C_J>0$ depends only on the fixed geometry and coefficient-comparison constants.

The coercivity \eqref{eq:Ah-coercivity} follows by observing that 
\begin{align}\label{eq:discretebilinear}
a_h(v,v) & =  a(v,v) + (\tau_1 h L v, L v)_{\mcO} + \tau_1 h \|\llbracket B_{\alpha}^{\beta_-} [v] \rrbracket \|_\mcO^2 
\\
& \qquad - 2 \tau_1 h (Lv, \llbracket B_{\alpha}^{\beta_-} [v] \rrbracket)_{\mcO} + s_h(v,v) \nonumber
\end{align} 
Set $c_a=\frac12\min\{1,c_0\}$. Lemma~\ref{lem:coercivity}, Young's inequality with $0<\delta<1$, and inequality \eqref{eq:alpha-beta-interface-bnd-coercivity} then give
\begin{align}
a_h(v,v)
&\ge c_a\| v \|^2_{\alpha,\beta}
+ (1-\delta)\tau_1 h \| L v \|^2_\mcO + \| v \|^2_{s_h}
\\
&\qquad - (\delta^{-1}-1)\tau_1 h
\|\llbracket B_{\alpha}^{\beta_-} [v] \rrbracket \|_\mcO^2 \nonumber
\\
&\ge c_a\| v \|^2_{\alpha,\beta}
+ \delta_1\tau_1 h \| L v \|^2_\mcO + \| v \|^2_{s_h}
\\
&\qquad - C_J\delta_2 c_\tau h^2 \epsilon \|[v]\|_{\partial \mcO_I}^2
- C_J\delta_2 c_\tau h\||\beta_\nu|^{1/2}[v]\|_{\partial \mcO_I}^2 \nonumber
\\
&\ge \frac{c_a}{2}\|v\|_{\alpha,\beta}^2
+ \delta_1\tau_1 h \| L v \|^2_\mcO + \| v \|^2_{s_h}
\\
&\gtrsim \tn v \tn^2_h
\end{align}
where $\delta_1=1-\delta>0$ and $\delta_2=\delta^{-1}-1>0$. For sufficiently small $h$, we have
$C_J\delta_2c_\tau h^2\le c_a/2$ and
$C_J\delta_2c_\tau h\le c_a/2$, which proves the result.
\end{proof}

\subsection{Interpolation Error Estimates}
We define an interpolation operator $\pi_h:H^s(\mcO)\rightarrow V_h$ componentwise by the Cl\'ement interpolant on each active mesh. It satisfies the estimate
\begin{equation}\label{eq:interpol-energy}
\tn u - \pi_h u \tn^2_{h,*}\lesssim \Lambda_I^2 h^{2} \| u \|^2_{H^{2}(\mcO)}
\end{equation}
where 
\begin{equation}\label{eq:energy-norm_paramtr}
\Lambda_I^2 :=  \max\{\epsilon,(\beta_\infty + 1)h\} + \max\{\beta_\infty^{-1}\gamma_\infty^2h^3,  \beta_\infty h\}  + h  
\end{equation} 
More precisely, we define $\pi_h$ by
\begin{equation}
\pi_h u = \pi_{h,Cl} u^e
\end{equation} 
where $\pi_{h,Cl}: L^2(\mcT_{h,d,i}) \rightarrow V_{h,d,i}$ is the usual Clement interpolator.
Here $u^e$ is the prescribed normal extension
\eqref{eq:extension-operator}. In particular, this construction does
not require taking the trace of an arbitrary ambient $H^s$ function
on $\Omega_{d,i}$.
We refer to \cite{BuHaLaMa16} for further details including a proof of the basic interpolation 
estimate 
\begin{equation}\label{eq:interpol-energy_basic}
\| u - \pi_h u \|^2_{\Omega_{d,i}} + h^2 \| \nabla_d ( u - \pi_h u )\|^2_{\Omega_{d,i}}
\lesssim h^4 \| u \|^2_{H^2(\Omega_{d,i})}
\end{equation}
which is then used to derive \eqref{eq:interpol-energy}.

In addition, for lower regularity functions we have the approximation property (cf. \cite[Subsection 3.6]{BuHaLa24}): for $u\in H^{s}(\mcO)$
\begin{equation}\label{eq:interpol-energy_basic_lowregulart}
\| u - \pi_h u \|^2_{H^{m}(\mcO)}\lesssim h^{2(s-m)}\| u \|^2_{H^{s}(\mcO)} \qquad 0\le m\le 1\le s\le 2
\end{equation}
which is employed to establish 
\begin{equation}\label{eq:interpol-energy_lowregulart}
\tn u - \pi_h u \tn^2_{h,*}\lesssim \Lambda_I^2 h^{2(s-1)} \| u \|^2_{H^{s}(\mcO)} \qquad 1\le s\le 2
\end{equation}

\subsection{Error Estimates}
\begin{thm}[Energy Error Bound]\label{thm:energy-error}
Assume that the global ellipticity condition
\eqref{eq:elliptic-positivity} and the coefficient conditions
\eqref{eq:coefficients-behavior} hold with $\epsilon>0$, and that the
coercivity condition \eqref{eq:assumption-coeff} holds. Assume also that
$\alpha$ is componentwise $W^{1,\infty}$ and satisfies
\begin{equation}
\max_{d,i}\|\nabla_{d,i}\alpha_{d,i}\|_{L^\infty(\Omega_{d,i})}
\lesssim\epsilon
\end{equation}
If the continuous solution $u$ to \eqref{eq:weak-problem} belongs to
$H^2(\mcO)$ and $u_h$ is the finite element approximation defined by
\eqref{eq:fem}, with $\tau_1$ chosen as in
\eqref{eq:weighting_paramtr}, then for sufficiently small $h$,
\begin{equation}\label{eq:energy-error-bound}
\tn u - u_h \tn_h \lesssim \Lambda_e h \|u\|_{H^2(\mcO)}
\end{equation}
where $\Lambda_e = \Lambda_I$ and $\Lambda_I$ is defined in \eqref{eq:energy-norm_paramtr}.
\end{thm}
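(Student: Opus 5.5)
The plan is the standard Strang-type argument for stabilized methods. We split the error as
\[
u-u_h=(u-\pi_h u)+(\pi_h u-u_h)=:\eta+\xi_h,\qquad \xi_h\in V_h .
\]
By the triangle inequality and \eqref{eq:interpol-energy}, $\tn\eta\tn_h\le\tn\eta\tn_{h,*}\lesssim\Lambda_I h\|u\|_{H^2(\mcO)}$, so it remains to bound $\tn\xi_h\tn_h$. We use coercivity \eqref{eq:Ah-coercivity}, valid for sufficiently small $h$ under \eqref{eq:assumption-coeff}, to write
\[
\tn\xi_h\tn_h^2\lesssim a_h(\xi_h,\xi_h)=a_h(u-u_h,\xi_h)-a_h(\eta,\xi_h).
\]
The second term is handled by continuity \eqref{eq:Ah-continuity} with the roles arranged so that the $\tn\cdot\tn_{h,*}$ norm falls on $\eta$. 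This is possible because the continuity proof is symmetric: the term $(\tau_1h)^{-1/2}\|w\|_\mcO$ arose from $(Lv,w)_\mcO$, and for $(L\eta,\xi_h)_\mcO$ we can instead integrate by parts using Lemma~\ref{lem:partial-integration}. This moves $D_\beta$ onto $\xi_h$, and the pieces $\|\eta\|_\mcO\,\|L\xi_h\|_\mcO\le(\tau_1h)^{-1/2}\|\eta\|_\mcO\,(\tau_1h)^{1/2}\|L\xi_h\|_\mcO$ and the boundary terms $\||\beta_\nu|^{1/2}[\eta]\|\,\||\beta_\nu|^{1/2}[\xi_h]\|$ are controlled by $\tn\eta\tn_{h,*}\tn\xi_h\tn_h$. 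Either way we obtain $|a_h(\eta,\xi_h)|\lesssim\tn\eta\tn_{h,*}\tn\xi_h\tn_h$.

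The first term measures the lack of Galerkin orthogonality. The forms $a$, $l$ and the GLS residual term are consistent. Since $u\in H^2(\mcO)$ solves \eqref{eq:problem-a}--\eqref{eq:problem-c} strongly, integration by parts gives $a(u,v)=l(v)$ for $v\in V_h$ restricted to $\mcO$. Moreover, by \eqref{eq:problem-a} and the rewriting \eqref{eq:abstract-rewrite},
\[
Lu-\llbracket B_\alpha^{\beta_-}[u]\rrbracket=f-(A_\alpha u)_\mcO+\llbracket\alpha_{\nu\nu}[u]\rrbracket,
\]
that is, $Lu-\llbracket B_\alpha^{\beta_-}[u]\rrbracket-f=\nabla\cdot(\alpha\nabla u)$ componentwise, the $\llbracket\alpha_{\nu\nu}[u]\rrbracket$ parts cancelling. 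This is the first delicate point: with piecewise linear test functions the residual of the GLS term is not zero but equals $-\tau_1h\big(\nabla\cdot(\alpha\nabla u),\,L\xi_h-\llbracket B_\alpha^{\beta_-}[\xi_h]\rrbracket\big)_\mcO$. We treat it as a consistency error. Using $\|\nabla\cdot(\alpha\nabla u)\|_\mcO\lesssim(\alpha_\infty+\|\nabla\alpha\|_\infty)\|u\|_{H^2}\lesssim\epsilon\|u\|_{H^2}$, which is exactly where the $W^{1,\infty}$ assumption on $\alpha$ enters, together with \eqref{eq:alpha-beta-interface-bnd2}, this term is bounded by
\[
(\tau_1h)^{1/2}\epsilon\|u\|_{H^2}\,\tn\xi_h\tn_h\lesssim(\tau_1h\epsilon)^{1/2}\epsilon^{1/2}\|u\|_{H^2}\,\tn\xi_h\tn_h\lesssim h\,\epsilon^{1/2}\|u\|_{H^2}\,\tn\xi_h\tn_h ,
\]
where the last step uses \eqref{eq:weight-diff-conv-bnd}. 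Since $\epsilon\le\Lambda_I^2$, this is admissible.

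The remaining inconsistency is the stabilization, $a_h(u,\xi_h)-l_h(\xi_h)\ni s_h(u^e,\xi_h)$. We bound it by $\|u^e\|_{s_h}\|\xi_h\|_{s_h}$ and estimate
\[
\|u^e\|_{s_h}^2=\tau_2\sum_{d,i}h^{3-(n-d)}\|\nabla u^e\|^2_{\mcT_{h,d,i}} .
\]
The active mesh lies in an $O(h)$ tubular neighbourhood of $\Omega_{d,i}$, and the closest-point extension is constant in normal directions. The tube estimate $\|\nabla u^e\|^2_{U_{ch}}\lesssim h^{n-d}\|u\|^2_{H^1(\Omega_{d,i})}$ then gives $\|u^e\|_{s_h}^2\lesssim h^3\|u\|_{H^1}^2$. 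This contributes $h^{3/2}\lesssim \Lambda_I h$, since $\Lambda_I^2\ge h$.

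Collecting the three bounds, dividing by $\tn\xi_h\tn_h$, and adding the estimate for $\eta$ yields \eqref{eq:energy-error-bound} with $\Lambda_e=\Lambda_I$. I expect the main obstacle to be the continuity step. The stated bound \eqref{eq:Ah-continuity} puts the weaker $\tn\cdot\tn_h$ norm on the first argument, so applying it directly to $a_h(\eta,\xi_h)$ would require $\tn\xi_h\tn_{h,*}$, which is not controlled. I will therefore redo the estimate of $(L\eta,\xi_h)_\mcO$ via the partial integration formula \eqref{eq:Dbeta-partial-integration}, carefully tracking the interface terms with $[\eta]$ and the $\gamma$ and $\Div\beta$ terms, so that every $\eta$-factor lands in $\tn\eta\tn_{h,*}$.
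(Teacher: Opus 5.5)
Your proposal is correct, but it splits the error differently from the paper, and that choice creates the obstacle you flag.

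\textbf{The paper's route.} The paper applies coercivity \eqref{eq:Ah-coercivity} to the full error $u-u_h\in W$; the lemma is stated on $W$, not only on $V_h$. It then writes $a_h(u-u_h,u-u_h)=a_h(u-u_h,u-\pi_h u)+a_h(u-u_h,\rho_h)$ with $\rho_h=\pi_h u-u_h$. In the first term the error sits in the first slot and the interpolation error in the second. The stated continuity bound \eqref{eq:Ah-continuity} therefore applies verbatim, with $\tn\cdot\tn_{h,*}$ falling on $u-\pi_h u$. The second term equals $a_h(u,\rho_h)-l_h(\rho_h)=I+II$, bounded exactly as you do. The argument closes with $\tn\rho_h\tn_h\le\tn u-\pi_h u\tn_h+\tn u-u_h\tn_h$ and a kickback, so the transposed continuity never arises.

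\textbf{Your route.} Using coercivity only on the discrete error $\xi_h$ puts $\eta$ into the first slot. You correctly supply the missing transposed bound by the classical SUPG device. By \eqref{eq:Dbeta-partial-integration},
\[
(L\eta,\xi_h)_\mcO=-(\eta,L\xi_h)_\mcO+((2\kappa+\Div\beta)\eta,\xi_h)_\mcO+(\beta_\nu[\eta],[\xi_h])_{\partial\mcO_I}+(\beta_\nu\eta,\xi_h)_{\partial\mcO_B},
\]
and each term is bounded by $\tn\eta\tn_{h,*}\tn\xi_h\tn_h$, using smoothness of $\beta$ for $\Div\beta\in L^\infty$. The remaining terms of $a_h$ (diffusion, Robin/inflow terms, GLS term, $s_h$) are bounded symmetrically in $\tn\cdot\tn_h$.

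\textbf{Comparison.} The paper's version is shorter and reuses the continuity lemma as stated. It relies on coercivity on all of $W$, which here comes from the continuous coercivity in Lemma~\ref{lem:coercivity}. Your version needs coercivity only on $V_h$, so it would survive where stability holds only at the discrete level. The cost is the extra integration by parts and its interface bookkeeping. Both give $\Lambda_I h\|u\|_{H^2(\mcO)}$.

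\textbf{A small slip.} In your identity for $Lu-\llbracket B_\alpha^{\beta_-}[u]\rrbracket$, the term $\llbracket\alpha_{\nu\nu}[u]\rrbracket$ should carry a minus sign, since $(A_\alpha u)_d=-\nabla_d\cdot(\alpha_d\nabla_d u_d)-\llbracket\alpha_{\nu\nu}[u]\rrbracket_d$. Only with that sign do the trace-sum parts cancel. Your conclusion $Lu-\llbracket B_\alpha^{\beta_-}[u]\rrbracket-f=\nabla\cdot(\alpha\nabla u)$ is nevertheless correct.
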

\begin{proof}
By virtue of coercivity \eqref{eq:Ah-coercivity}, continuity \eqref{eq:Ah-continuity}, and discrete scheme \eqref{eq:fem} we obtain 
\begin{align}
\tn u - u_h \tn_h^2 &\lesssim a_h(u - u_h, u - u_h) 
\\
&= a_h(u - u_h, u - \pi_h u) + a_h(u - u_h, \underbrace{\pi_hu - u_h}_{= \rho_h}) 
\\
&\lesssim \tn u - u_h \tn_h \tn u - \pi_h u\tn_{h,*} + a_h(u, \rho_h) - l_h(\rho_h) 
\\
&\lesssim \delta \tn u - u_h \tn^2_h + \delta^{-1} \tn u - \pi_h u\tn^2_{h,*} 
\\
& \qquad+ \underbrace{(\tau_1h (Lu - \llbracket B_{\alpha}^{\beta_-} [ u ]\rrbracket - f), L\rho_h 
 - \llbracket B_{\alpha}^{\beta_-} [ \rho_h ]\rrbracket)_\mcO}_{I}  + \underbrace{s_h(u^e, \rho_h)}_{II} \nonumber
\end{align}
for $\delta>0$.

\proofparagraph{Term $I$.}  
Using \eqref{eq:problem-a}, the definition of $A_\alpha$ in \eqref{eq:elliptic-operator}, the Cauchy--Schwarz inequality, Young's inequality, and the energy norm \eqref{eq:energy-norm}, we obtain
\begin{align}
I & = (\tau_1h \nabla\cdot(\alpha\nabla u), L\rho_h - \llbracket B_{\alpha}^{\beta_-} [ \rho_h ]\rrbracket )_\mcO
\\
&\le (\tau_1h)^{1/2} \epsilon \| u\|_{H^2(\mcO)} \Big( (\tau_1h)^{1/2} \|L\rho_h\|_\mcO + (\tau_1 h)^{1/2} \| \llbracket B_{\alpha}^{\beta_-} [ \rho_h ]\rrbracket \|_\mcO   \Big)
\\
&\lesssim \epsilon^{1/2} h \| u \|_{H^{2}(\mcO)} \Big( (1 + h^{1/2})\tn \rho_h \tn_h     \Big)
\\ 
&\le 2\epsilon^{1/2}h  \| u \|_{H^{2}(\mcO)}\underbrace{\tn \pi_h u - u \tn_h}_{\le \tn u - \pi_h u \tn_{h,*}} + \big( 2\epsilon^{1/2}h  \| u \|_{H^{2}(\mcO)} \big)\tn  u - u_h \tn_h
\\ 
&\le ( 1 + \delta^{-1})4 \epsilon h^2 \| u \|^2_{H^{2}(\mcO)}+ \tn u - \pi_h u \tn_{h,*}^2  + \delta\tn u - u_h \tn^2_h
\end{align}
Here the second line uses
$\|\nabla\cdot(\alpha\nabla u)\|_\mcO
\lesssim (\|\alpha\|_{L^\infty(\mcO)}
+\|\nabla\alpha\|_{L^\infty(\mcO)})\|u\|_{H^2(\mcO)}
\lesssim\epsilon\|u\|_{H^2(\mcO)}$.
We also used the bounds in \eqref{eq:weight-diff-conv-bnd} and
\eqref{eq:alpha-beta-interface-bnd2}.

\proofparagraph{Term $II$.} We have
\begin{align}
II &= s_h (u, \pi_h u - u ) +  s_h (u, u - u_h ) 
\\
&\leq \| u \|_{s_h} \| \pi_h u - u \|_{s_h} + \|u\|_{s_h} \| u - u_h \|_{s_h}
\\
&\leq  \| u \|^2_{s_h} 
+ \underbrace{\| \pi_h u - u \|^2_{s_h}}_{\leq \tn u - \pi_h u \tn^2_{h,*}} 
+ \delta^{-1} \|u\|^2_{s_h} 
+ \delta\| u - u_h \|^2_{s_h}
\end{align}
Using kickback with $\delta>0$ small enough and the interpolation error bound \eqref{eq:interpol-energy}, we arrive at
\begin{align}
\tn u - u_h \tn_h^2 &\lesssim  \epsilon h^2\| u \|^2_{H^{2}(\mcO)} + \tn u - \pi_h u\tn^2_{h,*}  + \| u \|^2_{s_h} \\
&\lesssim \Lambda_e^2 h^2 \| u \|^2_{H^2(\mcO)} + h^3 \| u \|^2_{H^1(\mcO)} \lesssim \Lambda_e^2 h^2 \| u \|^2_{H^2(\mcO)}
\end{align}
where we used $\epsilon \le \Lambda_e^2$ in the last displayed inequality. To estimate the third term, note that the union of the elements in $\mcT_{h,d,i}$ is contained in an $O(h)$ tubular neighborhood of $\Omega_{d,i}$. The tubular-neighborhood stability estimate for normal extensions
\cite[Lemma~9.1]{BuHaLaMa16}, together with the boundedness of the
intrinsic prolongation, gives, for $d\ge1$,
\begin{equation}
\| \nabla_{\IR^n} u^e \|^2_{\mcT_{h,d,i}}
\lesssim
h^{n-d}\|u\|^2_{H^1(\Omega_{d,i})}
\end{equation}
For $d=0$, the left-hand side vanishes because the extension is constant. Hence
\begin{align}
\|u\|^2_{s_h}
&= \sum_{d=0}^n \sum_{i=1}^{n_d} \tau_2 h^{3 - (n-d)} \| \nabla_{\IR^n} u^e \|^2_{\mcT_{h,d,i}}
\\
&\lesssim \sum_{d=0}^n \sum_{i=1}^{n_d} h^3 \|u\|^2_{H^1(\Omega_{d,i})}
= h^3\|u\|^2_{H^1(\mcO)}
\end{align}
which completes the proof.
\end{proof}

\begin{rem}[Elementwise P\'eclet-Number Analysis]
\label{rem:local-peclet}
The estimates derived above use a quasi-uniform background mesh and
global coefficient bounds. A sharper analysis could instead use
genuinely local P\'eclet numbers, as in
\cite[Theorem~10 and Remark~11]{PiDrEr15}. Such estimates distinguish
regions dominated by diffusion, transition regions, and regions
dominated by convection element by element.

For an active element $T\in\mcT_{h,d,i}$ associated with a
positive-dimensional component, one could define
\begin{equation}
\label{eq:local-peclet}
Pe_{T,d,i}
:=
\frac{h_T\beta_{T,d,i}}{\epsilon_{T,d,i}},
\qquad
\beta_{T,d,i}
:=
\|\beta_{d,i}\|_{L^\infty(T\cap\Omega_{d,i})}
\end{equation}
where $\epsilon_{T,d,i}$ is a local lower bound for the tangential
diffusion tensor, with $Pe_{T,d,i}=\infty$ when the local diffusion
vanishes. The global GLS parameter would then be replaced by
\begin{equation}
\label{eq:local-gls-parameter}
\tau_{T,d,i}
\simeq
\min\{
\beta_{T,d,i}^{-1},
h_T\epsilon_{T,d,i}^{-1}
\}
\end{equation}
and the approximation and consistency estimates would be retained as
sums of locally weighted element contributions.

The local trace, inverse, and extension estimates used in the CutFEM
analysis suggest that such a localization is possible. A complete analysis would, however, require interface-compatible
choices of the local GLS weights $\tau_{T,d,i}$ on incident manifold
components, together with local estimates for the transfer terms
involving $\beta_\nu$ and $\alpha_{\nu\nu}$. Zero-dimensional components would
have to be controlled through their incident higher-dimensional
components. We therefore expect a local P\'eclet analysis to be
applicable, but do not pursue it here.
\end{rem}
\subsection{Error Bounds for Pure Convection Problems}\label{sec:errorbound_pureconvection}
Assume that the coercivity condition \eqref{eq:assumption-coeff} holds
and that the pure-convection problem admits a solution
$u\in H^2(\mcO)$. Let $u_h$ be the finite element solution defined by
\eqref{eq:fem} in the pure convection case
(cf.~\cite{BuHaLaLa19}), i.e., when $\alpha\equiv 0$. In this case, the modification in the
interpolation error bound \eqref{eq:interpol-energy} gives
\begin{equation}\label{eq:interpolbnd-pc}
\tn u - \pi_h u \tn^2_{h,*}\lesssim \Lambda_{I}^{pc} h^{3} \| u \|^2_{H^{2}(\mcO)}
\end{equation}
where
\begin{align}\label{eq:pureconv_lambda1}
    \Lambda_{I}^{pc} = \beta_\infty + \beta_\infty^{-1}\gamma_\infty^2 h^2 + 1
\end{align}
Further, we see that the term $I$ in the proof of Theorem \ref{thm:energy-error} is zero (as $Lu - \llbracket B_{\alpha}^{\beta_-} [ u ]\rrbracket - f = Lu - \llbracket |(\beta_\nu)_-| [ u ]\rrbracket - f = 0$), which leads to
\begin{equation}\label{eq:energyerrorbnd_pureconvection}
\tn u - u_h\tn_{h}\lesssim \Lambda_{e}^{pc}  h^{3/2} \| u \|_{H^{2}(\mcO)}  \qquad \text { where } \Lambda_{e}^{pc} =  (\Lambda_{I}^{pc} + 1)^{1/2}
\end{equation}

\subsection{Error Bounds with Low Regularity Solutions}\label{subsec:lowregularity}
The energy bound in Theorem~\ref{thm:energy-error} requires the continuous
solution $u$ to belong to $H^2(\mcO)$. In view of the lower-regularity
interpolation estimate \eqref{eq:interpol-energy_lowregulart}, we can
extend the energy analysis to $u\in H^s(\mcO)$ with $1\le s<2$.

\begin{thm}[Low-Regularity Energy-Error Bound]\label{thm:low-regularity}
Assume that the global ellipticity condition
\eqref{eq:elliptic-positivity} and the coefficient conditions
\eqref{eq:coefficients-behavior} hold with $\epsilon>0$, and that the
coercivity condition \eqref{eq:assumption-coeff} holds. Let
$u\in H^s(\mcO)$, $1\le s<2$, be the continuous solution to
\eqref{eq:weak-problem}, and let $u_h$ be the finite element solution
defined by \eqref{eq:fem}, with $\tau_1$ defined as in
\eqref{eq:weighting_paramtr}. Then, for sufficiently small $h$,
\begin{equation}
   \tn u - u_h \tn_h
   \lesssim \Lambda_{e}^{lr} h^{\sigma_e}
   \label{eq:lowregulrt_enrgythm}
\end{equation}
where $\sigma_e = \min\{1/2, s -1 \}$ and 
\begin{align} 
    \Lambda_{e}^{lr} & = h^{1/2-\sigma_e}\Lambda_1  + \Lambda_I h^{s-1-\sigma_e}\|u\|_{H^s(\mcO)} + h^{3/2-\sigma_e}\|u\|_{H^1(\mcO)}  \label{eq:low-regularity-bnd1}
    \\
    \Lambda_1 & =  \big(  \beta_\infty^{1/2}\| \nabla u\|_{\mcO} + ( 1 + \tau_1^{1/2}\gamma_\infty )\|u\|_{\alpha, \beta} + \tau_1^{1/2}\|f\|_\mcO  \big) \label{eq:low-regularity-bnd2}
\end{align}
with $\Lambda_I$ defined in \eqref{eq:energy-norm_paramtr}.
\end{thm}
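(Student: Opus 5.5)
We follow the structure of the proof of Theorem~\ref{thm:energy-error}. The difference is that, for $u\in H^s(\mcO)$ with $s<2$, the residual $Lu-\llbracket B_\alpha^{\beta_-}[u]\rrbracket-f$ is no longer bounded through $\nabla\cdot(\alpha\nabla u)$. The consistency term $I$ will therefore be handled by bounding the strong residual without second derivatives.

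\textbf{Starting inequality.} By coercivity \eqref{eq:Ah-coercivity}, the splitting $u-u_h=(u-\pi_hu)+\rho_h$ with $\rho_h=\pi_hu-u_h$, continuity \eqref{eq:Ah-continuity}, and the scheme \eqref{eq:fem}, we obtain as before
\[
\tn u-u_h\tn_h^2\lesssim \delta\tn u-u_h\tn_h^2+\delta^{-1}\tn u-\pi_hu\tn_{h,*}^2+I+II ,
\]
where $I=(\tau_1h(Lu-\llbracket B_\alpha^{\beta_-}[u]\rrbracket-f),L\rho_h-\llbracket B_\alpha^{\beta_-}[\rho_h]\rrbracket)_\mcO$ and $II=s_h(u^e,\rho_h)$.

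\textbf{Routine terms.} The interpolation term is bounded by \eqref{eq:interpol-energy_lowregulart}, giving $\Lambda_I^2h^{2(s-1)}\|u\|^2_{H^s(\mcO)}$. Term $II$ is treated exactly as before: $II\lesssim\|u\|_{s_h}^2+\tn u-\pi_hu\tn_{h,*}^2+\delta\tn u-u_h\tn_h^2$, and the tubular-neighborhood estimate gives $\|u\|_{s_h}^2\lesssim h^3\|u\|^2_{H^1(\mcO)}$, which needs only $H^1$.

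\textbf{Term $I$ (main obstacle).} We do not use the equation to replace the residual by $\nabla\cdot(\alpha\nabla u)$. Instead we bound the two factors separately by Cauchy--Schwarz and estimate $\|Lu\|$, $\|\llbracket\cdot\rrbracket\|$ and $\|f\|$ individually. For the first factor,
\[
(\tau_1h)^{1/2}\|Lu-\llbracket B_\alpha^{\beta_-}[u]\rrbracket-f\|_\mcO
\le (\tau_1h)^{1/2}\big(\beta_\infty\|\nabla u\|_\mcO+\|\llbracket\beta_\nu[u]\rrbracket\|_\mcO+\gamma_\infty\|u\|_\mcO+\|\llbracket B_\alpha^{\beta_-}[u]\rrbracket\|_\mcO+\|f\|_\mcO\big).
\]
We then use $\tau_1\beta_\infty\le c_\tau$ from \eqref{eq:weight-diff-conv-bnd}, together with \eqref{eq:beta-interfacebnd} and \eqref{eq:alpha-beta-interface-bnd2} applied to $u$. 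This turns the bracket into $h^{1/2}\Lambda_1$, with $\Lambda_1$ as in \eqref{eq:low-regularity-bnd2}: the gradient term contributes $\beta_\infty^{1/2}\|\nabla u\|_\mcO$, the jump terms contribute $\|u\|_{\alpha,\beta}$, and the $\gamma$ and $f$ terms keep their $\tau_1^{1/2}$ weights. For the second factor, $(\tau_1h)^{1/2}\|L\rho_h-\llbracket B_\alpha^{\beta_-}[\rho_h]\rrbracket\|_\mcO\lesssim\tn\rho_h\tn_h\le\tn u-\pi_hu\tn_{h,*}+\tn u-u_h\tn_h$. Young's inequality then gives
\[
I\lesssim \delta^{-1}h\Lambda_1^2+\tn u-\pi_hu\tn_{h,*}^2+\delta\tn u-u_h\tn_h^2 .
\]
This crude bound is consistent only at order $h^{1/2}$, not $h^{s-1}$. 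It is the source of the cap $\sigma_e\le1/2$, and it is the step I expect to require the most care.

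\textbf{Conclusion.} Kicking back the $\delta$ terms yields
\[
\tn u-u_h\tn_h\lesssim h^{1/2}\Lambda_1+\Lambda_Ih^{s-1}\|u\|_{H^s(\mcO)}+h^{3/2}\|u\|_{H^1(\mcO)} .
\]
Factoring out $h^{\sigma_e}$ with $\sigma_e=\min\{1/2,s-1\}$ gives exactly $\Lambda_e^{lr}h^{\sigma_e}$ as in \eqref{eq:low-regularity-bnd1}. All residual powers of $h$ in $\Lambda_e^{lr}$ are nonnegative, so no term is lost in this step.
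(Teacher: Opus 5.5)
Your proposal is correct and follows the paper's proof essentially step for step. Term $I$ is bounded by Cauchy--Schwarz with the residual split into $Lu$, the trace-sum term and $f$, giving the factor $h^{1/2}\Lambda_1$ via \eqref{eq:directionalderivbound}, \eqref{eq:beta-interfacebnd}, \eqref{eq:alpha-beta-interface-bnd2} and \eqref{eq:weight-diff-conv-bnd}; term $II$ and the interpolation term are handled as in Theorem~\ref{thm:energy-error} together with \eqref{eq:interpol-energy_lowregulart}; and kickback yields the same bound $\tn u-u_h\tn_h^2\lesssim\Lambda_1^2h+\Lambda_I^2h^{2(s-1)}\|u\|_{H^s(\mcO)}^2+h^3\|u\|_{H^1(\mcO)}^2$.
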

\begin{proof}
We bound the term $I$ in the same way as in the proof of Theorem \ref{thm:energy-error}, but without using the $\|\cdot\|_{H^{2}(\mcO)}$-norm, i.e., without involving the second-order term in $A_\alpha u$, as follows:
\begin{align}
    I & = (\tau_1h(Lu - \llbracket B_{\alpha}^{\beta_-} [ u ]\rrbracket -f), L(\pi_h u - u_h) - \llbracket B_{\alpha}^{\beta_-} [ \pi_h u - u_h ]\rrbracket )_\mcO
    \\
    &\le \Big(  (\tau_1h)^{1/2}\|Lu - \llbracket B_{\alpha}^{\beta_-} [ u ]\rrbracket -f \|_\mcO \Big)
    \\ \nonumber
    & \qquad \Big( (\tau_1h)^{1/2} \|L(\pi_h u - u_h) - \llbracket B_{\alpha}^{\beta_-} [ \pi_h u - u_h ]\rrbracket\|_\mcO \Big)
    \\
    &\lesssim h^{1/2}\Big( \beta_\infty^{1/2}\| \nabla u\|_{\mcO} + ( 1 + \tau_1^{1/2}\gamma_\infty )\|u\|_{\alpha, \beta} + \tau_1^{1/2}\|f\|_\mcO \Big)
    \\
    & \qquad \Big(   (1 + h^{1/2} ) \tn \pi_h u - u_h \tn_h \Big) \nonumber
    \\
    &\le (1+\delta^{-1}) \Lambda_1^2 h + \tn u - \pi_h u \tn_{h,*}^2 + \delta\tn u - u_h \tn^2_h
\end{align}
where we have used the directional derivative bound in \eqref{eq:directionalderivbound} and the inequalities \eqref{eq:alpha-interfacebnd}--\eqref{eq:beta-interfacebnd} and \eqref{eq:weight-diff-conv-bnd}.
The bound for the term $II$ is the same as in the proof of Theorem \ref{thm:energy-error}. Collecting these bounds together with the interpolation estimate \eqref{eq:interpol-energy_lowregulart} and applying kickback for sufficiently small positive $\delta$ finally yields
\begin{equation}\label{eq:low-regularity-energy-proof}
\tn u - u_h \tn^2_h\lesssim \Lambda_1^2 h + \Lambda_I^2 h^{2(s-1)} \| u \|^2_{H^{s}(\mcO)} +  h^{3} \|u\|^2_{H^{1}(\mcO)}
\end{equation}
which completes the proof of \eqref{eq:lowregulrt_enrgythm}.
\end{proof}

\begin{rem}\label{rem:low-regularity}
\proofparagraph{(i).}
If $s\in [1,3/2]$, so that $\sigma_e = \min\{1/2,s-1\} = s-1$, then the energy error bound in \eqref{eq:lowregulrt_enrgythm} is of order $O(h^{s-1})$. This agrees with \cite[Theorem~3.1]{BuHaLa24}, where low-regularity CutFEM approximations of an elliptic problem with mixed boundary conditions are analyzed.

\proofparagraph{(ii).} For the pure diffusion problem, i.e., when
$\beta\equiv\kappa\equiv0$, neither the coefficient scaling
\eqref{eq:coefficients-behavior} nor the coercivity condition
\eqref{eq:assumption-coeff} is satisfied, and
Theorem~\ref{thm:low-regularity} therefore does not apply as stated.
An analogous estimate would require coercivity from an appropriate
mixed-dimensional Poincaré inequality, as discussed in
Remark~\ref{rem:coercivity-poincare}; that extension is not established
here.

\proofparagraph{(iii).} Assume that \eqref{eq:assumption-coeff} holds
and that the pure-convection problem admits a solution
$u\in H^s(\mcO)$, $1\le s\le2$. The modification of the interpolation
error bound \eqref{eq:interpol-energy_lowregulart} then results in
\begin{align}\label{eq:interpolenergy-lowreg-pc}
  \tn u - \pi_h u \tn^2_{h,*}\lesssim \Lambda_I^{pc} h^{2(s-1/2)} \| u \|^2_{H^{s}(\mcO)} \qquad 1\le s\le 2
\end{align}
with $\Lambda_I^{pc}$ defined in \eqref{eq:pureconv_lambda1}. 
Proceeding as in the proof of
Theorem~\ref{thm:low-regularity}, but using
\eqref{eq:interpolenergy-lowreg-pc}, leads to
\begin{equation}
   \tn u - u_h \tn_h
   \lesssim \Lambda_{e}^{pc} h^{(s-1/2)}\|u\|_{H^s(\mcO)}
   \label{eq:energy-lowreg-pc}
\end{equation}

\end{rem}

\section{Numerical Results}
\label{sec:numerics}

This section presents numerical experiments that assess the energy-error rates proved in Section \ref{sec:errorbounds} and illustrate the behavior of the method. Cases~I and III represent, respectively, the globally diffusive and globally pure-convection regimes covered by the analysis (the latter conditionally on the assumed regular solution). Case~II and the low-regularity example have diffusion in the bulk but not in the fractures and therefore lie outside the two global regimes analyzed here; their convergence rates are reported as empirical observations. Case~IV lies outside the coercivity assumption \eqref{eq:assumption-coeff}. All $L^2$-rates in Sections \ref{sec:convergence}--\ref{sec:convergence-lowregularity} are empirical, since no $L^2$-error estimate is established here. Section \ref{sec:illustration} contains additional numerical illustrations.

\subsection{Implementation}

The CutFEM is implemented in MATLAB in two space dimensions and the linear system of equations is solved using a direct solver (MATLAB's \verb+\+ operator). We refer to \cite{MR3682761} for implementational details.

\paragraph{Approximation Spaces.} We generate a background triangular quasi-uniform mesh $\mcT_{h}^0$ of $\Omega^0$ with mesh size $h$ and construct a continuous piecewise linear finite element space on $\mcT_{h}^0$. Since the background mesh is generated independently of the geometry, each manifold component $\Omega_{d,i}$ may intersect the mesh in an arbitrary way. Restricting the basis functions to the active mesh \eqref{eq:mesh-active} of each manifold component, i.e., the elements of $\mcT_h^0$ that intersect the manifold component, gives the finite element space for that component. Note that in this construction the approximation spaces for all components are in $\IR^n$, regardless of the dimension of the manifold components, see Figure~\ref{fig:meshes}. Also, the approximation spaces for the components are not coupled, since all coupling is naturally enforced weakly in our formulation.

\paragraph{Parameter Values.} The least-squares stabilization constant is
chosen as $c_\tau=1$. For $\epsilon>0$ and $\beta_\infty>0$, we take
$\tau_1=c_\tau\min\{\beta_\infty^{-1},h\epsilon^{-1}\}$. When the global
diffusion scale is zero, including Case~II, Case~III, and the low-regularity
example, we instead take $\tau_1=c_\tau\beta_\infty^{-1}$. In the
low-regularity case, i.e., for
$u\in H^s(\mcO)$, we take $c_\tau=25$. We observe no particular
sensitivity with respect to $c_\tau$ for the case $u\in H^2(\mcO)$.
Throughout this section, we fix the full gradient stabilization parameter
at $\tau_2=10^{-3}$.

\paragraph{Reported Norms.}
Throughout this section, the reported energy errors are evaluated using the
actual componentwise diffusion coefficients. Thus, for $e=u-u_h$, the terms
$\epsilon\|\nabla e\|_{\mcO}^2$ and
$\epsilon\|[e]\|_{\partial\mcO}^2$ in
\eqref{eq:continuous-norm} are replaced by
$(\alpha\nabla e,\nabla e)_{\mcO}$ and
$\|\alpha_{\nu\nu}^{1/2}[e]\|_{\partial\mcO}^2$, respectively, while the
remaining terms are unchanged. Under the global
uniform-diffusion assumptions, this coefficient-weighted norm is equivalent
to \eqref{eq:continuous-norm}; in the partially degenerate examples, the
reported rates remain empirical.

\subsection{Convergence}\label{sec:convergence}
All experiments in this subsection are performed on the two-dimensional domain $\Omega=[0,1]^2$. Thus, $\Omega$ may consist of bifurcation points ($d=0$), fractures ($d=1$), and bulk domains ($d=2$). With the exception of Case IV, the examples are constructed so that the exact solutions satisfy condition \eqref{eq:assumption-coeff} for suitable choices of the reaction coefficient and tangential vector field.

The flow regimes and energy-error rates used for theoretical comparison
are summarized in Table~\ref{table:overview}.

\begin{table}
    \scriptsize
    \caption{\emph{Rates used for theoretical comparison.} Flow regimes
    on the manifolds and energy-error rates in the analyzed cases.}
    \label{table:overview}
    \centering
    \begin{tabular}{ c  c  c  c  c }
    \toprule 
    Cases & \multicolumn{3}{c}{Manifolds} & Analysis rate
    \\\cmidrule(lr){2-4}\cmidrule(lr){5-5}
    & $d=2$ &  $d=1$ & $d=0$ & Energy-OC \\
    \midrule
     I &  convection-diffusion  &  convection-diffusion   &  $-$  & $1.5$ \\
    II &  convection-diffusion  &  convection     &  $-$   &  $-$ \\
    III & convection   &   convection   &  coupling terms   & $1.5$ \\
    \bottomrule
    \end{tabular}
    \end{table}

The dash in the energy-bound column for Case~II indicates that its
partially degenerate diffusion configuration is not covered by the
present analysis.
For Case~I, all tabulated meshes satisfy
$\beta_\infty h\ge\epsilon$. In this convection-dominated range,
\eqref{eq:energy-norm_paramtr} gives $\Lambda_I=O(h^{1/2})$, and
Theorem~\ref{thm:energy-error} therefore gives an $O(h^{3/2})$ energy
bound. For fixed positive $\epsilon$, the asymptotic bound becomes
$O(h)$ once $\beta_\infty h\le\epsilon$. No optimality claim is made
for either regime.

\paragraph{Case I: Convection-Diffusion in Bulk and Fracture.} In this case, a single fracture divides the unit square into two parts so that we have one fracture and two bulk domains, see Figure~\ref{pic:cd_bulkcrack}. We consider $\alpha_{2, 1} = \alpha_{2, 2} = \epsilon \bfI_{2\times 2}$ and $\alpha_{1, 1} = \epsilon$,  where $\epsilon = 10^{-5}$ and $\bfI_{2\times 2}$ is the $2\times 2$ identity matrix. The bulk vector fields $\{\beta_{2,i}\}$ are directed into the fracture so that the fracture solution is influenced by the bulk solutions but not vice versa.

The bulk solutions are constructed so as to satisfy the Robin interface conditions on the fracture. For appropriate choices of the source functions $f_{d,i}$, boundary data $g_{d,i}$, and reaction coefficients $\kappa_{d,i}$, the manufactured exponential solutions are given by
\begin{align}\label{numer_exactsolns}
    u_{2,1} = e^{ (x-1/2) + y}, \quad u_{2,2} = e^{ -(x-1/2) + y}, \quad \text{and} \quad  u_{1,1} = 2e^{y}
\end{align}
The exact solution and the numerical solution for $h=0.2$ are shown in Figures~\ref{sol:exact}--\ref{sol:numer}.
\begin{figure}
\centering
\begin{subfigure}[t]{.30\linewidth}\centering
\includegraphics[width=0.9\linewidth]{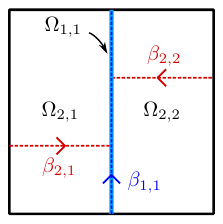}
\caption{Set-up} \label{pic:cd_bulkcrack}
\end{subfigure}
\begin{subfigure}[t]{.30\linewidth}\centering
\includegraphics[width=0.9\linewidth,trim=0 0 55 0,clip]{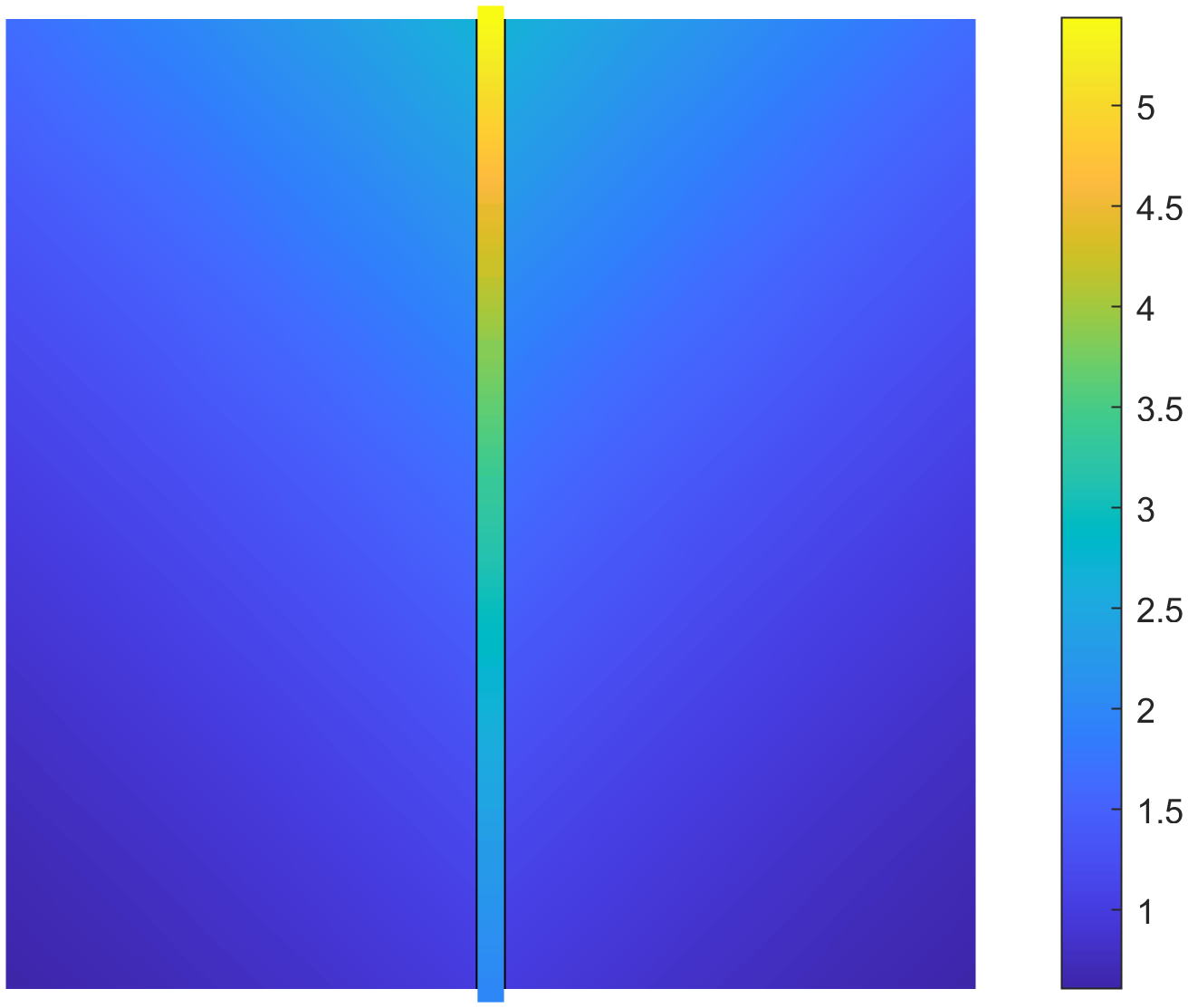}
\caption{Exact solution}
\label{sol:exact}
\end{subfigure}
\begin{subfigure}[t]{.30\linewidth}\centering
\includegraphics[width=0.9\linewidth,trim=0 0 55 0,clip]{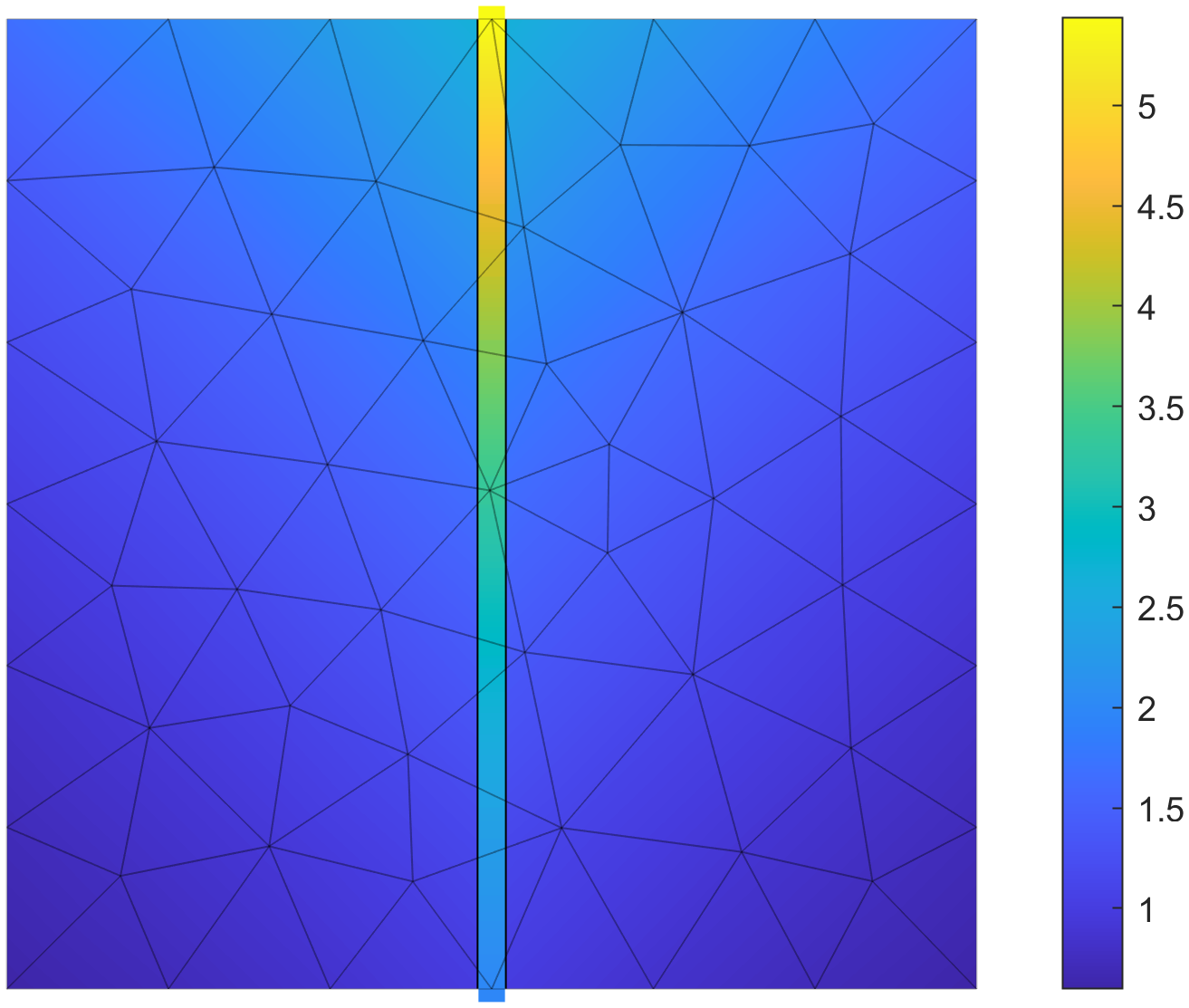}
\caption{Numerical solution} 
\label{sol:numer}
\end{subfigure}
\caption{\emph{Case I: Convection-diffusion in bulk and fracture.} (a) In this set-up: bulk diffusion coefficients $\alpha_{2, 1} = \alpha_{2, 2} = \epsilon \bfI_{2\times 2}$ and fracture diffusion coefficient $\alpha_{1, 1} = \epsilon$,  where $\epsilon = 10^{-5}$; bulk vector fields $\beta_{2,1}=[1,0]$, $\beta_{2,2}=[-1,0]$; fracture vector fields $\beta_{1,1}=[0,1]$. (b) Exact solution. (c) Numerical solution with $h=0.2$.}

\end{figure}
The empirical findings in Table~\ref{table:cd_bulkcrack} demonstrate
approximately $O(h^{3/2})$ and quadratic convergence in the energy and
$L^2$-norms, respectively. The observed energy rate agrees with the
bound for the convection-dominated range used here, while the $L^2$
rate is reported empirically.

\begin{table}
\scriptsize
\caption{\emph{Case I: Convection-diffusion in bulk and fracture.} Experimental errors and convergence rates for the convection-dominated regime.}
\centering
\label{table:cd_bulkcrack}
\begin{tabular}{ c    c c c c } 
\toprule
$h$ &  Errors in energy norm  & Energy-OC & Errors in $L^2$-norm  &  $L^2$-OC  \\
\midrule
1/5  & \na{7.45212e-2}   &  $-$  &  \na{7.24865e-3}  & $-$     \\
1/10 & \na{2.22754e-2}   &  1.74220     &  \na{1.16855e-3}  &  2.63298 \\ 
1/20 & \na{8.13849e-3}    &   1.45262    &  \na{3.03294e-4}  &  1.94594 \\ 
1/40 & \na{2.72321e-3}    &   1.57945    &  \na{7.48028e-5}  &  2.01955 \\
1/80 & \na{8.95915e-4}     &  1.60388     &  \na{1.69896e-5}  &  2.13844  \\ 
\midrule
Analysis rate &  &  1.5    & &   $-$ \\
\bottomrule
\end{tabular}
\end{table}

\paragraph{Case II: Convection-Diffusion in Bulk and Pure Convection in Fracture.}
Here we consider the same domain and convection parameters as in Case~I,
but take
$\alpha_{2,1}=\alpha_{2,2}=\epsilon_{\mathrm{bulk}}\bfI_{2\times2}$,
with $\epsilon_{\mathrm{bulk}}=10^{-5}$, and $\alpha_{1,1}=0$. Thus, we
have convection--diffusion in the bulk domains and pure convection in
the fracture, while the flow in the fracture is still affected by
diffusion in the bulk domains. The manufactured exact
solutions are the same as in \eqref{numer_exactsolns}. Since diffusion is
positive only on some components, this case satisfies neither the global
ellipticity assumptions of the diffusive analysis nor $\alpha\equiv0$.
The findings in Table~\ref{table:cdbulk_ccrack} are therefore entirely
empirical, although they are very similar to those in
Table~\ref{table:cd_bulkcrack}.

\begin{table}
\scriptsize
\caption{\emph{Case II: Convection-diffusion in bulk and pure convection in fracture.} Experimental errors and convergence rates for the convection-dominated regime.}
\centering
\label{table:cdbulk_ccrack}
\begin{tabular}{ c   c c c c } 
\toprule
\rule{0pt}{3ex} 
$h$  & Errors in energy norm  & Energy-OC & Errors in $L^2$-norm  &  $L^2$-OC  \\
\midrule
\multirow{4}{*}{}
1/5    & \na{7.45205e-2}  & $-$ & \na{7.24929e-3}  & $-$ \\
1/10   & \na{2.22748e-2}  & 1.74222    & \na{1.16867e-3}  &  2.63297  \\ 
1/20   & \na{8.13796e-3}  & 1.45268   & \na{3.03284e-4}  &  1.94613  \\ 
1/40   & \na{2.72287e-3}  & 1.57954    & \na{7.48169e-5}  &  2.01924  \\
1/80   & \na{8.95660e-4}  & 1.60411    & \na{1.69936e-5}  &  2.13837  \\ 
\bottomrule
\end{tabular}
\end{table}

\paragraph{Case III: Pure Convection in Bulk and Fractures.} In contrast to the previous cases, for the set-up in Figure~\ref{purely-convection} we consider the purely convective case ($\alpha= 0$) in four bulk domains, four fractures, and one bifurcation point at $(1/2,1/2)$ where the fractures split. We have three fractures with in-flow and the rest with out-flow. With the appropriate choices of the problem data the algebraic solutions are constructed as
\begin{alignat}{2}\label{pureconv_exactsolns}
    & u_{2,1} = x^2 + y^2,\qquad &&u_{2,2} = 1 + (x - 1/2)^2 + (y - 1/2)^2
    \\
    &u_{2,3} = -(x^2 + y^2), \qquad &&u_{2,4} = x^2 - y^2
    \\
    & u_{1,1} = 3/4 + (y-1)^2, \qquad &&u_{1,2} = 1 + (x - 1/2)^2
    \\
    & u_{1,3} = 1 + (y - 1/2)^2, \qquad &&u_{1,4} = 1 - (x - 1/2)^2
    \\&
    u_{0,1} = 1
\end{alignat}
The exact solution and the numerical solution for $h=0.2$ are shown in
Figures~\ref{pc_exactsol}--\ref{pc_numersol}. The numerical results with
$\tau_1=c_\tau\beta_{\infty}^{-1}=1/\sqrt2$ are reported in
Table~\ref{table:purelyconvection}. They complement the numerical
illustrations in \cite[Section~5]{BuHaLaLa19} and are consistent with
the energy estimate in Subsection~\ref{sec:errorbound_pureconvection}.
The $L^2$ rate is reported empirically.

\begin{figure}
\centering
\begin{subfigure}[t]{.30\linewidth}\centering
\includegraphics[width=0.9\linewidth]{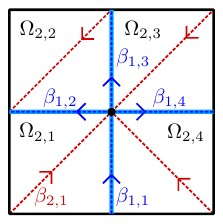}
\caption{Set-up}\label{purely-convection}
\end{subfigure}
\begin{subfigure}[t]{.30\linewidth}\centering
\includegraphics[width=0.9\linewidth,trim=0 0 55 0,clip]{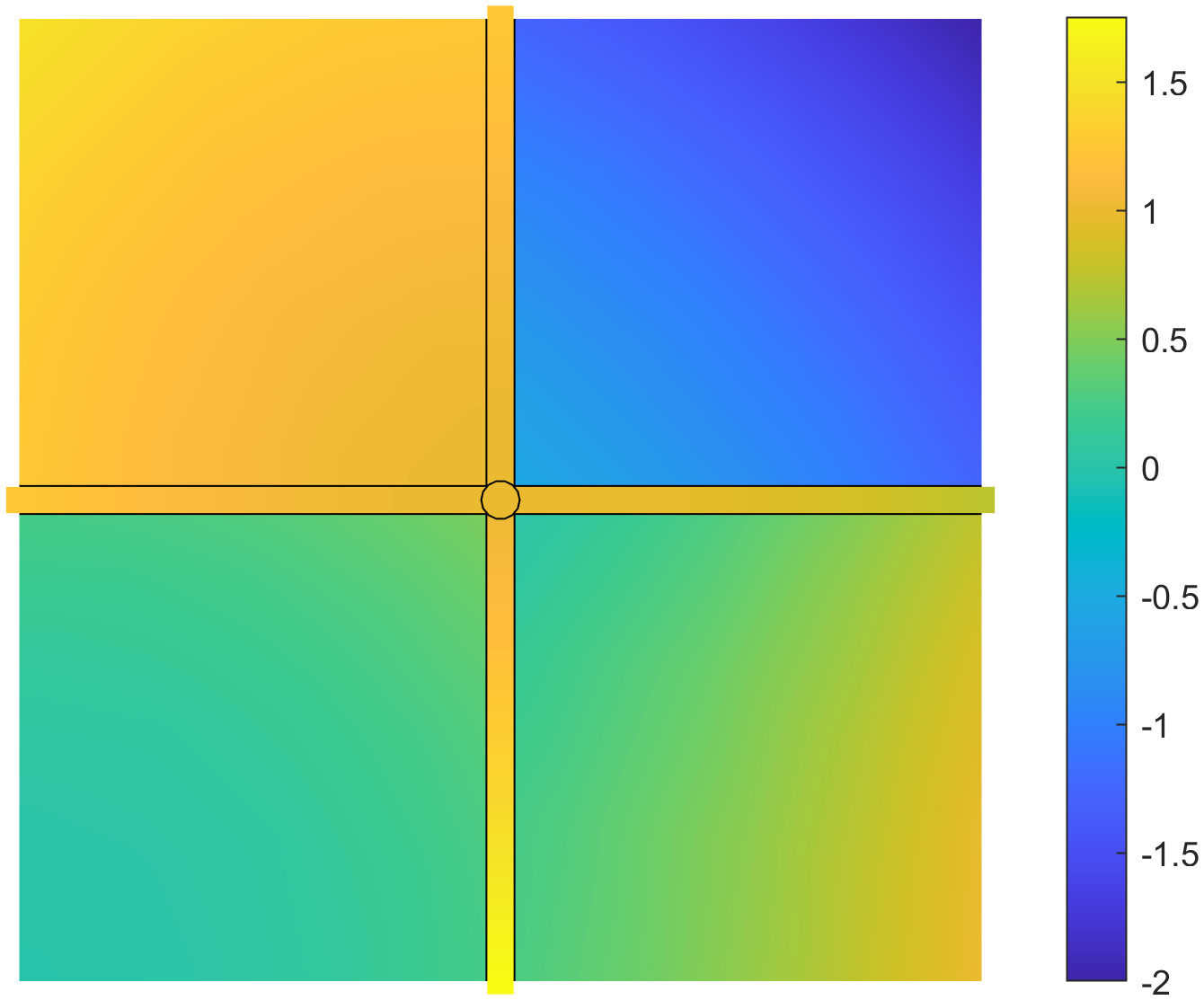}
\caption{Exact solution}\label{pc_exactsol}
\end{subfigure}
\begin{subfigure}[t]{.30\linewidth}\centering
\includegraphics[width=0.9\linewidth,trim=0 0 55 0,clip]{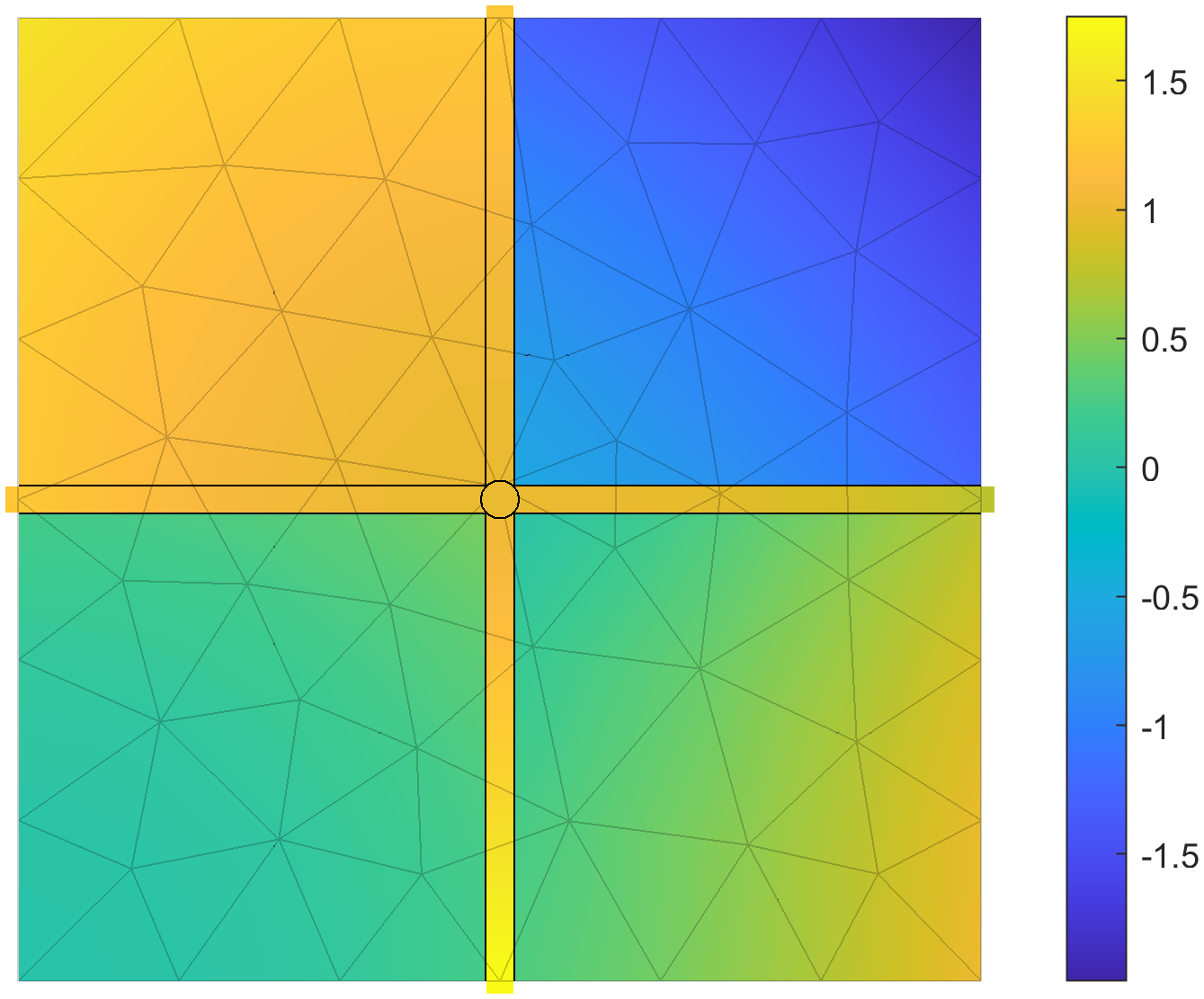}
\caption{Numerical solution}\label{pc_numersol}
\end{subfigure}
\caption{\emph{Case III: Pure convection in bulk and fractures.} (a) In this set-up: bulk vector fields $\beta_{2,i}$ are $[1,1]$, $[-1,-1]$, $[-1,-1]$, $[-1,1]$; fracture vector fields $\beta_{1,i}$ are $[0,1]$, $[-1,0]$, $[0,1]$, $[1,0]$. (b) Exact solution. (c) Numerical solution when $h=0.2$.}
\end{figure}

\begin{table}
\scriptsize
\caption{\emph{Case III: Pure convection in bulk and fractures.} Experimental errors and convergence rates for pure convection.}
\centering
\label{table:purelyconvection}
\begin{tabular}{ c   c c c c } 
\toprule
\rule{0pt}{3ex} 
$h$  & Errors in energy norm  & Energy-OC & Errors in $L^2$-norm  &  $L^2$-OC  \\
\midrule
\multirow{4}{*}{}
1/5    & \na{4.80419e-2}   & $-$   & \na{3.51324e-3}    & $-$  \\
1/10   & \na{1.73241e-2}   &  1.47151     & \na{7.47837e-4}    &  2.23201  \\ 
1/20   & \na{6.15353e-3}   &  1.49329     & \na{2.13119e-4}    &  1.81106 \\ 
1/40   & \na{2.07016e-3}   &  1.57167     & \na{5.01863e-5}    &  2.08630  \\
1/80   & \na{6.97887e-4}   &  1.56868     & \na{1.23101e-5}    &  2.02745 \\ 
\midrule
Analysis rate & & 1.5 & & $-$ \\
\bottomrule
\end{tabular}
\end{table}

\paragraph{Case IV: Example Violating (\ref{eq:assumption-coeff}).}
We also present a test outside assumption \eqref{eq:assumption-coeff}.
For small $\epsilon$, condition \eqref{eq:assumption-coeff} can be
relaxed in standard settings (cf.~\cite{navert1982}). Motivated by this,
we take $\epsilon=10^{-10}$ and
$\kappa_{2,1}=\kappa_{2,2}=\kappa_{1,1}=0$ in the geometry of Case~I.
Thus $2\kappa+\Div\beta=0$ in $\Omega_{2,1}$ and $\Omega_{2,2}$,
whereas $2\kappa+\Div\beta=-2$ in $\Omega_{1,1}$. The empirical results
in Table~\ref{table:zeroreaction} show strong observed rates in both
norms. Since the example lies outside the assumptions of the analysis,
these results are numerical evidence rather than a verification of the
theoretical bounds.

\begin{table}
\scriptsize
\caption{\emph{Case IV: Example violating (\ref{eq:assumption-coeff}).}
Experimental errors and convergence rates for the convection-dominated regime with $\epsilon = 10^{-10}$ and zero reaction terms.}
\centering
\label{table:zeroreaction}
\begin{tabular}{ c   c c c c } 
\toprule
\rule{0pt}{3ex} 
$h$  & Errors in energy norm  & Energy-OC & Errors in $L^2$-norm  &  $L^2$-OC  \\
\midrule
\multirow{4}{*}{}
1/5    & \na{7.50180e-2}   & $-$   &  \na{8.50034e-3}   & $-$ \\
1/10   & \na{2.23190e-2}   &  1.74896   &  \na{1.29867e-3}   &  2.71048   \\ 
1/20   & \na{8.14348e-3}   &  1.45456   &  \na{3.25722e-4}   &  1.99533  \\ 
1/40   & \na{2.72315e-3}   &  1.58037   & \na{7.88389e-5}    &  2.04666  \\
1/80   &  \na{8.95429e-4}  &  1.60463   &  \na{1.78599e-5}   &  2.14219 \\ 
\bottomrule
\end{tabular}
\end{table}

\subsection{Convergence Test with Low Regularity Solution}\label{sec:convergence-lowregularity}
In this subsection, we consider $\Omega = [-1,1]^2$ and the set-up illustrated in Figure~\ref{pic:pc}. Similar to Case III, the geometry consists of four bulk domains, four fractures, and one bifurcation point at the center. Here, however, we consider convection-diffusion in the bulk domains and pure convection in the fractures. We take $\alpha_{2,i} = \epsilon_{\mathrm{bulk}} \bfI_{2\times 2}$ and $\alpha_{1,i} = 0$, where $\epsilon_{\mathrm{bulk}} = 10^{-5}$ and $\bfI_{2\times 2}$ is the $2\times 2$ identity matrix. For simplicity, all fractures are taken with inflow, so that all fracture solutions are affected by the bulk solutions but not vice versa. For suitable choices of the problem data, the solutions in polar coordinates are given by
\begin{align}\label{lowregulrt_exactsolns}
    & u_{2,1} = r^{5/3}\sin(2\theta),\quad u_{2,2} = - r^{5/3}\sin(2\theta), \quad u_{2,3} = u_{2,1}, \quad u_{2,4} =  u_{2,2} \\
    & u_{1,1} =  u_{1,2} =  u_{1,3} = u_{1,4} = - 2r^{2/3} \\
    & u_{0,1} = 0
\end{align}
Due to the Robin-type interface conditions and the singularity at the bifurcation point $(0,0)$, we cannot consider convection-diffusion in the fractures and therefore restrict attention to the purely convective case there. If diffusion were included in the fractures, then the source terms would become $f_{1,i} = 4r^{-4/3}/9$, which do not belong to $L^2(\Omega_{1,i})$. Although there is no diffusion in the fractures, the coupling terms involving the diffusion coefficients of the bulk domains are still present.

This partially degenerate diffusion configuration is outside the two
global coefficient regimes analyzed in Section~\ref{sec:errorbounds}.
The bulk solutions satisfy
$u_{2,i}\in H^{2+2/3-\delta}(\Omega_{2,i})$, whereas the fracture
solutions satisfy
$u_{1,i}\in H^{1+1/6-\delta}(\Omega_{1,i})$ for every $\delta>0$,
cf.~\cite[Example~2.16]{ErnGuermond-I}.
For comparison, a heuristic application of Remark~4.2(iii) to an isolated pure-convection fracture suggests the reference rate \(s-\frac12=\frac23-\delta\), arbitrarily close to \(2/3\).

\begin{figure}
\centering
\begin{subfigure}[t]{.30\linewidth}\centering
\includegraphics[width=0.9\linewidth]{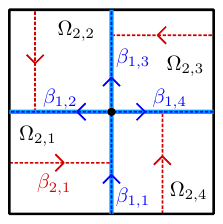}
\caption{Set-up} \label{pic:pc}
\end{subfigure}
\begin{subfigure}[t]{.30\linewidth}\centering
\includegraphics[width=0.9\linewidth,trim=0 0 55 0,clip]{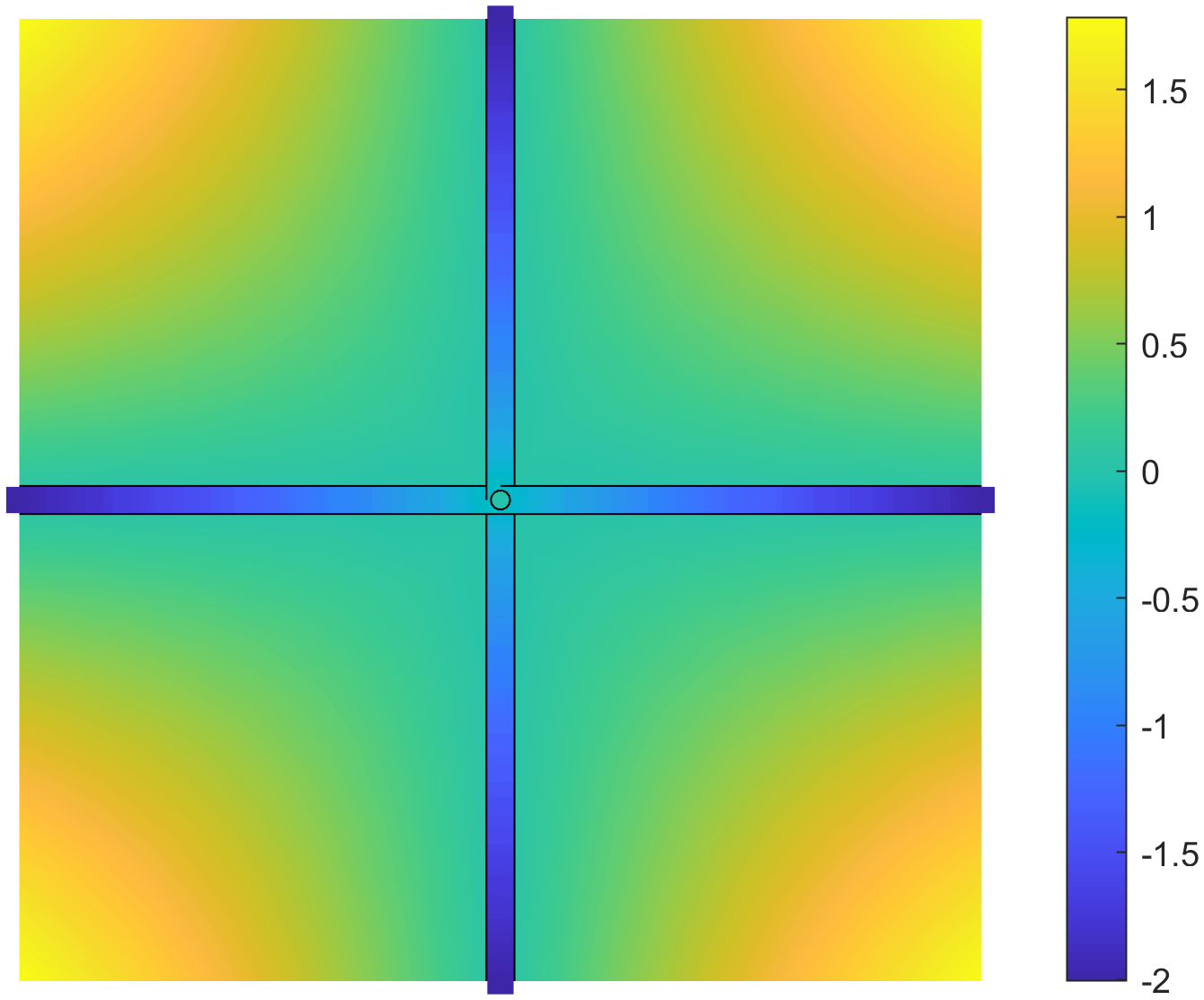}
\caption{Exact solution}\label{pc:lr_exactsol}
\end{subfigure}
\begin{subfigure}[t]{.30\linewidth}\centering
\includegraphics[width=0.9\linewidth,trim=0 0 55 0,clip]{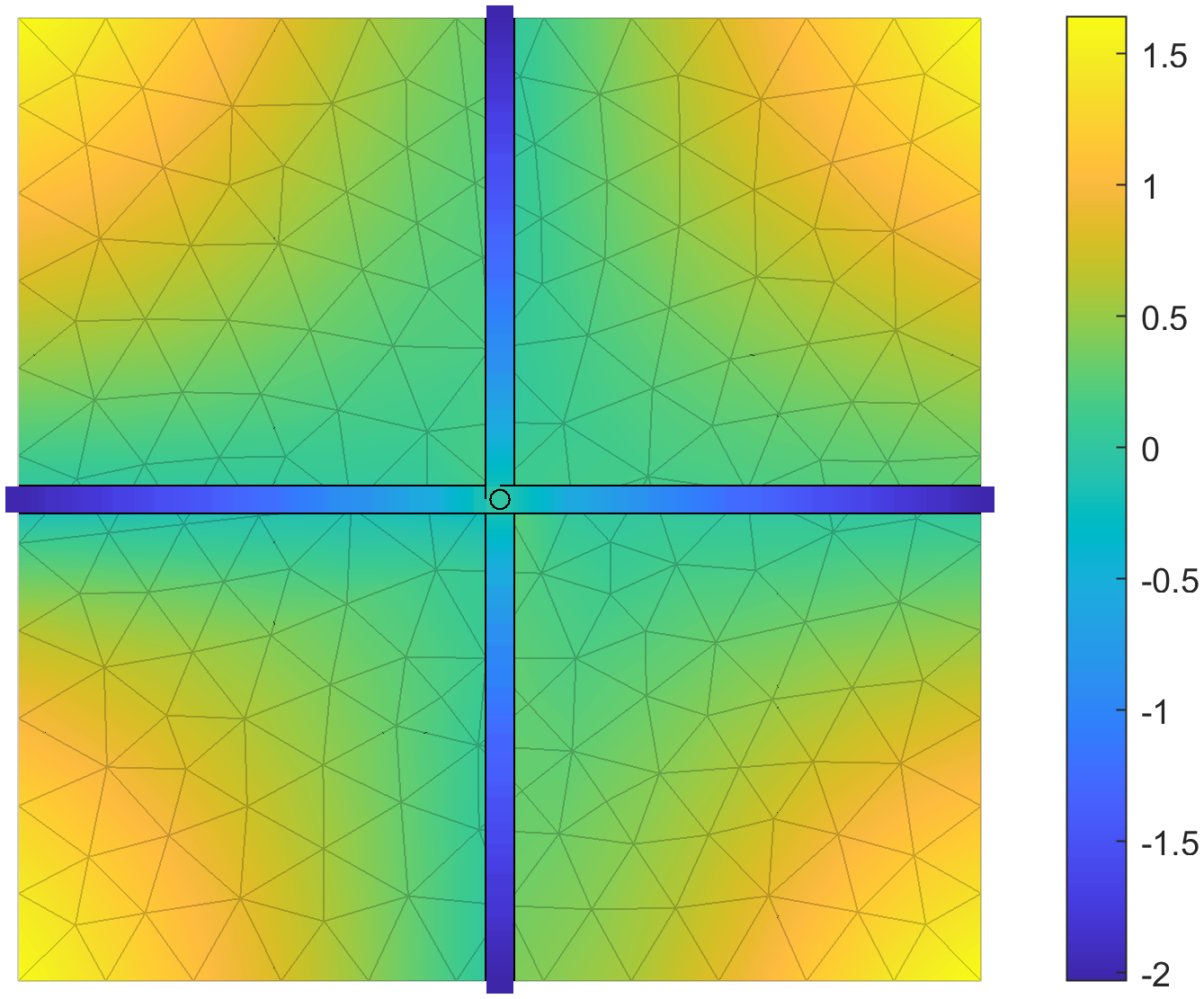}
\caption{Numerical solution}\label{pc:lr_numersol}
\end{subfigure}
\caption{\emph{Low regularity example.} (a) In this set-up: bulk diffusion coefficients $\alpha_{2,i} = \epsilon_{\mathrm{bulk}} \bfI_{2\times 2}$ with $\epsilon_{\mathrm{bulk}} = 10^{-5}$; fracture diffusion coefficient $\alpha_{1,i} = 0$; bulk vector fields $\beta_{2,i}$ are $[1,0], [0, -1], [-1, 0], [0, 1]$; fracture vector fields $\beta_{1,i}$ are $[0,1], [-1,0], [0,1], [1,0]$. (b) Exact solution. (c) Numerical solution when $h=0.2$.}

\end{figure}
The exact and numerical solutions for $h=0.2$ are shown in
Figures~\ref{pc:lr_exactsol}--\ref{pc:lr_numersol}. The empirical
findings in Table~\ref{table:lowregularity-convectiondom} are close to
the heuristic fracture rate $0.66$. Neither the energy nor the $L^2$
rate has a theoretical comparison for this coupled problem.

\begin{table}
\scriptsize
\caption{\emph{Low regularity example.} Experimental errors and convergence rates for the convection-dominated regime with a low regularity solution.}
\centering
\label{table:lowregularity-convectiondom}
\begin{tabular}{ c   c c c c } 
\toprule
$h$  & Errors in energy norm  & Energy-OC & Errors in $L^2$-norm  &  $L^2$-OC \\
\midrule
\multirow{4}{*}{}
1/5    & \na{12.2765e-1}  & $-$       & \na{9.91671e-2}    & $-$  \\
1/10   & \na{7.02036e-1}   & 0.80629  &  \na{4.00167e-2}   & 1.30926  \\ 
1/20   & \na{4.49382e-1}   & 0.64360  &  \na{1.79944e-2}   & 1.15305  \\ 
1/40   & \na{2.92701e-1}   & 0.61851  &  \na{1.06223e-2}   & 0.76045 \\
1/80   & \na{1.64321e-1}   & 0.83291  &  \na{3.91005e-3}   & 1.44185 \\ 
\bottomrule
\end{tabular}
\end{table}

\subsection{Numerical Illustrations}\label{sec:illustration}

\begin{figure}
\centering
\begin{subfigure}[t]{.3\linewidth}\centering
\includegraphics[width=0.911\linewidth]{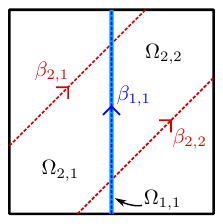}
\caption{Illustration 1} \label{fig:illust1}
\end{subfigure}
\begin{subfigure}[t]{.6\linewidth}\centering
\includegraphics[width=0.9\linewidth]{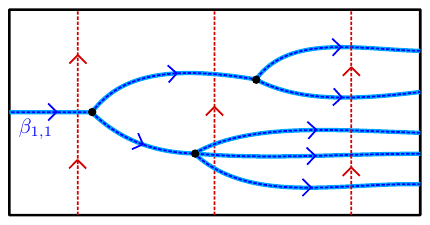}
\caption{Illustration 2}\label{illustration2:setup}
\end{subfigure}
\caption{\emph{Set-ups for numerical illustrations.}
(a) The bulk domains feature a constant diagonal convection field $\beta_{2,1} = \beta_{2,2} = [1, 1]$, while the remaining coefficients are varied in the illustration.
(b) In the bulk domains we have no diffusion ($\epsilon=0$) and constant vector fields $\beta_{2,i}=[0,1]$. In the fractures, we take diffusion parameter $\epsilon=10^{-3}$, vary the inflow fracture vector field $\beta_{1,1}$, and distribute the remaining vector-field magnitudes evenly among the outflow fractures at each junction or bifurcation point.
}
\label{fig:illustrations-setup}
\end{figure}

Finally, we present two numerical illustrations. Some of the illustrated
coefficient configurations are partially degenerate and thus lie outside
the stability and error analysis; they are included only to demonstrate
the behavior of the method. For simplicity, we set the reaction
coefficients $\kappa_{d,i} = 0$ and source functions $f_{d,i} = 0$ in
these examples.

\paragraph{Illustration 1: Transport Across a Single Fracture.} 

We consider the same geometric set-up as in Case I and study how the solution changes with different diffusion parameters and fracture vector fields. In all examples the bulk vector fields cross the fracture, and the mesh size is fixed at $h=0.1$.

First, in Figures~\ref{fig-pc-zero}--\ref{fig-pc-nonzero2}, we consider the purely convective case, i.e. $\alpha_{d,i}=0$. This reproduces the behavior observed in \cite[Section 5, Example 3]{BuHaLaLa19}: if there is no convection in the fracture, then the bulk solution is essentially unaffected by the fracture, see Figure~\ref{fig-pc-zero}. As the fracture vector field increases, see Figures~\ref{fig-pc-nonzero1}--\ref{fig-pc-nonzero2}, the solution is transported progressively along the fracture after crossing the interface.

Next, we introduce diffusion in the fracture with parameter $\epsilon=1/30$, while keeping zero diffusion in the bulk domains. In Figure~\ref{fig:illustr1-zero} we consider the case of zero fracture convection. The fracture then has only a mild influence on the solution, and only a small amount of transport is visible along the fracture. Increasing the fracture vector field in Figures~\ref{fig:illustr1-nonzero1}--\ref{fig:illustr1-nonzero2} produces a more pronounced transport along the fracture.

Finally, Figures~\ref{fig-condiff-zero}--\ref{fig-condiff-nonzero2} show the case where diffusion with parameter $\epsilon=1/30$ is present in both the bulk and fracture domains. This leads to a visibly smoother solution both in the fracture and in the surrounding bulk domains. For smaller diffusion parameters, such as $\epsilon=10^{-3}$, the numerical solutions are qualitatively similar to those in Figures~\ref{fig-pc-zero}--\ref{fig-pc-nonzero2}.

\begin{figure}
\centering
\begin{subfigure}[t]{.32\linewidth}\centering
\includegraphics[width=0.9\linewidth,trim=0 0 55 0,clip]{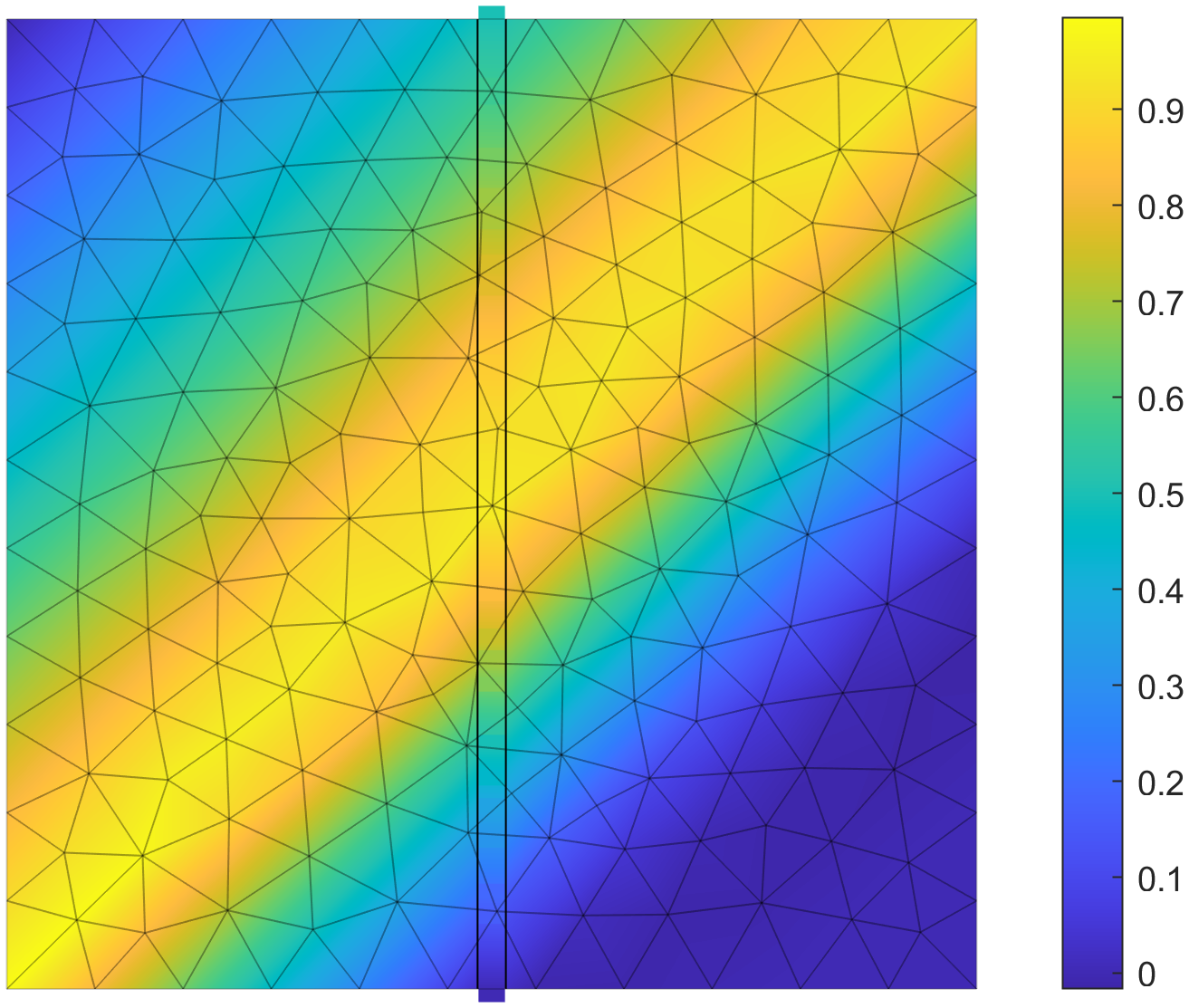}
\caption{$\beta_{1,1} = [0,0]$}\label{fig-pc-zero}
\end{subfigure}
\begin{subfigure}[t]{.32\linewidth}\centering
\includegraphics[width=0.9\linewidth,trim=0 0 55 0,clip]{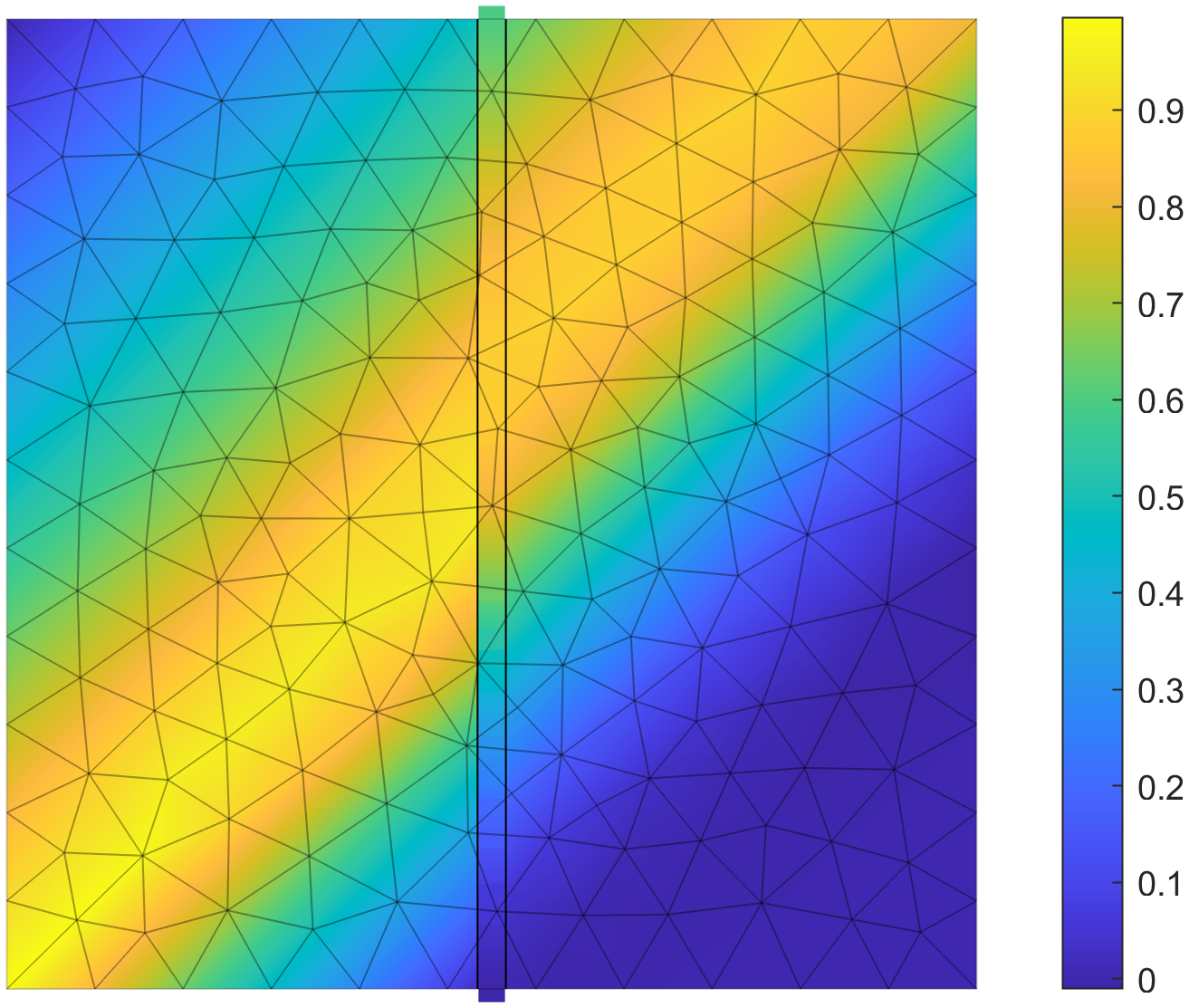}
\caption{$\beta_{1,1} = [0,0.1]$}\label{fig-pc-nonzero1}
\end{subfigure}
\begin{subfigure}[t]{.32\linewidth}\centering
\includegraphics[width=0.9\linewidth,trim=0 0 55 0,clip]{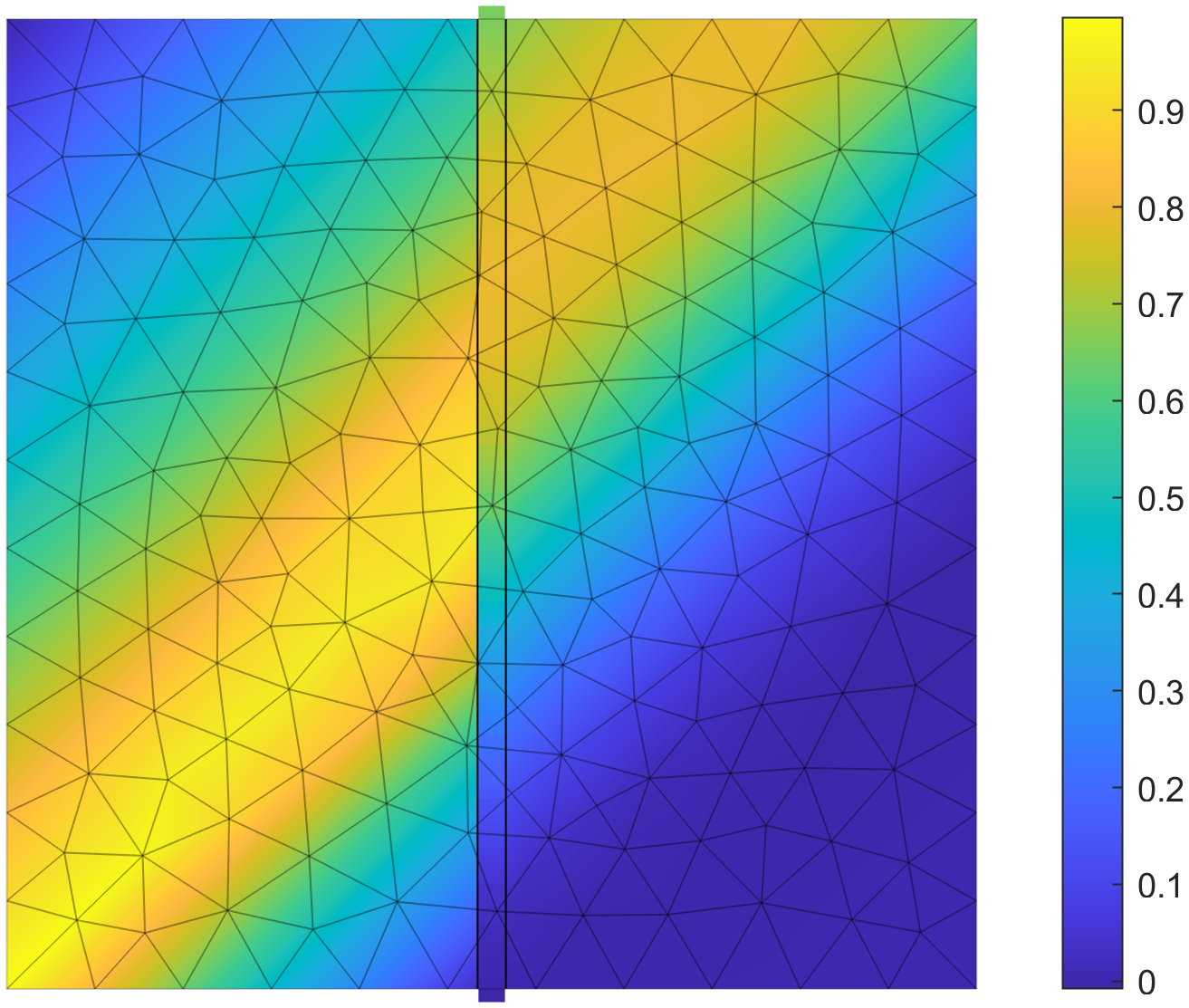}
\caption{$\beta_{1,1} = [0,0.2]$}\label{fig-pc-nonzero2}
\end{subfigure}

\begin{subfigure}[t]{.32\linewidth}\centering
\includegraphics[width=0.9\linewidth,trim=0 0 55 0,clip]{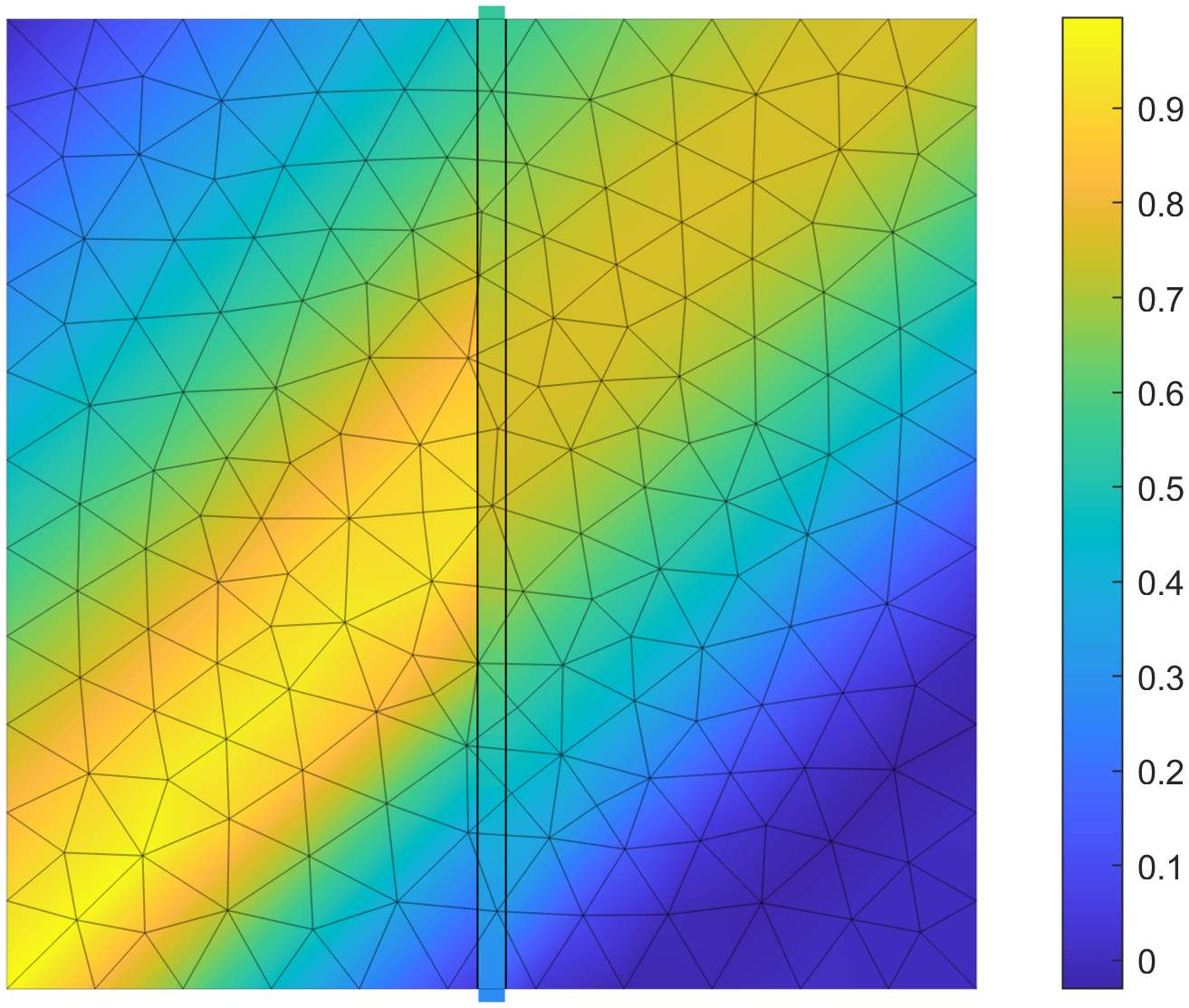}
\caption{$\epsilon_1=\frac{1}{30}$, $\beta_{1,1} = [0,0]$}\label{fig:illustr1-zero}
\end{subfigure}
\begin{subfigure}[t]{.32\linewidth}\centering
\includegraphics[width=0.9\linewidth,trim=0 0 55 0,clip]{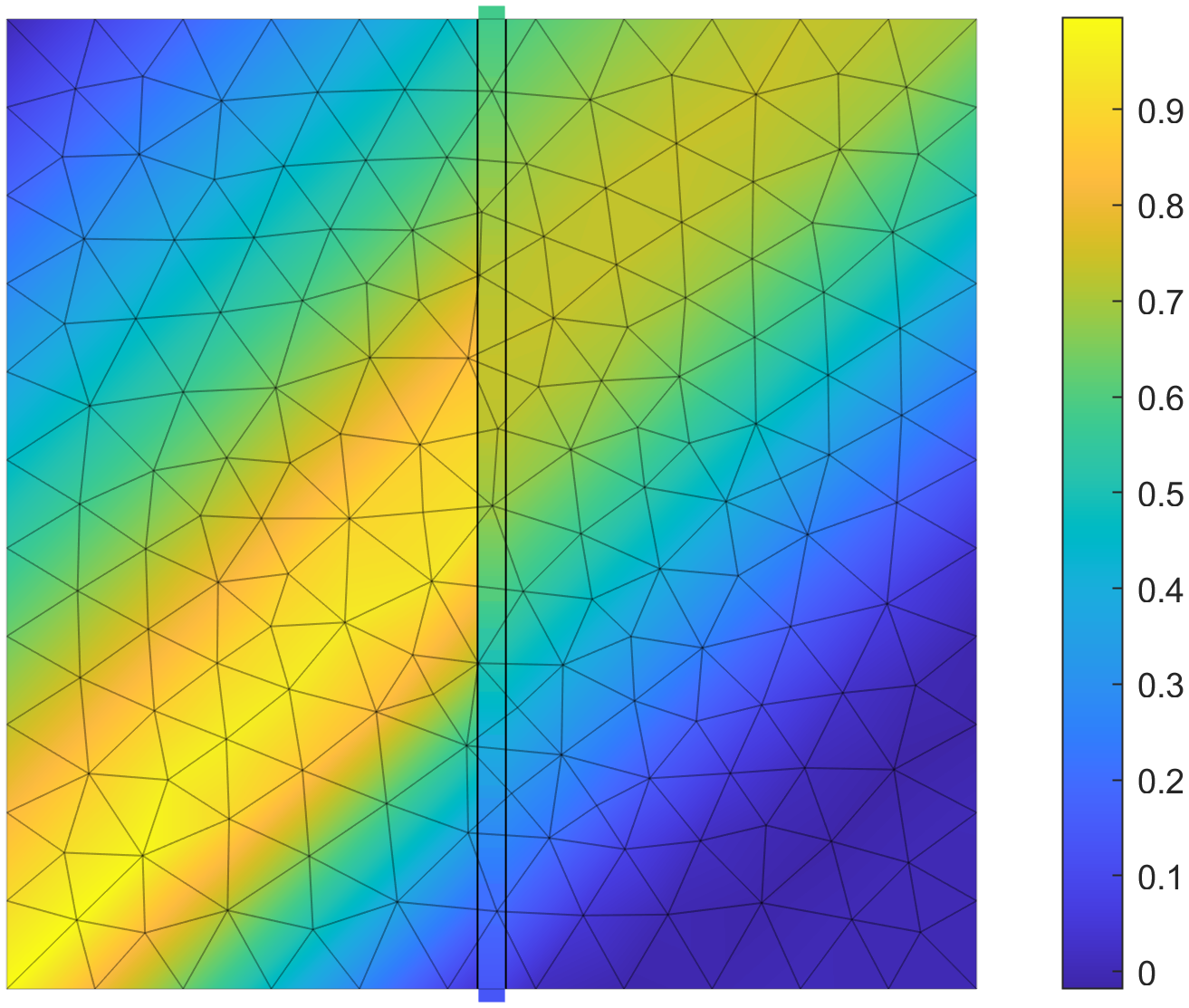}
\caption{$\epsilon_1=\frac{1}{30}$, $\beta_{1,1} = [0,0.1]$}\label{fig:illustr1-nonzero1}
\end{subfigure}
\begin{subfigure}[t]{.32\linewidth}\centering
\includegraphics[width=0.9\linewidth,trim=0 0 55 0,clip]{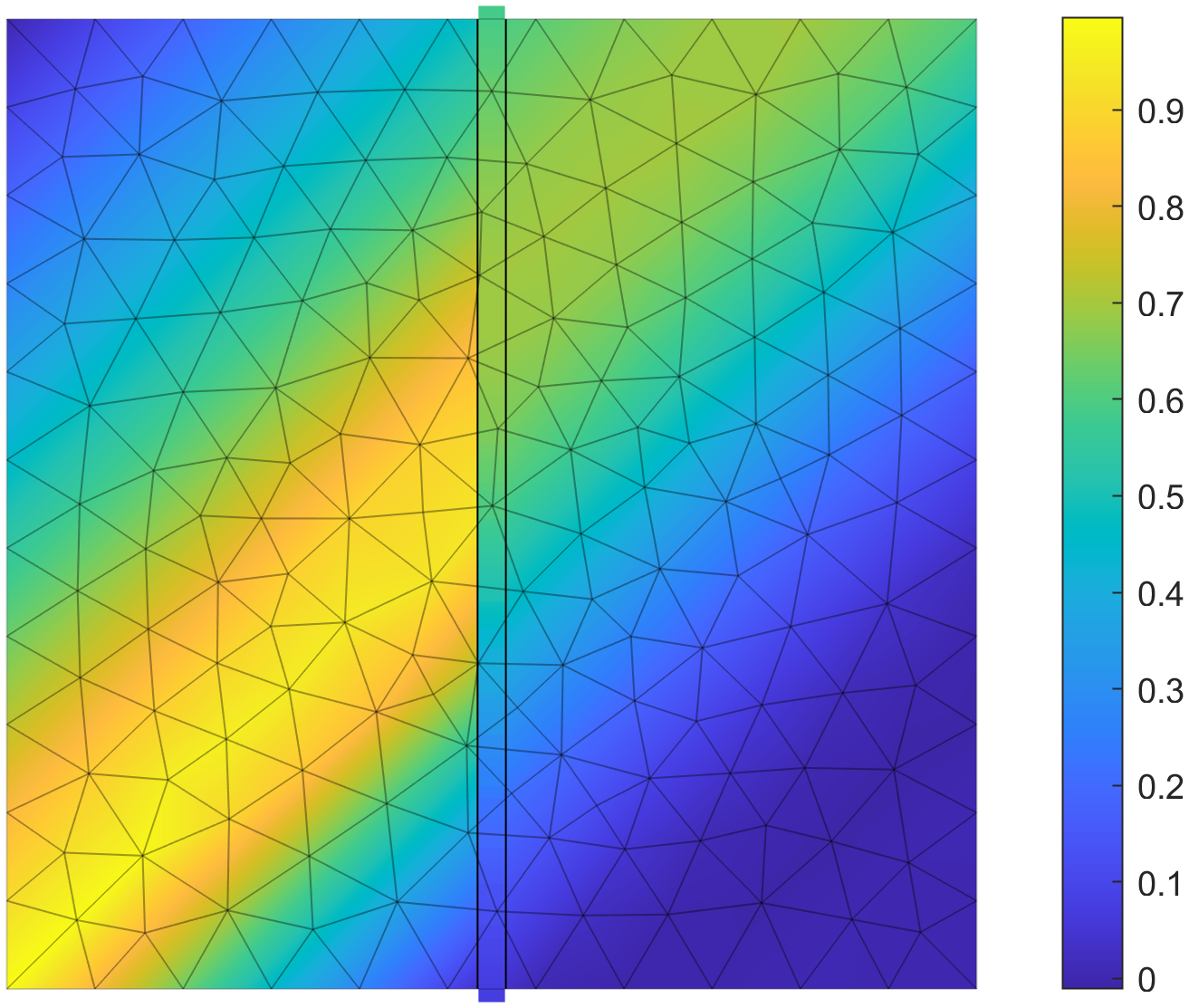}
\caption{$\epsilon_1=\frac{1}{30}$, $\beta_{1,1} = [0,0.2]$}\label{fig:illustr1-nonzero2}
\end{subfigure}

\begin{subfigure}[t]{.32\linewidth}\centering
\includegraphics[width=0.9\linewidth,trim=0 0 55 0,clip]{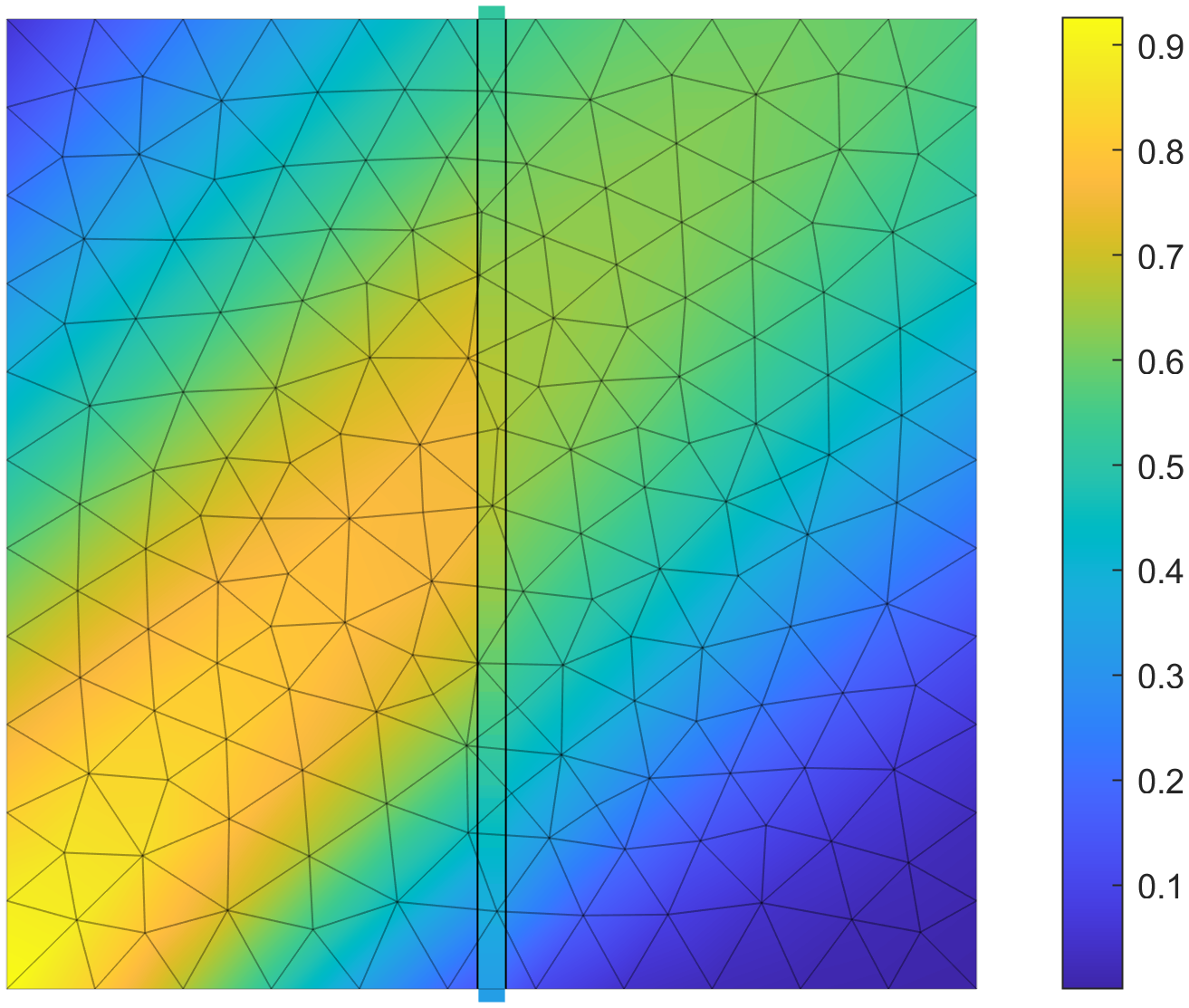}
\caption{$\epsilon_1=\epsilon_2=\frac{1}{30}$, $\beta_{1,1} = [0,0]$}\label{fig-condiff-zero}
\end{subfigure}
\begin{subfigure}[t]{.32\linewidth}\centering
\includegraphics[width=0.9\linewidth,trim=0 0 55 0,clip]{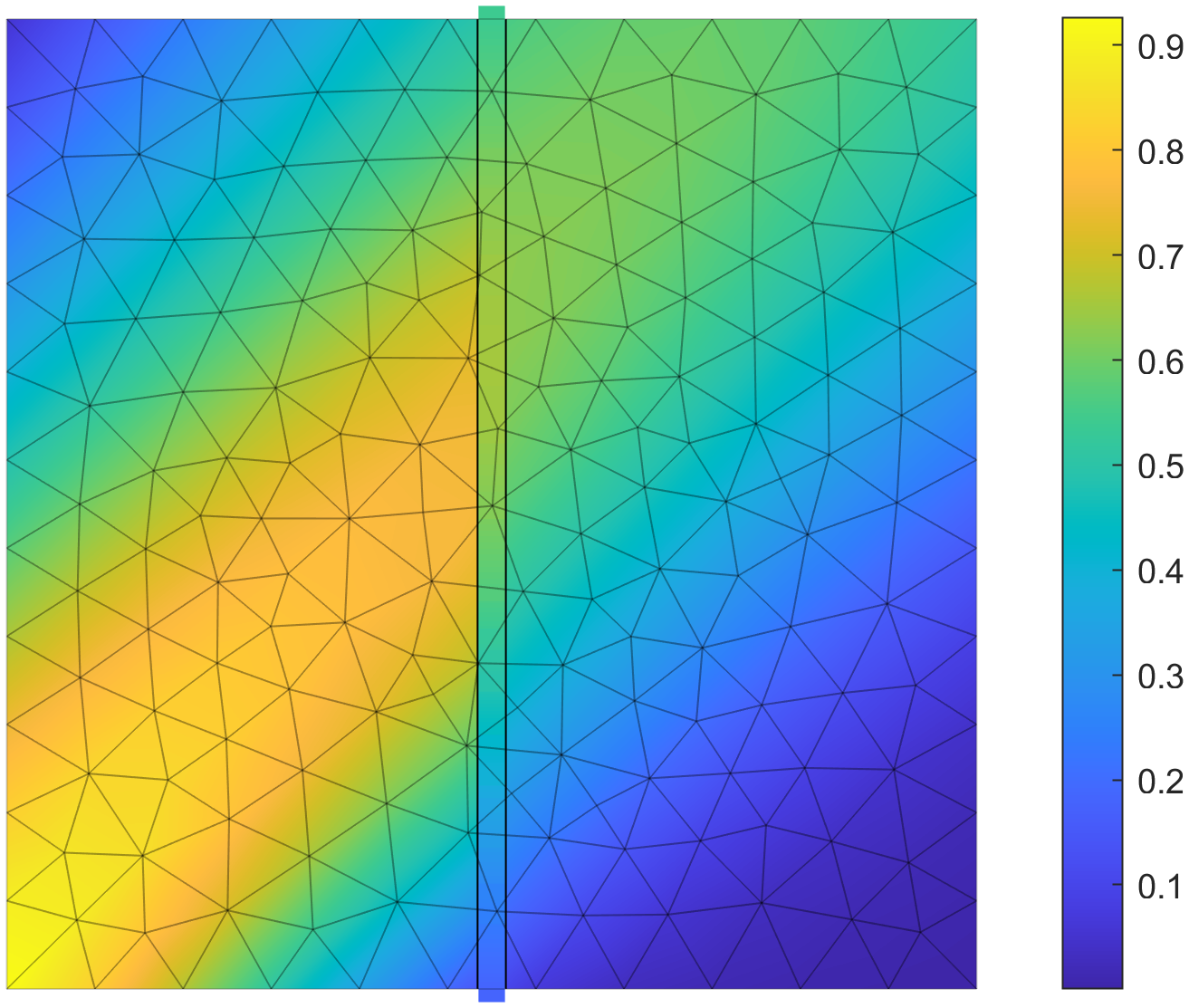}
\caption{$\epsilon_1=\epsilon_2=\frac{1}{30}$, $\beta_{1,1} = [0,0.1]$}\label{fig-condiff-nonzero1}
\end{subfigure}
\begin{subfigure}[t]{.32\linewidth}\centering
\includegraphics[width=0.9\linewidth,trim=0 0 55 0,clip]{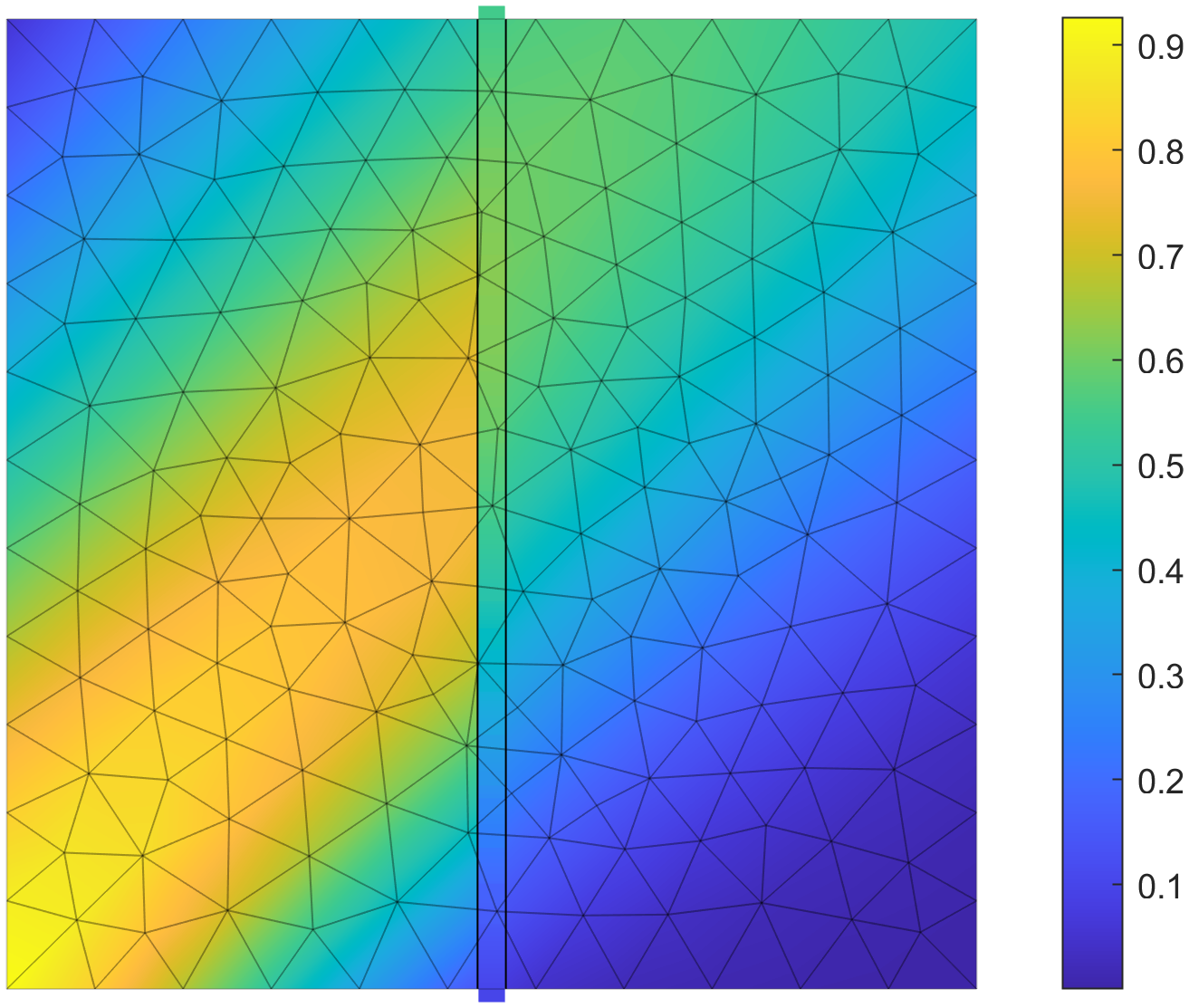}
\caption{$\epsilon_1=\epsilon_2=\frac{1}{30}$, $\beta_{1,1} = [0,0.2]$}\label{fig-condiff-nonzero2}
\end{subfigure}

\caption{\emph{Illustration 1: Transport across a single fracture.} Examples of convection-diffusion on a simple mixed-dimensional domain consisting of two bulk domains and one fracture.
The bulk domains feature a diagonal convection field $\beta_{2,1} = \beta_{2,2} = [1, 1]$ and diffusion $\alpha_{2,i} = \epsilon_2 \bfI_{2\times 2}$.
The fracture domain features a convection field $\beta_{1,1}$ and diffusion $\alpha_{1,1} = \epsilon_1$. Parameter values for the fracture convection field ($\beta_{1,1}$) and the diffusion coefficients in the fracture ($\epsilon_1$) and bulk ($\epsilon_2$) are varied, with nonzero values indicated in the subfigures.
\textbf{(a)--(c)} In the top row, we consider the purely convective case where increasing the convection in the fracture  results in a gradual transport of the solution along the fracture.
\textbf{(d)--(f)} In the middle row, we in addition introduce diffusion in the fracture resulting in a softer appearance of the solution after crossing the fracture.
\textbf{(g)--(i)} In the bottom row, we consider the convection-diffusion case where diffusion is introduced in both the bulk and the fracture domains resulting in further softening of the solution also in the bulk domains.}
\label{fig-pc:illustrations}
\end{figure}

\paragraph{Illustration 2: Transport Across a Curved Fracture Network.}
Here $\Omega=(0,2)\times(0,1)$ and, in contrast to Illustration 1, we consider a geometry with several fractures, some of which are curved, together with bifurcation points and bulk flow crossing the fractures in the vertical direction. The setup is shown in Figure~\ref{illustration2:setup}.

In the bulk domains we take no diffusion, i.e. $\epsilon=0$, choose vector fields $\beta_{2,i}=[0,1]$, and prescribe boundary data $g_{2,i}=1+\sin(15x)$. In the fractures we take diffusion parameter $\epsilon=10^{-3}$ and boundary data $g_{1,i}=1$. Proceeding clockwise, the fracture vector fields after the first, second, and third bifurcation points are chosen to have magnitude one-half, one-fourth, and one-sixth of the vector field on the first fracture.

The corresponding results for different fracture fields are displayed in Figure~\ref{fig:illustrations2}. From Figure~\ref{fig:illustr2-zero} with $\beta_{1,1}=[0,0]$ we observe no transport of the solution while crossing the fractures. As the fracture vector fields increase, see Figures~\ref{fig:illustr2-nonzero1}--\ref{fig:illustr2-nonzero2}, the solution is increasingly affected by the fractures and transported along them while crossing the interfaces.

\begin{figure}
\centering
\begin{subfigure}[t]{.6\linewidth}\centering
\includegraphics[width=0.9\linewidth]{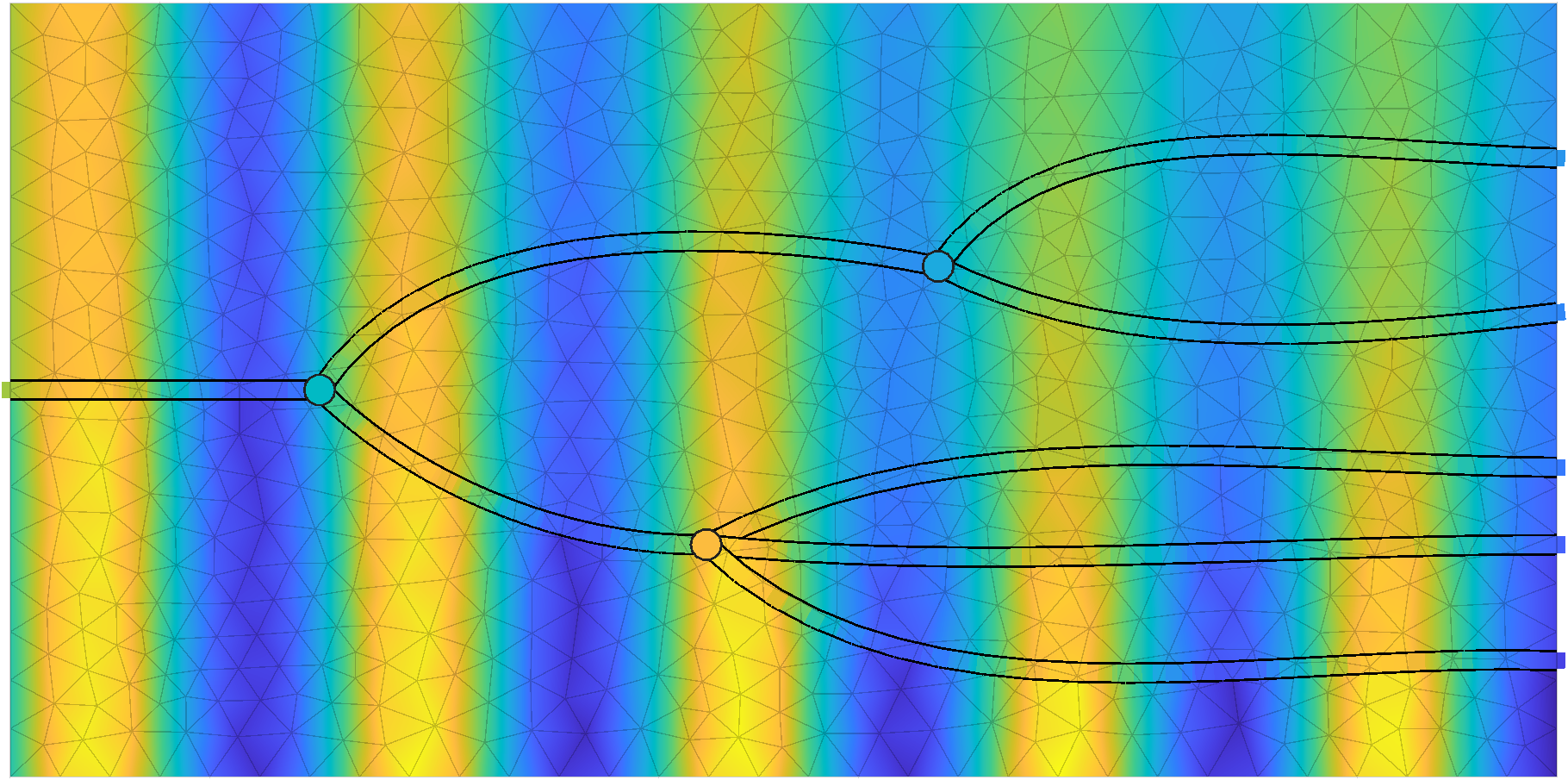}
\caption{$\beta_{1,1} = [0,0]$}\label{fig:illustr2-zero}
\end{subfigure}
\begin{subfigure}[t]{.6\linewidth}\centering
\includegraphics[width=0.9\linewidth]{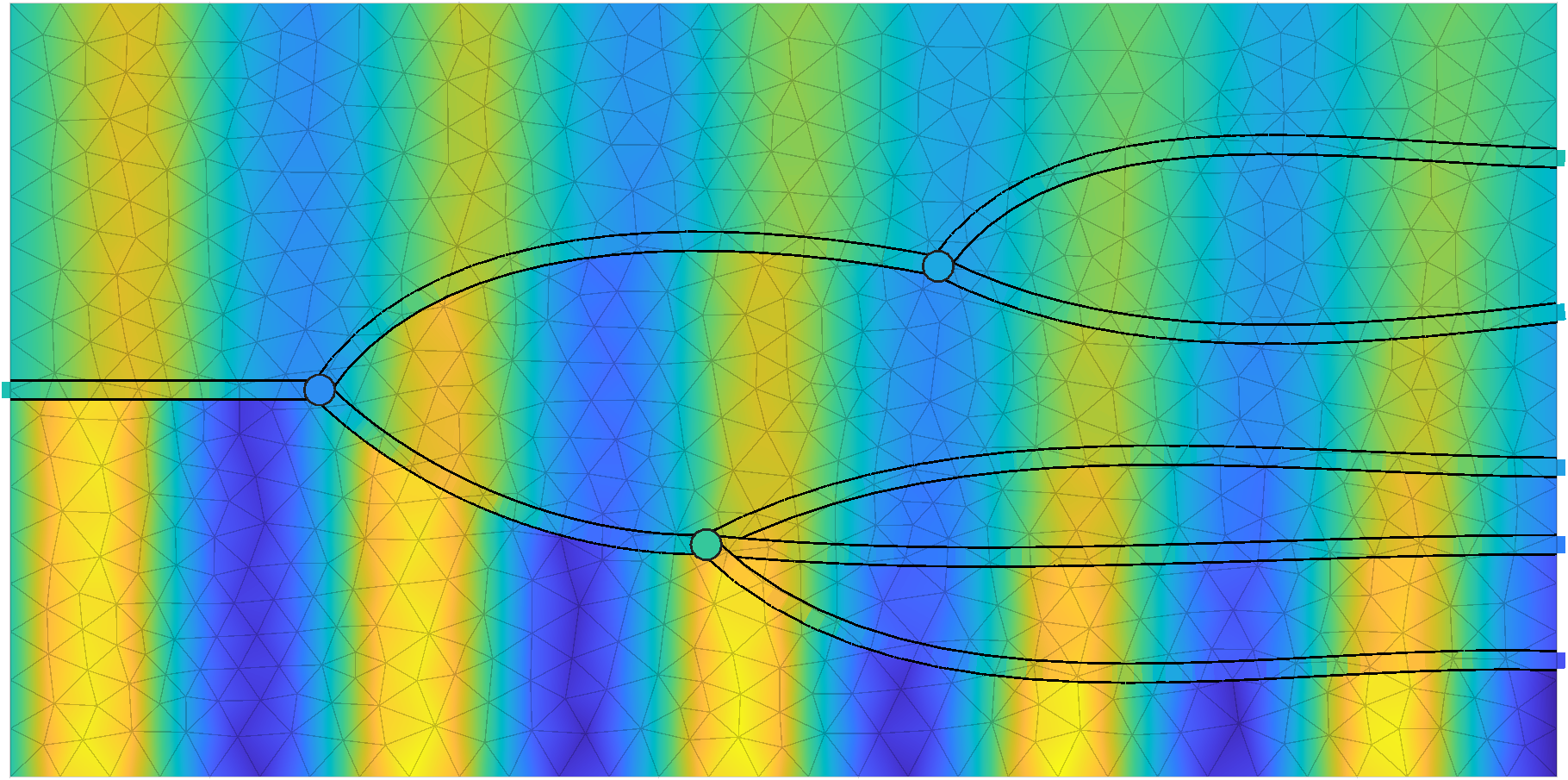}
\caption{$\beta_{1,1} = [0,0.1]$}\label{fig:illustr2-nonzero1}
\end{subfigure}
\begin{subfigure}[t]{.6\linewidth}\centering
\includegraphics[width=0.9\linewidth]{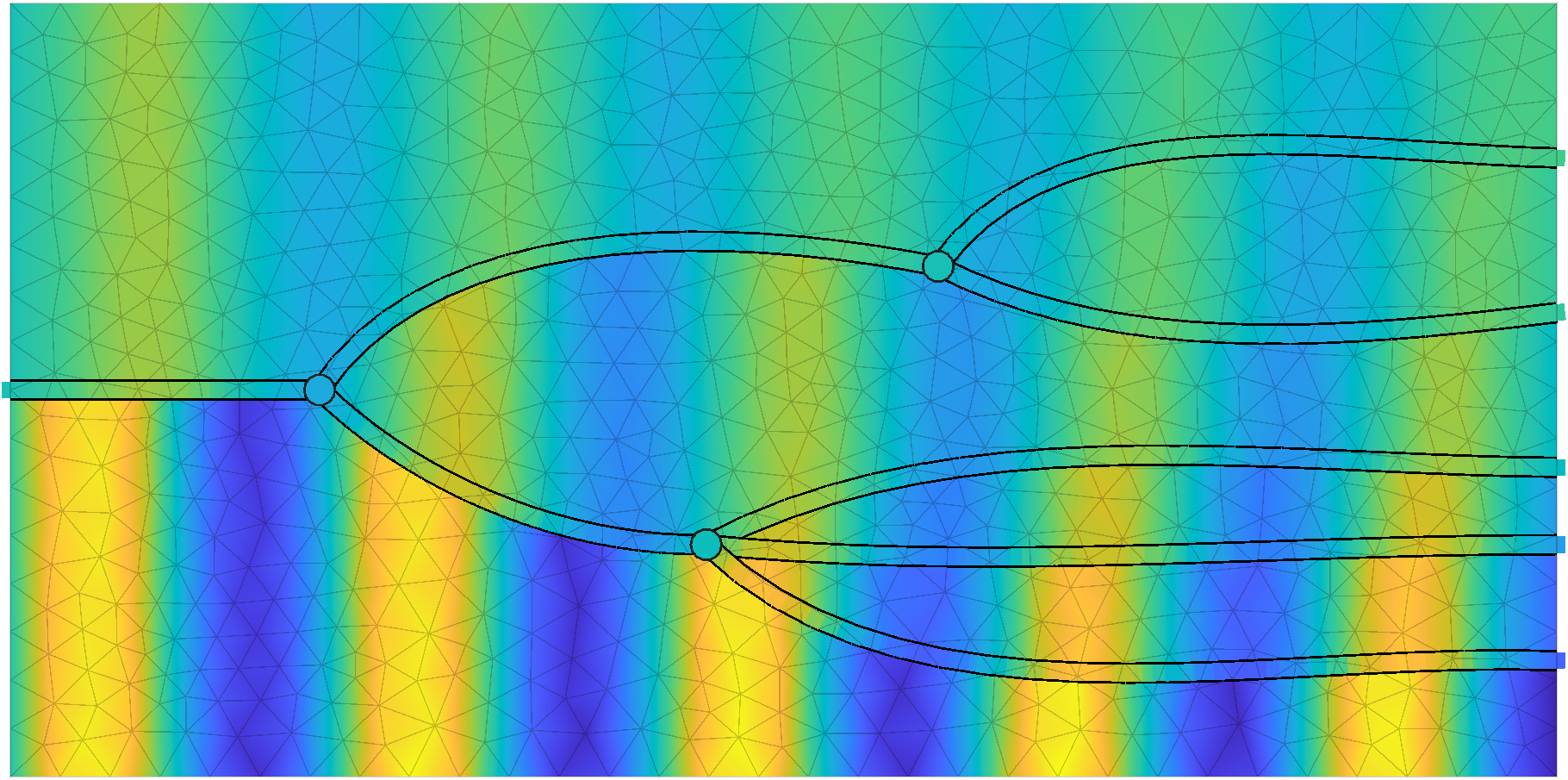}
\caption{$\beta_{1,1} = [0,0.2]$}\label{fig:illustr2-nonzero2}
\end{subfigure}
\caption{\emph{Illustration 2: Transport across a curved fracture network.} Numerical solutions for the geometry shown in Figure~\ref{illustration2:setup}, consisting of several fractures, including curved branches, together with bifurcation points and bulk flow crossing the fractures in the vertical direction. In the bulk domains we take $\alpha_{2,i}=0$ and $\beta_{2,i}=[0,1]$, while in the fractures we take diffusion coefficient $\alpha_{1,i}=10^{-3}$. The fracture vector field on the first fracture is varied as indicated in the subfigures, and the remaining fracture vector fields are scaled according to their position in the network as described in the text. As the fracture convection increases, the solution is transported more strongly along the fracture network while crossing the interfaces.}
\label{fig:illustrations2}
\end{figure}

\section{Conclusions}
In this work we develop and analyze a stabilized CutFEM for convection--diffusion problems posed on mixed-dimensional domains. The main contributions of the paper can be summarized as follows:

\begin{itemize}
\item We introduced a unified operator framework for mixed-dimensional convection--diffusion problems based on abstract directional derivative, divergence, and elliptic operators, together with mixed-dimensional interface operators that encode inter-manifold coupling. This formulation allows the governing equations to be written in a form closely resembling classical convection--diffusion problems on domains in $\mathbb{R}^n$. Interactions between manifolds of different dimensions are represented through the upward trace-sum and downward jump operators, which encode flux exchange in a compact and systematic manner.

\item We proposed a stabilized CutFEM discretization in which the background mesh is independent of the embedded lower-dimensional manifolds. This avoids mesh-fitting and enables straightforward treatment of complex mixed-dimensional geometries. The method combines least-squares stabilization for convection with full-gradient stabilization on cut elements. This yields a stable formulation that performs well in convection-dominated regimes.

\item We established a priori error estimates in the natural energy
norm of the method under a global uniform-diffusion assumption,
including extensions to low-regularity solutions, together with
conditional estimates for globally pure convection. The analysis is
complemented by quantitative numerical experiments, including
partially degenerate configurations outside the precise assumptions of
the theory, that report convergence in both the energy and $L^2$-norms
and illustrate the behavior of the method for different diffusion
regimes and flow configurations.
\end{itemize}

Several directions for future work remain open. On the modeling side, an extension to more general coupling structures, in particular $3$d--$1$d interactions, would be of significant interest. From a computational perspective, the proposed framework enables large-scale simulations on complex three-dimensional mixed-dimensional geometries, where each manifold remains geometrically simple while the overall configuration is intricate. It is also natural to explore multiscale strategies that exploit the inherent hierarchical structure of mixed-dimensional domains within the CutFEM setting. Finally, extensions to time-dependent transport models constitute an important direction for future research.

\bigskip
\paragraph{Acknowledgments.}
This research was supported in part by:
EPSRC grants EP\slash T033126\slash 1 (EB), EP\slash V050400\slash 1 (EB), and EP\slash X042650\slash 1 (EB);
the Swedish Research Council grants 2021-04925 (MGL), 2022-03908 (PH), and 2025-05562 (MGL);
the Swedish Research Programme Essence (MGL, KL);
the Kempe Foundations grant JCSMK22-0139 (KL, SM).

During final preparation of the original and revised manuscripts, the authors used OpenAI's GPT-5.5 and GPT-5.6 Sol models for language editing and to help identify ambiguities and possible corrections in the mathematical arguments. AI was not used to draft the manuscript or to develop, run, or evaluate the numerical experiments. All suggestions were independently verified by the authors, who take full responsibility for the final content.

\bibliographystyle{habbrv}
{
\footnotesize
\bibliography{layer_biblio}
}

\bigskip
\bigskip
{%
\footnotesize
\noindent
{\bf Authors' addresses:}

\smallskip
\noindent
Erik Burman,  \quad \hfill \addressuclshort\\
{\tt e.burman@ucl.ac.uk}

\smallskip
\noindent
Peter Hansbo,  \quad \hfill \addressjushort\\
{\tt peter.hansbo@ju.se}

\smallskip
\noindent
Mats G. Larson,  \quad \hfill \addressumushort\\
{\tt mats.larson@umu.se}

\smallskip
\noindent
Karl Larsson, \quad \hfill \addressumushort\\
{\tt karl.larsson@umu.se}

\smallskip
\noindent
Shantiram Mahata,  \quad \hfill Mathematics and Physics, Linnaeus~University, Sweden\\
{\tt shantiram.mahata@lnu.se}

}%

\end{document}